\documentclass[11pt]{amsart}

\usepackage{xspace,amssymb,amsmath,amscd,amsthm,epsfig,fullpage,booktabs,color}
\usepackage{xcolor}
\definecolor{myurlcolor}{RGB}{0,0,139} 
\definecolor{mycitecolor}{HTML}{006600} 
\usepackage{hyperref} 

\definecolor{myurlcolor}{HTML}{a6cee3}

\definecolor{greyblue}{rgb}{0.357,0.447,0.71}
\definecolor{mylinkcolor}{rgb}{0.71,0.447,0.357}
\definecolor{mycitecolor}{rgb}{0.357,0.447,0.71}

\hypersetup{
    colorlinks=true,
    linkcolor=mylinkcolor,  
    filecolor=magenta,   
    urlcolor=myurlcolor, 
    citecolor=mycitecolor 
}

\newcommand{\bx}{\boldsymbol{x}}
\newcommand{\compl}[1]{\overline{#1}}

\newcommand{\N}{\text{N}}
\newcommand{\E}{\text{E}}
\DeclareMathOperator{\neigh}{N}
\DeclareMathOperator{\Cat}{Cat}

\DeclareMathOperator{\lvl}{lvl}

\newcommand{\ones}{\mathbf{1}}
\newcommand{\ee}{\mathbf{e}}
\newcommand{\vv}{\mathbf{v}}
\newcommand{\ww}{\mathbf{w}}
\newcommand{\Ga}{\Gamma}

\newcommand{\degch}[1]{\left|#1\right|}
\newcommand{\mvl}[1]{\tilde{#1}}

\makeatletter
\newfont{\footsc}{cmcsc10 at 8truept}
\newfont{\footbf}{cmbx10 at 8truept}
\newfont{\footrm}{cmr10 at 10truept}
\renewcommand{\ps@plain}{%
\renewcommand{\@oddfoot}{\footsc 
  \hfil\footrm\thepage}}
\makeatother
\theoremstyle{plain}
\newtheorem{theorem}{Theorem}
\newtheorem{lemma}[theorem]{Lemma}
\newtheorem{prop}[theorem]{Proposition}

\theoremstyle{definition}
\newtheorem{remark}[theorem]{Remark}

\newtheorem{example}[theorem]{Example}

\newcommand{\mcG}{\mathcal{G}}
\newcommand{\mcLP}{\mathcal{LP}}
\newcommand{\mcP}{\mathcal{P}}

\newcommand{\lpath}{\textsc{lpath}}
\newcommand{\upath}{\textsc{path}}
\DeclareMathOperator{\pos}{pos}
\DeclareMathOperator{\run}{run}

\newcommand{\Z}{\mathbb{Z}}
\newcommand{\R}{\mathbb{R}}

\newcommand{\ch}{D}

\keywords{chip firing, skeletal objects, lattice paths, Dyck paths,
  parking functions, Catalan numbers}

\subjclass{05A15, 05A19, 05C57}

\begin{document} 

\title{Quantized rational chip-firing}

\thanks{This work was supported by Gifts from the Simons Foundation/SFARI
(\#854037 to Backman; \#633564 to Loehr; \#429570 to Warrington) and NSF Grant DMS-2246967 (Backman).}

\author[S. Backman et al.]{Spencer Backman}
\address{Dept. of Mathematics and Statistics, University of Vermont, Burlington, VT 05401}
\email{spencer.backman@uvm.edu}

\author[]{Nicholas A. Loehr}
\address{Dept. of Mathematics, Virginia Tech, Blacksburg, VA 24061 }
\email{nloehr@vt.edu}

\author[]{Gregory S. Warrington}
\address{Dept. of Mathematics and Statistics, University of Vermont, Burlington, VT 05401}
\email{gregory.warrington@uvm.edu}

\begin{abstract}
 This article introduces a quantized chip-firing model with close
 connections to the theory of rational lattice paths and rational
 parking functions. Given a graph with a sink and positive integers 
 $a,b,c$ with $\gcd(a,b)=1$, a set $S$ of vertices fires by the following rule.
 Each vertex in $S$ provisionally sends $c$ chips to the sink and
 $a/b$ chips to each non-sink neighbor outside of $S$.  The novel
 feature is that the total number of chips leaving from or arriving at
 any vertex gets rounded down to the nearest integer before being
 finalized. We define the notions of chip configurations being
 superstable, $k$-stable, or $k$-skeletal in this model.  When $c=1$
 and the graph is complete, superstable configurations correspond to
 rational parking functions. There is a bijection between superstable
 configurations and $k$-skeletal configurations for each $k$. We
 establish these results by building a combinatorial theory of
 $k$-skeletal rational lattice paths (both unlabeled and labeled) and
 translating that theory to chip configurations.  There is a group
 structure on the set of chip configurations modulo firing and
 borrowing moves. We show that this group is isomorphic to the product
 of $b-1$ copies of the integers modulo $a$; and, for each $k$, each
 coset of chip configurations in this group contains a unique
 $k$-skeletal representative.
\end{abstract}

\maketitle

\section{Introduction}
\label{sec:intro}

The goal of this article is to introduce a new chip-firing model,
called \emph{quantized rational chip-firing}, that makes close contact
with the combinatorics of labeled and unlabeled rational-slope lattice
paths. In generalizing the classic model, we extend chip-firing to
graphs with rational edge weights while extending a number of
important properties such as duality between superstable and recurrent
configurations (\S\ref{subsec:duality};
cf.~\cite{benton2024zsuperstable,cho-siam}), enumerative results
relating to the Catalan numbers and parking functions (Facts~(F3)
and~(F4) in~\S\ref{subsec:rat-chip-model1}), and the existence of an
associated critical group (\S\ref{sec:group}).\footnote{While we focus on the
case of a uniform, rational edge weight between all non-sink
vertices, our framework easily handles the case of arbitrary, real
edge weights; see Remark~\ref{rem:real}.}

In this introduction, we first present a standard integral chip-firing
model in \S\ref{subsec:int-chip-model} before outlining a simplified
version of our quantized rational chip-firing model in
\S\ref{subsec:rat-chip-model1}. Sections~\ref{subsec:kskel-intro} and
~\ref{subsec:kskel-path-intro} then briefly present versions of the
$k$-skeletal objects from ~\cite{ratchip1}
(cf.~\cite{Backman-bij,caracciolo2012multiple}) that are associated
with our new model. In these sections we also summarize our results
relating chip configurations to rational lattice paths.  In
\S\ref{subsec:group-intro} we describe a group structure arising from
our quantized chip-firing model.  Finally, in \S\ref{subsec:related}
we provide more context for these results by comparing them to some of
the relevant literature.

\subsection{An Integral Chip-Firing Model}
\label{subsec:int-chip-model}

This chip-firing model (see~\cite{bak,Dhar,klivans}) starts with
positive integer parameters $m,n,c$.  Let $[n]=\{1,2,\ldots,n\}$. Let
$G$ be the complete graph on vertex set $\{0\}\cup [n]$, where $0$ is
a special vertex called the \emph{sink}.  A \emph{chip configuration}
on $G$ is a function $\ch:[n]\rightarrow\Z$ where $\ch(i)$ represents
the number of chips at vertex $i$ ($\ch$ for ``divisor'').  We
may write $\ch$ as a list $(\ch(1),\ch(2),\ldots,\ch(n))$ or as a word
$\ch(1)\ch(2)\cdots\ch(n)$.  We say $\ch$ is \emph{nonnegative},
written $\ch\geq 0$, if $\ch(i)\geq 0$ for all $i\in [n]$.  The chip
count at the sink is not recorded in $\ch$.

We think of $c$ as the capacity of each edge between the sink $0$
to any other vertex in $G$, while $m$ is the capacity of each edge
linking two non-sink vertices. A single vertex $i\in [n]$ \emph{fires}
by sending $c$ chips to the sink and $m$ chips to each other vertex in $[n]$.
More generally, for any $s$-element subset $S$ of $[n]$, 
the \emph{cluster-firing move}
$\phi_S$ acts on any configuration $\ch$ as follows.
Write $\compl{S}=[n]\setminus S=\{j\in [n]: j\not\in S\}$.
For each $i\in S$, replace $\ch(i)$ by $\ch(i)-(c+m(n-s))$.
For each $j\in\compl{S}$, replace $\ch(j)$ by $\ch(j)+ms$.
Informally, each $i\in S$ sends $c$ chips to the sink and $m$ chips
to each $j\in\compl{S}$. For $\ch\geq 0$, we say $S$ \emph{can legally fire}
in configuration $\ch$ if $\phi_S(\ch)\geq 0$, which means
$\ch(i)\geq c+m(n-s)$ for all $i\in S$. Throughout this paper,
the unqualified phrase ``can fire'' always abbreviates ``can legally fire.''
For $S=\varnothing$, $\phi_{\varnothing}$
is the identity map. A \emph{$k$-firing move} is a cluster-fire move
$\phi_S$ where $0<|S|\leq k+1$.

A nonnegative chip configuration $\ch$ is \emph{stable} if no single vertex
can fire, while $\ch$ is \emph{superstable} if no nonempty subset can fire.
More generally, for each $k$ in the range $0\leq k<n$, $\ch$ is called
\emph{$k$-stable} if no $k$-firing move is legal for $\ch$.
Any $\ch\geq 0$ can be converted to a $k$-stable configuration
by a finite sequence of legal $k$-firing moves.
In this integral case, it can be shown that the resulting $k$-stable
configuration is uniquely determined by $\ch$; it is called
\emph{the $k$-stabilization} of $\ch$.  The proof relies on
a version of the Diamond Property~\cite[Thm. 2.2.2(1)]{klivans} or
Newman's Lemma~\cite{newman}; also see
Remark~\ref{rem:int-kstab-uniq}.
It is also true that superstable configurations correspond naturally to 
certain labeled lattice paths that proceed by unit-length north steps
and east steps from $(0,0)$ to $(mn+c-1,n)$ while staying weakly inside
the triangle or trapezoid with vertices $(0,0)$, $(0,n)$, $(mn+c-1,n)$, 
and $(c-1,0)$.
The symmetric group $S_n$ acts on chip configurations by permuting
the non-sink vertices. The number of $S_n$-orbits of superstable
chip configurations is the number of unlabeled lattice paths contained
in this shape. When $c=1$, this number is the Fuss--Catalan number
$\binom{1}{mn+1}\binom{mn+n}{n}$.

\subsection{Quantized Rational Chip-Firing Model for Complete Graphs}
\label{subsec:rat-chip-model1}

Our quantized rational chip-firing model can be defined for an
arbitrary graph $G$ and integer parameters $a,b,c\in
\mathbb{Z}_{\geq 0}$. This general version is presented in
\S\ref{sec:rat-chip-model}. In this section, we 
restrict ourselves to the main focus of this article, which is
this important special case: $a$ and $b$ are relatively prime, $c=1$, and
$G$ is the complete graph with $b+1$ vertices. For the vertex set of
$G$, we take the set $\{0\}\cup [b]$, where vertex $0$ is the sink.
A \emph{chip configuration} on $G$
is a function $\ch:[b]\rightarrow\Z$; we define $\ch\geq 0$ as before.

Let $S$ be any $s$-element subset of $[b]$.
The \emph{cluster-fire move} $\phi_S$ acts on a configuration $\ch$
as follows. For all $i\in S$, subtract $1+\lfloor (b-s)a/b\rfloor$
from $\ch(i)$. For all $j\in\compl{S}=[b]\setminus S$, 
add $\lfloor sa/b\rfloor$ to $\ch(j)$.
We can understand this move intuitively as a two-step process.
First, each vertex in $S$ sends 1 chip to the sink and 
$a/b$ chips to every other vertex not in $S$. Second, the total
chip gain or chip loss at each vertex (always viewed as a positive number)
is rounded \emph{down} to the nearest integer. 

We say $S$ \emph{can (legally) fire} in configuration
$\ch\geq 0$ if $\phi_S(\ch)\geq 0$. Because of the rounding rules,
cluster-firing $S$ is not equivalent (in general) to sequentially firing
each individual vertex in $S$. As before, a \emph{$k$-firing move}
is a cluster-fire move $\phi_S$ where $0<|S|\leq k+1$.
Stable, superstable, and $k$-stable configurations are defined as
in the previous model. 

\begin{remark}\label{rem:basic-bound}
For an individual vertex to fire legally in this model, 
the chip count at that vertex must be at least 
$1+\lfloor(b-1)a/b\rfloor=1+a+\lfloor-a/b\rfloor$.
This quantity never exceeds $a$. Therefore, a necessary condition
for a configuration $\ch$ to be $k$-stable is that
$0\leq \ch(i)<a$ for all $i\in [b]$. Lemma~\ref{lem:test-stable}
furnishes a necessary and sufficient condition for $k$-stability.
\end{remark}

Let $\ch$ be a nonnegative chip configuration for the chip-firing
model described in this subsection, and let $0\leq k<b$. We collect
here a number of facts about quantized rational chip-firing in the
context of the complete graph. The last two facts
establish enumerative connections between this chip-firing model and
labeled and unlabeled \emph{rational-slope lattice paths} that are
contained in the triangle with vertices $(0,0)$, $(0,b)$, and
$(a,b)$. These connections were our primary motivation for introducing
this new chip-firing model.

\begin{enumerate}\label{thm:ratchip-facts}
\item[(F1)] There exists a $k$-stable configuration reachable from $\ch$ by
  a finite sequence of legal $k$-firing moves. We call any such
  configuration a \emph{$k$-stabilization} of
  $\ch$. (See Proposition~\ref{prop:exist-kstab}.)\label{thm:ratchip-facts:stab}
\item[(F2)] There is a unique $k$-stabilization of $\ch$ for $k=0$ and for
  $k=b-1$, but uniqueness is not guaranteed for other values of
  $k$. (See Theorem~\ref{thm:unique0} and
  Proposition~\ref{prop:ss-uniq}. Note that the uniqueness for $k=b-1$ is
  only true for the complete graph.)\label{thm:ratchip-facts:uniq}
\item[(F3)] There are $a^{b-1}$ superstable configurations. (See
  Theorem~\ref{thm:kskel-chip}\ref{thm:kskel-chip:ab-1}.)\label{thm:ratchip-facts:ab-1}
\item[(F4)] Letting $S_b$ act on chip configurations by permuting the
  non-sink vertices, there are $\frac{1}{a+b}\binom{a+b}{a,b}$ orbits
  of superstable configurations. (See
  Theorem~\ref{thm:kskel-chip}\ref{thm:kskel-chip:cat}.)\label{thm:ratchip-facts:cat}
\end{enumerate}

Section~\ref{sec:rat-chip-model} presents our general quantized
rational chip-firing model where $G$ need not be complete, $a$ and $b$
need not be coprime, and sink edges need not have capacity $1$. In
that setting, we prove $k$-stabilizations exist for all $k$ but are
only guaranteed to be unique for $k=0$.

\begin{example}
 Take initial parameters $a=5$ and $b=3$. 
 Firing one vertex means that vertex loses $1+\lfloor (3-1)5/3\rfloor
    =4$ chips and the other two non-sink
    vertices gain $\lfloor 1\cdot 5/3\rfloor = 1$ chip each.
 Firing a set of two vertices means those vertices lose 
    $1+\lfloor (3-2)5/3\rfloor =2$
     chips each and the other non-sink vertex gains 
    $\lfloor 2\cdot 5/3\rfloor = 3$ chips.
  Firing all three vertices means all non-sink vertices lose $1+\lfloor
    (3-3)5/3\rfloor = 1$ chip. 

We compute a $k$-stabilization of $\ch=555$ for $k=0,1,2$.
The vertices that cluster-fire to reach the next configuration
are indicated by underlining.

\begin{itemize}
\item $0$-stabilization:
 $55\underline{5} \rightarrow \underline{6}61 \rightarrow 2\underline{7}2 
\rightarrow 333$.
\item $1$-stabilization:
 $55\underline{5} \rightarrow \cdots \rightarrow \underline{33}3 
\rightarrow 11\underline{6} \rightarrow \underline{22}2 
\rightarrow 00\underline{5} \rightarrow 111$.
\item $2$-stabilization:
 $55\underline{5} \rightarrow \cdots \rightarrow \underline{111} 
 \rightarrow 000.$
\end{itemize}
\end{example}

\begin{example}\label{ex:ex75a}
To see the potential non-uniqueness of $k$-stabilizations, take $a=7$,
$b=5$, and $k=2$. For some nonempty $k$-firing move to be legal on
$\ch$, there must be one vertex in $\ch$ with at least $6$ chips, or
two vertices with at least $5$ chips each, or three vertices with at
least $3$ chips each.  Starting with $\ch=00355$, we find that
$\phi_{\{4,5\}}(\ch)=22500$ and $\phi_{\{3,4,5\}}(\ch)=44022$ are two
different $2$-stabilizations of $\ch$. 
One may check that there are no other $2$-stabilizations of $\ch$.
\end{example}

\subsection{\texorpdfstring{$k$}{k}-Skeletal Chip Configurations}
\label{subsec:kskel-intro}

To remedy the non-uniqueness of $k$-stabilizations, we introduce
the related concept of $k$-skeletal configurations.
This concept was first studied in~\cite{ratchip1} for the 
model in~\S\ref{subsec:int-chip-model} and its generalization to
the case where chip counts take values in an additive subgroup of $\R$;
see the beginning of~\S\ref{subsec:related} for more discussion.

We refer the reader to~\cite{ratchip1} for additional chip-firing
motivations for studying $k$-skeletal objects; some of their
enumerative properties will become apparent in this article. It should
also be noted that our work from~\cite{ratchip1} builds on that
of~\cite{Backman-bij} and~\cite{caracciolo2012multiple}.

We adapt some definitions from~\cite{ratchip1} to our new chip-firing model.
For this, we must first recall the concept of borrow moves.
In either model discussed so far, let $T$ be any $t$-element 
set of non-sink vertices in $G$. The \emph{borrow move} $\beta_T$
is defined as the inverse of the cluster-fire move $\phi_{T}$,
thought of as a function acting on chip configurations.
In more detail, for the model in~\S\ref{subsec:int-chip-model},
$\beta_T$ acts on $\ch$ by increasing $\ch(j)$ by $c+(n-t)m$
for all $j\in T$ and decreasing $\ch(i)$ by $tm$ for all $i\in\compl{T}$.
For the model in~\S\ref{subsec:rat-chip-model1},
$\beta_T$ acts on $\ch$ by increasing $\ch(j)$ by $c+\lfloor (n-t)a/b\rfloor$
for all $j\in T$ and decreasing $\ch(i)$ by $\lfloor ta/b\rfloor$
for all $i\in\compl{T}$. A borrow move is \emph{legal} for a 
configuration $\ch\geq 0$ if $\beta_T(\ch)\geq 0$.

Say $G$ has $n$ non-sink vertices (so $n=b$ in the rational model of
\S\ref{sec:rat-chip-model}).
For $0\leq k<n$, we say a nonnegative chip configuration $\ch$ is
\emph{$k$-skeletal} if $\ch$ is $k$-stable and, for any legal borrow
move $\beta_T$ with $T\neq\varnothing$, $\beta_T(\ch)$ is not
$k$-stable. Since a legal borrow move $\beta_T$ with $|T| \leq k+1$
could be immediately followed by the legal $k$-firing $\beta_T^{-1}$,
it suffices to check legal borrow moves $\beta_T$ with $|T|>k+1$.
The $(n-1)$-skeletal configurations are the same as the
superstable configurations.  At the other extreme, a $0$-skeletal
configuration $\ch$ is a special kind of stable configuration: no
individual vertex can fire in $\ch$; but after any nonempty set of
vertices legally borrows in $\ch$, the new configuration always has
some vertex that can fire by itself.

\begin{example}\label{ex:ex75b}
We revisit Example~\ref{ex:ex75a}, where $a=7$ and $b=5$.
The $2$-stable configuration $22500$ is not $2$-skeletal,
because $\beta_{\{1,2,4,5\}}(22500)=44022$ is $2$-stable.
The $2$-stable configuration $44022$ is $2$-skeletal.
To check this, note there is no $T$ of size $4$ such that
$\beta_T(44022)\geq 0$. Taking $T=[5]$, we find
$\beta_T(44022)=55133$, which is not $2$-stable since 
$\phi_{\{1,2\}}$ is now legal. 
Thus, $44022$ is the unique
$2$-skeletal configuration that is a $2$-stabilization of $00355$.
\end{example}

In~\S\ref{sec:compare-skel} and~\S\ref{sec:group},
we prove the following facts about $k$-skeletal configurations.

\begin{theorem}\label{thm:kskel-chip}
Assume the chip-firing model of~\S\ref{subsec:rat-chip-model1}.
Fix $k$ with $0\leq k<b$.
\begin{enumerate}
\item For any configuration $\ch$, there exists a unique
$k$-skeletal chip configuration $\ch'$ reachable from $\ch$ via a finite
sequence of (not necessarily legal) cluster-firing and borrowing moves.
If $\ch\geq 0$, then we can convert $\ch$ to $\ch'$ using all legal moves.\label{thm:kskel-chip:unique}
\item There is a bijection from superstable configurations
 $\ch$ to $0$-skeletal configurations $\ch'$ given by
 $\ch'(i)=a-1-\lfloor a/b\rfloor-\ch(i)$ for $i\in [b]$.\label{thm:kskel-chip:duality}
\item There are $a^{b-1}$ $k$-skeletal chip configurations.\label{thm:kskel-chip:ab-1}
\item When $S_b$ acts on chip configurations, there are
 $\frac{1}{a+b}\binom{a+b}{a,b}$ orbits of $k$-skeletal chip configurations.\label{thm:kskel-chip:cat}
\end{enumerate}
\end{theorem}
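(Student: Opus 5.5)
The plan is to translate chip configurations into labeled rational lattice paths in the triangle with vertices $(0,0)$, $(0,b)$, $(a,b)$. The proof then rests on a combinatorial theory of $k$-skeletal paths together with a group-theoretic uniqueness argument.

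\textbf{Step 1: the firing lattice and the group.} The moves $\phi_S$ are bijections on $\Z^b$, and $\beta_S=\phi_S^{-1}$. I will first show that the effect of $\phi_S$ on $\ch$ is the translation $\ch\mapsto \ch-v_S$, where
\[
v_S=(1+\lfloor (b-s)a/b\rfloor)\ee_S-\lfloor sa/b\rfloor \ee_{\compl S},
\]
and $\ee_S$ denotes the indicator vector of $S$. The configurations reachable from $\ch$ by arbitrary moves therefore form the coset $\ch+L$, where $L$ is the subgroup generated by the vectors $v_S$.

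Next I will show $\Z^b/L\cong(\Z/a)^{b-1}$. Using $\lfloor (b-s)a/b\rfloor=a-\lceil sa/b\rceil$ and $\gcd(a,b)=1$, we get, for $0<s<b$,
\[
v_S\equiv (a-\lceil sa/b\rceil+1)\ee_S-\lfloor sa/b\rfloor\ee_{\compl S}\pmod{a\Z^b}.
\]
Modulo $\ones$ this reduces to $(a+1)\ee_S$-type vectors, and in particular $v_{[b]}=\ones$. I expect a Smith normal form computation (differences $v_{S\cup\{i\}}-v_S$ isolate coordinate vectors times $a$, plus $\ones$) to give $L=a\Z^b+\Z\ones$ or something of the same index $a^{b-1}$. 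Since $\Z^b/(a\Z^b+\Z\ones)\cong(\Z/a)^{b-1}$, this gives the group structure.

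\textbf{Step 2: existence of $k$-skeletal representatives.} Given a coset, I first reach a nonnegative configuration by adding multiples of $\ones=v_{[b]}$, that is, by borrowing $[b]$. I then $k$-stabilize legally, using (F1)/Proposition~\ref{prop:exist-kstab}. After that, while some legal borrow $\beta_T$ yields a $k$-stable configuration, I perform it. To guarantee termination, I will use a potential such as $\sum_i \ch(i)$. Each borrow move changes this sum by
\[
t(1+\lfloor (b-t)a/b\rfloor)-(b-t)\lfloor ta/b\rfloor>0,
\]
so the sum strictly increases, while $k$-stable configurations satisfy $0\le \ch(i)<a$ by Remark~\ref{rem:basic-bound}, so they form a finite set. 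Termination gives a $k$-skeletal configuration, reached by legal moves when $\ch\ge0$.

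\textbf{Step 3: uniqueness and counting.} Uniqueness is the main obstacle. My plan is to encode $\ch$ with $0\le\ch(i)<a$ as a labeled path in which label $i$ sits on the north step at an $x$-position determined by $\ch(i)$. The condition "$\ch$ is $k$-stable" then becomes a condition on runs of at most $k+1$ consecutive labels relative to the diagonal of slope $b/a$; Lemma~\ref{lem:test-stable} provides this dictionary. Under the same encoding, $k$-skeletality becomes the $k$-skeletal path condition.

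I would then prove a path-theoretic statement: the $k$-skeletal labeled paths are equinumerous with the labeled Dyck-type paths in the triangle, via a cyclic-shift or "sweep" bijection. Their number is $a^{b-1}$, the rational parking function count, and their unlabeled versions are counted by $\frac{1}{a+b}\binom{a+b}{a,b}$. This proves parts (3) and (4), with $S_b$ orbits corresponding to unlabeled paths because permuting vertices permutes labels.

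Combining part (3) with Step 2 shows that every one of the $a^{b-1}$ cosets contains at least one of the $a^{b-1}$ $k$-skeletal configurations. By pigeonhole, each coset contains exactly one, which proves part (1).

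\textbf{Step 4: part (2).} I would check directly that the map $\ch(i)\mapsto a-1-\lfloor a/b\rfloor-\ch(i)$ is an involution on the box $0\le\ch(i)\le a-1-\lfloor a/b\rfloor$. I would then show that it converts "no subset can fire" into "no single vertex can fire, and every nonempty legal borrow creates a single-vertex firing". This should be a computation with the identity
\[
\lfloor sa/b\rfloor+\lfloor (b-s)a/b\rfloor=a-1,
\]
valid for coprime $a,b$ and $0<s<b$, complementing $S$ to $\compl S$.
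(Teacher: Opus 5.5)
\textbf{There is a genuine gap in Step~3.} That step carries all the real content of parts (1), (3) and (4), and you only assert it.

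The sentence ``under the same encoding, $k$-skeletality becomes the $k$-skeletal path condition'' is exactly Theorem~\ref{thm:rat-skel}, and it is not a matter of dictionary. The definition of a $k$-skeletal configuration quantifies over \emph{all} legal borrows $\beta_T$. Condition (R2) quantifies only over cyclic shifts $Q_v$ with $\lvl(v)>0$, and a general $\beta_T$ does not act on $\lpath(\ch)$ as a cyclic shift. Bridging this needs three facts that appear nowhere in your plan:
\begin{itemize}
\item[(i)] Suppose $\ch$ is $k$-stable, $\beta_T$ is legal on $\ch$, and $\beta_T(\ch)$ is $0$-stable, with $t=|T|$. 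Then $T$ is the set of $t$ poorest vertices, and $\lpath(\ch)$ reaches the min-level point $v_t=(\lfloor ta/b\rfloor,t)$ by an east step. For $t<b$, legality forces every vertex with fewer than $\lfloor ta/b\rfloor$ chips into $T$. Conversely, $0$-stability forces every vertex of $T$ to have fewer than $\lfloor ta/b\rfloor$ chips, since otherwise it ends with at least $E(t)+W(t)=a$.
\item[(ii)] For such $T$, $\lpath(\beta_T(\ch))=Q_{v_t}$, because $E(t)=a-W(t)$ makes the $b-t$ richest vertices become the poorest.
\item[(iii)] On the path side, (R2) need only be checked at min-level points reached by east steps.
\end{itemize}
These are Lemmas~\ref{lem:test-skel}, \ref{lem:special-borrow} and~\ref{lem:test-skel1}. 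Without them your count of $k$-skeletal configurations, and hence the pigeonhole, has no foundation except when $k=b-1$, where the translation reduces to Lemma~\ref{lem:test-stable}.

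\textbf{The path enumeration is also missing.} You leave it to an unspecified ``cyclic-shift or sweep bijection.'' What you need is that each cyclic-shift class in $\mcP(\N^b\E^a)$ contains exactly one $k$-skeletal path. Since $\min(Q_v)=\min(Q)-\lvl(v)$, (R2) says no shift with smaller $\min$ satisfies (R1). So the $k$-skeletal element is the shift with smallest $\min$ among those with $\pos>k$; it exists because the Dyck path in the class has $\pos=b$. Run-preserving shift classes of labeled paths ending in $\E$ have size $a$, which gives $a^{b-1}$.

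Once you have this and Theorem~\ref{thm:rat-skel}, uniqueness in part (1) is immediate. A coset of $a\Z^b+\Z\ones$ meets $\{0,\ldots,a-1\}^b$ in a single run-preserving shift class, so your pigeonhole argument is valid but a detour.

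\textbf{The remaining steps are fine after small repairs.}
\begin{itemize}
\item In Step~1, $1+\lfloor (b-s)a/b\rfloor=a-\lfloor sa/b\rfloor$ gives $v_S=a\ee_S-\lfloor sa/b\rfloor\ones$ for $0<s<b$, not an ``$(a+1)\ee_S$-type'' vector. So $L=a\Z^b+\Z\ones$ exactly. This is more direct than the paper's Lemma~\ref{lem:chK}, which goes through uniqueness of superstable representatives.
\item Your direct approach to part (2) works, but the converse needs a specific borrowing set. If $\ch$ is not superstable, borrow in $\ch'$ at the $s$ richest vertices of $\ch$, with $s$ maximal such that they can fire. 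For $s<b$, legality follows from $\lfloor (b-s-1)a/b\rfloor\le\lfloor (b-s)a/b\rfloor-\lfloor a/b\rfloor$, and the result is $0$-stable.
\end{itemize}
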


By generalizing from the superstable case to the $k$-skeletal case, 
this theorem provides even stronger connections to the enumeration 
theory for rational-slope lattice paths, which we discuss next.

\subsection{\texorpdfstring{$k$}{k}-Skeletal Rational-Slope Lattice Paths}
\label{subsec:kskel-path-intro}

Section~\ref{sec:skel-paths} presents our combinatorial theory of
$k$-skeletal rational-slope lattice paths. We develop that theory
with no reference to chip-firing, although its definitions are
motivated by the quantized rational chip-firing model.
We summarize the main results here.

Fix relatively prime positive integers $a,b$.
We view lattice paths from $(0,0)$ to $(a,b)$ as sequences
of $b$ unit-length north steps and $a$ unit-length east steps.
Let $\mcP(\N^b\E^a)$ be the set of such paths.
We call $P\in\mcP(\N^b\E^a)$ an \emph{$(a,b)$-Dyck path} if 
all north steps of $P$ start on or above the line $bx=ay$. 
A classical cyclic-shifting argument (reviewed in~\S\ref{subsec:lev-cyc}) 
proves that the number of $(a,b)$-Dyck paths is the 
\emph{rational Catalan number} $\Cat_{a,b}=\frac{1}{a+b}\binom{a+b}{a,b}$.

For $Q\in\mcP(\N^b\E^a)$ and any lattice point $v\in\Z^2$ on $Q$, 
define $Q_v$ (the \emph{cyclic shift of $Q$ starting at $v$}) as follows. 
The steps of the path $Q_v$ (starting at the origin) 
consist of the steps of the path $Q$ leading from $v$ to $(a,b)$,
followed by the steps of the path $Q$ leading from $(0,0)$ to $v$.
For an integer $k$ with $0\leq k<b$, we call $Q\in\mcP(\N^b\E^a)$ a 
\emph{$k$-skeletal path for parameters $(a,b$)}
if and only if these two conditions hold:
\begin{itemize}
\item[(R1)] The last $k+1$ north steps of $Q$ start on or above $bx=ay$.
\item[(R2)] For each lattice point $v$ on $Q$ with $v$ strictly above $bx=ay$,
 (R1)~is false for $Q_v$.
\end{itemize}

\begin{example}\label{ex:skel-vs-dyck}
In the case $k=b-1$, Condition~(R1) means that $Q$ is an $(a,b)$-Dyck path.
For such a $Q$, and any $v$ on $Q$ strictly above $bx=ay$, the point
$(0,0)$ on $Q$ gets shifted to a point on $Q_v$ strictly below $bx=ay$.
The first north step of $Q_v$ after that point must start below $bx=ay$,
so (R1)~is false for $Q_v$. Thus $Q$ satisfies~(R2). In summary,
$(b-1)$-skeletal paths for $(a,b)$ are the same thing as $(a,b)$-Dyck paths.
\end{example}

We convert a path $Q\in\mcP(\N^b\E^a)$ to a \emph{labeled path}
as follows. Attach labels $1,2,\ldots,b$ (used once each)
to the $b$ unit-length north steps in $Q$ such that the labels of
consecutive north steps increase reading from bottom to top.
When cyclically shifting a labeled path, labels shift along with
the north steps they are attached to.

Labeled paths naturally encode chip configurations 
$\ch:[b]\rightarrow\Z$ where $0\leq \ch(i)\leq a$ for all $i\in [b]$.
(All $k$-stable configurations satisfy this condition,
 as noted in Remark~\ref{rem:basic-bound}.)
We make the labeled path encoding $\ch$, written $\lpath(\ch)$, as follows.
For $i,j\in [b]$, say $i$ is \emph{poorer} than $j$
(and $j$ is \emph{richer} than $i$) to mean $\ch(i)<\ch(j)$,
or $\ch(i)=\ch(j)$ and $i<j$. Let $v_1,v_2,\ldots,v_b$ be the 
vertices in $[b]$ listed in order from poorest to richest, and
let $x_1,x_2,\ldots,x_b$ be the corresponding chip counts
($x_k=\ch(v_k)$ for all $k$). For $k=1,2,\ldots,b$, draw a north step
in the Cartesian plane from $(x_k,k-1)$ to $(x_k,k)$ with label $v_k$.
There is a unique way to link together these north steps by adding east steps
to produce a labeled lattice path from $(0,0)$ to $(a,b)$.
This object is $\lpath(\ch)$, while the same path with labels erased
is denoted $\upath(\ch)$. When $S_b$ acts on configurations of a complete
graph $G$ by permuting the non-sink vertices, the orbit of $\ch$ corresponds
to the unlabeled path $\upath(\ch)$. 
We say a labeled path is \emph{$k$-skeletal} to mean the underlying
unlabeled path is $k$-skeletal.

We define $\lpath(\ch)$ and $\upath(\ch)$ similarly for any
nonnegative chip configuration $\ch$. The only catch is that
the last north step of the path may arrive at the line $y=b$ to the right
of the point $(a,b)$. In that case, we do not draw any east steps on this line.
Such paths are not $k$-skeletal for any $k$.

\begin{example}
Let $a=4$, $b=3$, and $\ch=202$, so $\ch(1)=\ch(3)=2$ and $\ch(2)=0$.
Figure~\ref{fig:lpath} shows $P = \lpath(\ch)$. Both $\ch$ and
$\lpath(\ch)$ are $0$-skeletal but not $1$-skeletal or
$2$-skeletal. Note that $P_v$ \emph{is} both $1$- and $2$-skeletal.
\begin{figure}[ht]
  {\scalebox{0.4}{\includegraphics{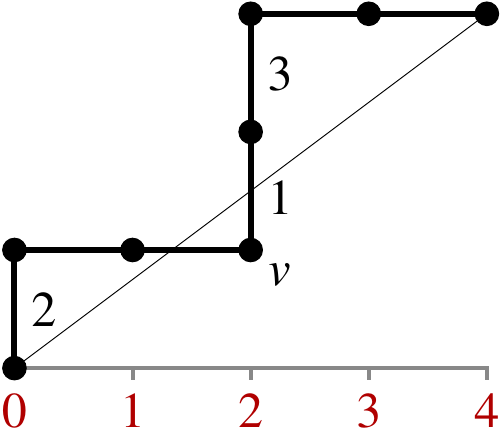}}}
  \caption{Labeled path $P=\lpath(\ch)$ for $\ch=202$.}
  \label{fig:lpath}
\end{figure}
\end{example}

Here is our main result for $k$-skeletal lattice paths.

\begin{theorem}\label{thm:main-rat}
Fix coprime integers $a,b>0$ and $k$ with $0\leq k<b$.
\begin{enumerate}
\item For each $P\in\mcP(\N^{\,b}\E^{\,a})$, there is exactly one $k$-skeletal 
path $Q$ such that $Q=P_v$ for some lattice point $v$ on $P$.
\label{thm:main-rat:uniq}

\item There are $\Cat_{a,b}$ $k$-skeletal paths for parameters $(a,b)$.
\label{thm:main-rat:cat}

\item For each labeled path $Q$ from $(0,0)$ to $(a,b)$ ending in an east
 step, there is exactly one $k$-skeletal cyclic shift of $Q$.
\label{thm:main-rat:shift}

\item There are $a^{b-1}$ labeled $k$-skeletal paths for parameters $(a,b)$.
\label{thm:main-rat:ab-1}

\end{enumerate}
\end{theorem}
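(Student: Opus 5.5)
We will prove part (1) first and derive the other three parts from it. The key tool is the \emph{level} of a lattice point. For a point $(x,y)$, set $\lvl(x,y)=ay-bx$. Because $\gcd(a,b)=1$, the $a+b$ lattice points of $P$ at positions $0,1,\ldots,a+b-1$ have pairwise distinct levels. A north step raises the level by $a$ and an east step lowers it by $b$. Cyclically shifting $P$ to start at $v$ subtracts $\lvl(v)$ from every level. So the shifts $P_v$ are indexed by the points $v$ of $P$, and each shift is determined by which point becomes the origin.

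In level language, condition (R1) for a path $Q$ says the following. Let $\ell_1,\ldots,\ell_b$ be the starting levels of the north steps of $Q$, read from bottom to top. Then (R1) requires $\ell_j\ge 0$ for the last $k+1$ north steps.

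Fix $P$ and a point $v$ of $P$. The north steps of $P_v$ are those of $P$, reordered cyclically to begin just after $v$, with levels shifted by $-\lvl(v)$. So (R1) for $P_v$ says that the $k+1$ north steps of $P$ immediately preceding $v$ in cyclic order all start at level $\ge \lvl(v)$.

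Condition (R2) for $Q=P_v$ says that no point $w$ of $P$ with $\lvl(w)>\lvl(v)$ also satisfies this property. Here we use that a point strictly above $bx=ay$ on $P_v$ corresponds to a point $w$ of $P$ with larger level than $v$. Hence $P_v$ is $k$-skeletal if and only if $v$ is the point of \emph{maximum level} among the points of $P$ satisfying the (R1)-type property.

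Uniqueness in (1) is then immediate, since levels are distinct. For existence, we must show that at least one point satisfies the property. Take $v$ to be the point of minimum level on $P$. Every north step then starts at level $\ge\lvl(v)$, so the property holds, and the maximum over a nonempty finite set exists.

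One subtlety needs care. Distinct points $v$ can give the same path $P_v$ when $P$ is periodic, but coprimality rules out any nontrivial period. Another worry is that taking the maximum among the good points might not be compatible with rewriting (R2) in terms of $P$. It is compatible: both conditions are intrinsic to the cyclic word together with its level function, which is invariant up to an additive shift.

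Part (2) follows from (1) together with the cyclic lemma reviewed in \S\ref{subsec:lev-cyc}. The $\binom{a+b}{a}$ paths split into orbits of size exactly $a+b$ under cyclic shifting. By (1), each orbit contains exactly one $k$-skeletal path, which gives $\binom{a+b}{a}/(a+b)=\Cat_{a,b}$ such paths.

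For (3), a labeled path ending in an east step can be viewed as a cyclic word in which labels increase along consecutive north steps. A cyclic shift is a valid labeled path only when the cut point lies at the bottom of a maximal run of north steps. We must check that the point $v$ chosen in (1) is such a point, i.e.\ that the step entering $v$ cyclically is an east step.

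This is where the work lies, and I expect it to be the main obstacle. Suppose instead that the step entering $v$ is a north step, starting at a point $u$ with $\lvl(u)=\lvl(v)-a$. Then that north step begins below $\lvl(v)$, which violates the property at $v$ whenever $k+1\ge1$. This always holds, so $v$ is preceded by an east step. The shift $P_v$ therefore starts after an east step and ends with one, so it is an honest labeled path. Uniqueness is inherited from (1), because distinct labelings keep distinct shift points.

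Finally, (4) follows by counting. Count labeled paths ending in an east step: choose the multiset of north-step $x$-coordinates in $\{0,\ldots,a-1\}$ together with labels. Equivalently, these correspond to functions $[b]\to\{0,\ldots,a-1\}$, of which there are $a^b$. Cyclic shifts act freely with orbits of size $a$, one for each east-step cut. By (3), each orbit contains exactly one labeled $k$-skeletal path, so there are $a^{b}/a=a^{b-1}$ of them.

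The freeness claim follows from coprimality, which rules out a nontrivial period in the unlabeled word. A small risk remains in matching the labeled cyclic-shift orbits with the functions counted here; this should be checked against the conventions of \S\ref{subsec:lev-cyc}.
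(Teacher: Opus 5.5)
Your proposal is correct and is essentially the paper's argument. Taking the maximum-level cut point $v$ with the (R1)-type property is exactly the paper's choice of the $>_T$-smallest path in the cyclic class with $\pos>k$, with the minimum-level (Dyck) shift guaranteeing existence; parts (b)--(d) then follow from the same orbit counts $\binom{a+b}{a}/(a+b)$ and $a^b/a$, using that the chosen shift must end in an east step. One small wording fix: a shift stays in $\mcLP(\N^b\E^a)_E$ exactly when the cut point is entered by an east step, and the cut point need not sit at the bottom of a run of north steps; this east-step condition is the one you actually verify.
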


We prove this theorem in~\S\ref{sec:skel-paths}.
Section~\ref{sec:compare-skel} translates these results into chip-firing
language and uses them to prove the facts stated earlier for $k$-skeletal
chip configurations in Theorem~\ref{thm:kskel-chip}.  
The main result of~\S\ref{sec:compare-skel} is
Theorem~\ref{thm:rat-skel}, which says that a nonnegative chip configuration
$\ch$ is $k$-skeletal if and only if $\lpath(\ch)$ is $k$-skeletal.

\subsection{Group Structure on Chip Configurations and Labeled Paths}
\label{subsec:group-intro}

We can give an algebraic interpretation of some of our results on
$k$-skeletal chip configurations for the quantized rational
chip-firing model.  Identify a chip configuration $\ch:[b]\rightarrow\Z$ 
with the vector of chip counts $\vv=(\ch(1),\ch(2),\ldots,\ch(b))$ in $\Z^b$.
Introduce an equivalence relation $\approx$ on $\Z^b$ by saying
that $\vv\approx\ww$ if we can convert $\vv$ to $\ww$ by
a finite sequence of (not necessarily legal) cluster-firing moves $\phi_S$
and borrow moves $\beta_T$ with no restriction on $|S|$ or $|T|$.
Let $\mcG=\Z^b/{\approx}$ be the set of equivalence classes for this
equivalence relation. In \S~\ref{sec:group}, 
we prove the following results.

\begin{theorem}\label{thm:group}
Fix coprime positive integers $a,b$.
\begin{enumerate}
\item Addition of integer vectors induces a group structure on $\mcG$
 that is isomorphic to $\Z_a^{b-1}$ (the direct product of $b-1$ copies
 of the integers modulo $a$).\label{thm:group:zab-1}
\item Every equivalence class in $\mcG$ contains nonnegative
 representatives.\label{thm:group:nonneg}
\item For any nonnegative $\ch,\ch'$ in the same equivalence class
 of $\mcG$, we can transform $\ch$ to $\ch'$ by a finite sequence of 
 cluster-fire and borrow moves that are all legal.\label{thm:group:seq}
\item For each $k$ with $0\leq k<b$, each equivalence class
 contains exactly one $k$-skeletal chip configuration.\label{thm:group:kskel}
\end{enumerate}
\end{theorem}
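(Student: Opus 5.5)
Write $f_s=1+\lfloor (b-s)a/b\rfloor$ and $g_s=\lfloor sa/b\rfloor$, so $\phi_S(\vv)=\vv-f_s\ones_S+g_s\ones_{\compl{S}}$ with $s=|S|$. Set $L\subseteq\Z^b$ to be the subgroup generated by the vectors $-f_s\ones_S+g_s\ones_{\compl S}$. Since $\phi_S$ and $\beta_S$ are translations by $\pm$ these vectors, $\vv\approx\ww$ iff $\vv-\ww\in L$. Hence $\mcG=\Z^b/L$ and addition is well defined, which settles the group structure in part~\ref{thm:group:zab-1}. It remains to identify $L$.

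\textbf{Step 1: containment $L\subseteq M$.} Let $M=\{\vv\in\Z^b:\ v_i\equiv v_j \pmod a \text{ for all } i,j,\ \sum_i v_i\equiv 0\pmod a\}$. Note $\Z^b/M\cong\Z_a^{b-1}$ via $\vv\mapsto(v_1-v_b,\dots,v_{b-1}-v_b)$. This map is surjective: given differences, pick $v_b$ with $bv_b+\sum_{i<b} d_i\equiv 0$, which is possible since $\gcd(a,b)=1$. Its kernel is exactly $M$. Each generator lies in $M$: the coordinates differ by $f_s+g_s=1+\lfloor (b-s)a/b\rfloor+\lfloor sa/b\rfloor=a$ when $0<s<b$ (the fractional parts sum to $1$ by coprimality), and equal $a$ up to sign otherwise. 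The coordinate sum is $-sf_s+(b-s)g_s$, and $\equiv -s(a-g_s)+(b-s)g_s= bg_s - sa\equiv bg_s \pmod a$. But $bg_s= sa-r_s$ with $r_s=sa\bmod b$, and this is not obviously $0\pmod a$. So here I will check carefully; if it fails, I will replace $M$ by a suitably twisted lattice and still aim to show its index is $a^{b-1}$.

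\textbf{Step 2: index count, the main obstacle.} The alternative route is $|\Z^b/L|=|\det|$ of a spanning set, i.e.\ the gcd of the $b\times b$ minors. Using $\ones_{[b]}$-firing (which gives $-\ones$) together with the single-vertex firings $\phi_{\{i\}}$ gives vectors $-\ones$ and $-a\ee_i+g_1\ones$. These span a lattice containing $a\ee_i$ and $\ones$, which has index $a^{b-1}$ (from $\det$ of $aI$ plus the relation from $\ones$). So $[\Z^b:L]$ divides $a^{b-1}$. For the reverse, I will show every generator lies in $\Z\ones+a\Z^b$: the coordinates of $-f_s\ones_S+g_s\ones_{\compl S}$ are congruent to $g_s$ mod $a$ (since $-f_s\equiv g_s$), so it equals $g_s\ones$ mod $a$. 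Thus $L=\Z\ones+a\Z^b$, whose quotient is $\Z_a^b/\langle\ones\rangle\cong\Z_a^{b-1}$. This corrects Step 1: the right $M$ has no sum condition. This proves \ref{thm:group:zab-1}.

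\textbf{Remaining parts.} For \ref{thm:group:nonneg}, add multiples of $-\ones$ via borrowing $\beta_{[b]}$, which adds $\ones$, until all entries are nonnegative. For \ref{thm:group:seq} and \ref{thm:group:kskel}, I will invoke Theorem~\ref{thm:kskel-chip}\ref{thm:kskel-chip:unique}. Every class contains a $k$-skeletal configuration reachable by legal moves from any nonnegative member, and that configuration is unique. So each class contains exactly one $k$-skeletal configuration. Two nonnegative $\ch,\ch'$ in the same class both legally reach it, and reversing the second sequence (fires become legal borrows, and vice versa) joins them. As a consistency check, the count of $a^{b-1}$ classes matches Theorem~\ref{thm:kskel-chip}\ref{thm:kskel-chip:ab-1}.
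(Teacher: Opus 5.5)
Your arguments for (a) and (b) are sound, but (c) and (d) rest on a circular citation. You derive both from Theorem~\ref{thm:kskel-chip}(\ref{thm:kskel-chip:unique}), yet that statement is essentially (c) and (d) themselves. ``A unique $k$-skeletal configuration reachable by arbitrary moves'' is exactly (d), and ``reachable by legal moves when $\ch\geq 0$'' plus reversal is (c). The paper does not prove it independently: it obtains it in \S\ref{sec:group} as a consequence of the same lemmas that establish Theorem~\ref{thm:group}. So the substantive content is never proved, namely that a firing class contains at least one $k$-skeletal configuration and cannot contain two. Your lattice computation cannot supply it, because knowing $\mcG=\Z^b/K$ with $K=\Z\ones+a\Z^b$ says nothing about which representatives are $k$-skeletal.

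The missing ingredient is the lattice-path theory. A $k$-skeletal configuration is $0$-stable, so it lies in $\{0,\ldots,a-1\}^b$ (Remark~\ref{rem:basic-bound}). Two such configurations in the same coset of $K$ differ by a constant modulo $a$, so their labeled paths are run-preserving cyclic shifts of one another. Theorem~\ref{thm:rat-skel} together with Theorem~\ref{thm:labeled-rat}(a) then forces them to be equal. Existence works the same way. Reduce any representative modulo $a$; this stays in the coset since $a\ee_i\in K$. Take the unique $k$-skeletal cyclic shift of its labeled path and read off the configuration, which is $k$-skeletal by Theorem~\ref{thm:rat-skel}. Then (c) goes through as you intended, but using superstabilizations: legally superstabilize $\ch$ and $\ch'$ (Proposition~\ref{prop:exist-kstab}), note that the results coincide by (d) with $k=b-1$, and reverse one of the two sequences. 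This is what the paper does in Lemmas~\ref{lem:kK-uniq} and~\ref{lem:chK} and Proposition~\ref{prop:ss-uniq}.

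Your proof of (a), by contrast, is a genuine simplification of the paper's. The paper gets ``same coset of $K$ implies $\approx$'' (Lemma~\ref{lem:chK}) only through uniqueness of superstable representatives, hence through all of the path theory. You observe that $\phi_{\{i\}}$ followed by $g_1$ applications of $\phi_{[b]}$ translates by $-a\ee_i$, and that $\beta_{[b]}$ translates by $\ones$. This gives $K\subseteq L$ directly and makes (a) purely algebraic. Do delete Step~1, though: its sum condition is false (for $a=3$, $b=2$, $\phi_{\{1\}}$ translates by $(-2,1)$), and Step~2 stands on its own.
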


We can also interpret elements of $\mcG$ as equivalence classes
of labeled paths in $\mcP(\N^{\,b}\E^{\,a})$ ending in
an east step, relative to the equivalence relation on such paths
induced by cyclically shifting the steps of the path. Each
equivalence class contains exactly one labeled $(a,b)$-Dyck path,
which encodes the unique superstable chip configuration in that class.

\subsection{Related Prior Work}
\label{subsec:related}

In this section, we place the results of this article in the context of
the extensive work previously done by many researchers on lattice
paths, chip-firing games, and their connections. We refer the reader
to~\cite{corry2018divisors,klivans} for general introductions to this topic.
The chip-firing game, first introduced by~\cite{biggs,
  bjorner-lovasz-shor}, has a long history with connections to 
many different areas of mathematics including
statistical physics~\cite{bak, Dhar}, arithmetic
geometry~\cite{Lorenzini89,Lorenzini91,raynaud1970specialisation},
poset theory~\cite{mosesian1972strongly, pretzel1986reorienting, 
propp2002lattice}, 
lattice theory~\cite{amini2010riemann,bacher1997lattice,biggs},
tropical geometry~\cite{baker-norine,mikhalkin-zharkov},
commutative algebra~\cite{rossin, perkinson, postnikov-shapiro},  
and combinatorial representation 
theory~\cite{MR3484736,cori2016hall,Sandpile2024,Dukes-LeBorgne}.

First, we compare the quantized chip-firing model and $k$-skeletal
objects studied in this paper to the analogous concepts from our
recent paper~\cite{ratchip1}. Although the definitions of $k$-skeletal
chip configurations agree in these two settings, 
these definitions rely on different firing rules. 
In the chip-firing model of~\cite{ratchip1}, 
we select an additive subgroup $\mcG$ of $\mathbb{R}$ and
positive parameters $m,c\in\mcG$ representing
the capacities of non-sink edges and sink edges, respectively.
The chip counts at each vertex take values in $\mcG$
and need not be integers. When a vertex fires, it sends $c$ chips
to the sink and $m$ chips along every other outgoing edge, with
no rounding.

In both models, it is possible to define $k$-skeletal objects
and to represent chip configurations as certain labeled paths in the plane.
But the north steps of the paths used in~\cite{ratchip1} have $x$-coordinates
in $\mcG$ and need not be lattice paths.  
When $\mcG$ is not a discrete subgroup of $\R$,
there may be infinitely many $k$-skeletal chip configurations
and $k$-skeletal paths.  Although we can
recover the combinatorics of Fuss--Catalan lattice paths and parking functions
by taking $\mcG=\Z$, $c=1$, and $m\in\Z_{>0}$,
the model in~\cite{ratchip1} does not give a correspondence between superstable
chip configurations and general rational-slope parking functions.

The new quantized chip-firing model in this article overcomes this
limitation. The new model has nicer integrality and finiteness properties
compared to the general case of the other model. The Fuss--Catalan objects
can still be recovered as a special case of the new model too, since removing
the final east step from an $(mn+1,n)$-Dyck path maps such paths bijectively
to lattice paths in the triangle with vertices $(0,0)$, $(0,n)$, and $(mn,n)$.
Moreover, in the special case $a=mn+1$, $b=n$, and $c=1$, the firing rules
of the quantized model in~\S\ref{subsec:rat-chip-model1} reduce to the firing 
rules of the integral model in~\S\ref{subsec:int-chip-model}, 
as is readily checked.

Second, we discuss several generalizations of parking functions and
how these generalizations relate to chip-firing.  A \emph{parking
function of order $n$} (see~\cite{yan,Konheim,riordan}) is an
$n$-tuple of nonnegative integers $\alpha =
(\alpha_1,\alpha_2,\ldots,\alpha_n)$ such that when the parts are placed in
increasing order, say $\alpha_{i_1}\leq \alpha_{i_2}\leq \cdots \leq
\alpha_{i_n}$, we have $\alpha_{i_j} \leq j-1$ for all $j$ between $1$
and $n$.  Parking functions of order $n$ are in bijection with trees
on $n+1$ labeled vertices~\cite{kreweras}. It is a classical result
(see~\cite{Konheim}) that the number of parking functions of order $n$
is $(n+1)^{n-1}$. In the context of labeled trees, this counting
result is due to Cayley~\cite{cayley}.
Parking functions are central to the field of algebraic combinatorics. To
mention just one example of where they appear, the Frobenius series for the
$S_n$-module of diagonal coinvariants can be expressed as a weighted sum 
indexed by parking functions~\cite{carlsson,hhlru}.

Interpreting $\alpha$ as a chip configuration and replacing $\alpha$
by $\lpath(\alpha)$, we obtain the well-known identification of
parking functions with labeled lattice paths lying weakly above
$y=x$. By erasing the north-step labels (i.e., considering
$S_n$-orbits), we obtain the set of Dyck paths of order $n$. These are
enumerated by the Catalan numbers (see Stanley's compilations of
interpretations~\cite[Ex. 6.19]{ECII} and~\cite{addendum}).  Thus we
may expect that anywhere the Catalan numbers arise, parking functions
potentially do as well.

There are too many generalizations of parking functions to cover here,
but see~\cite{defective,zara,cchjr} for a few examples. For
the purposes of this article we are interested in two broad families:
vector parking functions and graphical parking functions.

Vector parking functions (also known as generalized parking functions
or $\bx$-parking functions) were introduced by Pitman and
Stanley~\cite{pitman}. Fix a vector $\bx = (x_1,x_2,\ldots,x_n)$ of
nonnegative integers. An \emph{$\bx$-parking function} is an $n$-tuple
$\alpha=(\alpha_1,\alpha_2,\ldots,\alpha_n)\in\Z^n_{\geq 0}$ 
such that for all $j$ between $1$ and $n$, the $j$th smallest entry of $\alpha$
is at most $x_1+x_2+\cdots +x_j-1$. Within this class of
vector parking functions, the classical parking functions correspond to
$\bx = (1,1,\ldots,1)$.  Results on vector parking functions 
may be found in~\cite{YanGPF,YanGen,KungYan}. 
The rational parking functions for slope $b/a$, 
appearing in~\cite{armstrong-ratcat,GMV} among other places,
are the special case of vector parking functions where $n=b$ and $\bx$
is chosen so that $\lpath(\alpha)$ lies in the triangle with vertices 
$(0,0)$, $(0,b)$, and $(a,b)$.

To define graphical parking functions,
we start with a loopless multigraph $G$ on the vertex set
$V=\{0,1,\ldots,n\}$. To explain the role of $G$,
we rephrase the condition on $\alpha$ given in our description of classical
parking functions: for any nonempty subset $U \subseteq
\{\alpha_1,\alpha_2,\ldots,\alpha_n\}$ of size $\ell$, 
there exists $\alpha_i\in U$ with $\alpha_i \leq n-\ell$. Write $\deg_U(i) =
\sum_{j\in V\setminus U} \omega(i,j)$ where $\omega(i,j)$ denotes
the number of edges between vertex $i$ and vertex $j$ in graph $G$. A
\emph{$G$-parking function} is an $n$-tuple $\beta =
(\beta_1,\beta_2,\ldots,\beta_n)$ such that
\begin{equation}\label{eq:Gpark}
  \text{for any nonempty subset $U\subseteq [n]$,
    there exists $i\in U$ such that $\beta_i \leq \deg_U(i)-1$.}
\end{equation}
The classical parking functions correspond to the case of
$G$ being the complete graph with $\omega(i,j)=1$ for all $i\neq j$.

Gaydarov and Hopkins~\cite{gaydarov2016parking} 
investigate the conditions under which
$G$-parking functions can be realized as $\bx$-parking
functions. Because of the inherent symmetry in the definition of
vector parking functions, $G$ must carry a corresponding symmetry.
Even assuming this property, it turns out the possibilities are quite
limited. Theorem~2.5 of~\cite{gaydarov2016parking} proves that
there is overlap in only three cases: 1)~$G$ is an $a$-tree; 
2)~$G$ is an $a$-cycle; or 3)~for some $c,m$, $\omega(0,i) = c$ 
for all $i\neq 0$ and $\omega(i,j)=m$ for all distinct and
nonzero $i,j$. In this third case, $G$ is a complete graph 
on $\{0,1,2,\ldots,n\}$ where sink edges have weight $c$
and non-sink edges have weight $m$; and the $G$-parking
functions are precisely the $\bx$-parking functions
for $\bx=(c,m,m,\ldots,m)$. This situation corresponds to the integral
chip-firing model from~\S\ref{subsec:int-chip-model}.

We introduce a well-behaved, quantized chip-firing model in
~\S\ref{subsec:rat-chip-model1} for which the resulting superstable
configurations are canonically in bijection with the rational parking
functions. This may be viewed as a generalization or extension of the
correspondence studied by Gaydarov and Hopkins. In particular,
consider the quantized chip-firing model on the complete graph as
introduced in~\S\ref{subsec:rat-chip-model1}. We interpret a $b$-tuple
$\beta = (\beta_1,\beta_2,\ldots,\beta_b)$ of nonnegative integers as
encoding the number of chips on the non-sink vertices. Such a
$b$-tuple $\beta$ encodes a superstable chip configuration if and only
if
\begin{equation}\label{eq:Gpark-quant}
  \text{for any nonempty subset $U\subseteq [b]$,
    there exists $i\in U$ such that $\beta_i \leq ((b-|U|)a/b + 1) - 1$.}
\end{equation}
Here, $(b-|U|)a/b + 1$ is the analogue of $\deg_U(i)$ on a
complete graph with edge weights as in \S\ref{subsec:rat-chip-model1}.
Note that the floor function is not necessary at this point and is
only needed when we start investigating the dynamics of quantized
rational chip-firing.

\section{General Quantized Rational Chip-Firing Model}
\label{sec:rat-chip-model}

\subsection{Definition of the Model}
\label{subsec:def-model}

This section defines the general version of our quantized rational
chip-firing model that was introduced in~\S\ref{subsec:rat-chip-model1}.
The model uses positive integer parameters $a,b,c$.
Let $G$ be a graph with edge set $E$ and vertex set $V=\{0\}\cup [n]$
for some positive integer $n$. Vertex $0$ is the \emph{sink}
and all other vertices are \emph{non-sink vertices}.
We assume $G$ is simple and undirected 
(meaning each $e\in E$ is a two-element subset of $V$)
and there is an edge from $0$ to $i$ for all $i\in [n]$.

For $i\in [n]$, let $\neigh(i)$ denote the set of non-sink
vertices adjacent to $i$, and write $\deg(i) = |\neigh(i)|$.
Informally, each edge from the sink to another vertex has capacity $c$,
while each edge between two non-sink vertices has capacity $a/b$.
As in the introduction, a \emph{chip configuration} is 
a function $\ch:[n]\rightarrow\Z$, and $\ch\geq 0$ means
$\ch(i)\geq 0$ for all $i\in [n]$.

We first define the \emph{firing move} $\phi_i$ for a non-sink vertex $i$.
By definition, $\phi_i(\ch)$
is the configuration obtained from $\ch$ by decreasing $\ch(i)$ by
$c+\lfloor \deg(i)a/b\rfloor$ and increasing $\ch(j)$ by 
$\lfloor a/b\rfloor$ for all $j\in\neigh(i)$. 
We say $i$ \emph{can (legally) fire} in configuration $\ch\geq 0$
if $\phi_i(\ch)\geq 0$, which means $\ch(i)\geq c+\lfloor \deg(i)a/b \rfloor$. 

Next we define the \emph{cluster-firing move} $\phi_S$ for
any subset $S$ of $[n]$. For $X\subseteq [n]$ and $i\in [n]$, 
write $\compl{X}=[n]\setminus X$, $\neigh_{X}(i) = \neigh(i)\cap X$, and 
$\deg_{X}(i) = |\neigh_{X}(i)|$.  By definition, $\phi_S(\ch)$
is the configuration obtained from $\ch$ by decreasing $\ch(i)$ by
$c+\lfloor \deg_{\compl{S}}(i)a/b\rfloor$ for each $i\in S$ 
and increasing $\ch(j)$ by $\lfloor
\deg_{S}(j)a/b\rfloor$ for all $j\in \compl{S}$. 
We say $S$ \emph{can (legally) fire} in configuration $\ch\geq 0$
if $\phi_S(\ch)\geq 0$, which means 
$\ch(i)\geq c+\lfloor \deg_{\compl{S}}(i)a/b\rfloor$ for each $i\in S$.
As before, $\phi_{\varnothing}$ is the identity map.
The \emph{borrow move} $\beta_S$ is $\phi_S^{-1}$.
This borrow move is \emph{legal} on $\ch\geq 0$ if $\beta_S(\ch)\geq 0$.

\begin{remark}
 In classical models of chip-firing, firing a vertex or a set of
 vertices conserves the total number of chips, once the sink is
 taken into account. In quantized firing, this is no longer the case
 as $\lfloor xa/b\rfloor \neq x\lfloor a/b\rfloor$ in general. 
 For example, take $a=2$, $b=3$, $c=1$, and $G$ to be the complete
 graph on $\{0,1,2,3\}$.  When a single vertex fires, it loses
 two chips even though the sink only gets one chip and the other
 two non-sink vertices gain none.
 Similarly, in the quantized rational chip-firing model, 
 firing several vertices sequentially is \emph{not} always the same as
 cluster-firing them, even when all the moves are legal.
\end{remark}

Let $0\leq k\leq n-1$.
A \emph{$k$-firing move} is a move $\phi_S$ where $0<|S|\leq k+1$.
A configuration $\ch\geq 0$ is \emph{$k$-stable} if no $k$-firing move 
is legal for $\ch$. If some finite sequence of legal $k$-firing moves
converts $\ch\geq 0$ to a $k$-stable $\ch'$, then we call $\ch'$
a \emph{$k$-stabilization} of $\ch$. We prove shortly that $k$-stabilizations
exist for all $k$ and that the $0$-stabilization of $\ch$ is unique.
The next example shows that $k$-stabilizations are not unique, in general,
even in the superstable case ($k=n-1$).

\begin{example}\label{ex:stab}
 Use parameters $a=2$, $b=5$, $c=1$, and the graph $G$ shown in
 Figure~\ref{fig:graph}.  Take $X=\{1,2\}$, $Y=\{2,3,4\}$ and $\ch=112100$. 
 First we compute $\phi_X(\ch)$ in full detail.
 When the cluster-fire move $\phi_X$ acts on $\ch$,
 vertex $1$ loses $1+\lfloor 2\cdot \frac{2}{5}\rfloor = 1$
 chip, vertex $2$ loses $1+\lfloor 1\cdot \frac{2}{5}\rfloor = 1$ chip,
 vertex $3$ gains $\lfloor 1\cdot \frac{2}{5}\rfloor = 0$ chips,
 and vertex $5$ gains $\lfloor 2\cdot \frac{2}{5}\rfloor = 0$ chips.
 Vertices $4$ and $6$ are unaffected by this firing move.
 In summary, $\phi_X(\ch)=002100$.
 By similar calculations, we find $\phi_{\{3,4\}}(002100)=000000$,
 which is certainly superstable.  However, $\phi_Y(\ch)=100010$
 and $\phi_1(100010)=000010$ is a different superstable configuration
 reachable from $\ch$.  On the one hand, this gives an example illustrating the
 non-uniqueness of superstabilizations. 
On the other hand, one can also check that the classical Diamond
Property~\cite[Thm. 2.2.2 (1)]{klivans} (also called ``local
confluence'') is not valid in this setting: There do not exist sets
$Z$ and $Z'$ that can be legally fired such that $\phi_{Z}\circ\phi_X(\ch) =
\phi_{Z'}\circ\phi_Y(\ch)$. (The above computations show that the
most natural candidates, $Z=Y\setminus X$ and $Z'=X\setminus Y$, don't
lead to a common configuration.)
\end{example}
  \begin{figure}[ht]
    \centering
    \includegraphics[width=0.3\linewidth]{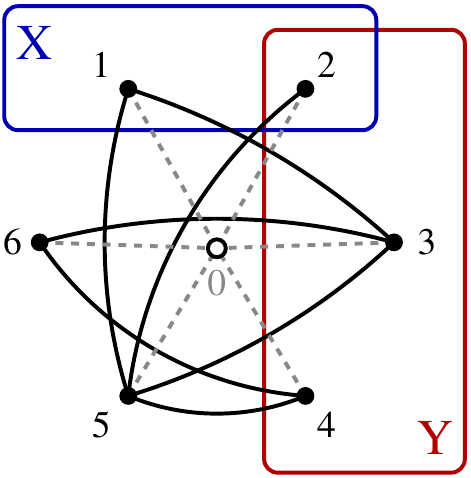}
    \caption{The graph from Example~\ref{ex:stab}. The black edges
      between non-sink vertices have capacity $2/5$ each. The gray,
      dotted edges incident with the sink vertex $0$ have capacity
      $1$ each.}
    \label{fig:graph}
  \end{figure}

\subsection{Existence of \texorpdfstring{$k$}{k}-Stabilizations}
\label{subsec:exist-k-stab}

We find a $k$-stabilization of a chip configuration $\ch\geq 0$
by repeatedly performing legal $k$-firing moves until a $k$-stable
configuration is reached. To settle the existence of $k$-stabilizations,
the only issue is proving that such a sequence of moves always terminates
in finitely many steps. For this purpose, we define the
\emph{total chip count} of any configuration $\ch$ to be
$\degch{\ch}=\sum_{i=1}^{n}\ch(i)$. As always, we disregard
the chip count at the sink. The key fact is that a non-identity
cluster-fire move always decreases the total chip count.

\begin{prop}\label{prop:fin}
  Given a chip configuration $\ch$ on $G$ and $\varnothing
  \subsetneq S \subseteq [n]$, $\degch{\phi_S(\ch)} < \degch{\ch}$.
\end{prop}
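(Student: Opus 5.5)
We compute the change in total chip count directly from the definition of $\phi_S$ and show it is strictly negative. Write $x=a/b$. By definition,
\[
\degch{\phi_S(\ch)}-\degch{\ch}
= -\sum_{i\in S}\Bigl(c+\bigl\lfloor \deg_{\compl{S}}(i)\,x\bigr\rfloor\Bigr)
 + \sum_{j\in\compl{S}} \bigl\lfloor \deg_{S}(j)\,x\bigr\rfloor .
\]
Since $S\neq\varnothing$, the sink contributions give $-c|S|\leq -c<0$. It therefore suffices to show that the remaining terms satisfy
\[
\sum_{j\in\compl{S}} \bigl\lfloor \deg_{S}(j)\,x\bigr\rfloor \;\leq\; \sum_{i\in S}\bigl\lfloor \deg_{\compl{S}}(i)\,x\bigr\rfloor .
\]

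The key step is a double count of the edges between $S$ and $\compl{S}$. Both $\sum_{j\in\compl{S}}\deg_S(j)$ and $\sum_{i\in S}\deg_{\compl{S}}(i)$ equal the number $e(S,\compl{S})$ of non-sink edges joining $S$ to $\compl{S}$. Without floors the two sides would therefore agree exactly. The floors break this symmetry, and we need the inequality in the right direction, which does \emph{not} follow from the double count alone. One checks quickly that it can fail in general: with $x=2/5$ and $S$ a single vertex $i$ of degree $3$ into $\compl{S}$, the right side is $\lfloor 6/5\rfloor=1$ while the left side is $3\lfloor 2/5\rfloor=0$, which is fine. Reversing the roles, with $|S|=3$ each adjacent only to one common vertex $j\in\compl{S}$, the left side is $\lfloor 6/5\rfloor=1$ and the right side is $0$. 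In that case the net change is $-3c+1$, which is still negative because $c\ge1$ and $|S|=3$. So the floor-comparison inequality is false as stated, and the proof must use the sink term as well.

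The corrected plan is to bound the gain crudely and absorb it into the sink loss. Using $\lfloor y\rfloor\le y$ and $\lfloor y\rfloor\ge 0$, we have
\[
\sum_{j\in\compl{S}}\lfloor \deg_S(j)x\rfloor \le x\,e(S,\compl{S}),
\qquad
\sum_{i\in S}\bigl(c+\lfloor\deg_{\compl S}(i)x\rfloor\bigr) > \sum_{i\in S}\bigl(c-1+\deg_{\compl S}(i)x\bigr)=(c-1)|S|+x\,e(S,\compl S).
\]
Here the strict inequality uses $\lfloor y\rfloor>y-1$ for each of the $|S|\ge1$ terms. Combining these gives
\[
\degch{\ch}-\degch{\phi_S(\ch)} > (c-1)|S| \ge 0 .
\]
This works because $c\geq 1$ and $S$ is nonempty. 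The only subtle point, and the one I expected to be the main obstacle, is the loss of conservation caused by rounding. The resolution is that each fired vertex loses at most one chip's worth to rounding, and this loss is more than paid for by the at least $c\ge1$ chips it sends to the sink.
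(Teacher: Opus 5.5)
Your corrected argument is right and is essentially the paper's proof. Both cancel the unrounded terms via the double count $\sum_{j\in\compl{S}}\deg_S(j)=\sum_{i\in S}\deg_{\compl{S}}(i)$, discard the rounding on the gain side, and charge each fired vertex less than one chip of rounding loss against the $c\geq 1$ chips it sends to the sink (the paper uses the slightly sharper bound $\{d_i a/b\}\leq (b-1)/b$ where you use $\lfloor y\rfloor>y-1$, and either gives strict decrease).
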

\begin{proof}
  Let $S = \{v_1,v_2,\ldots,v_\ell\}$ and $\compl{S} =
  \{w_1,w_2,\ldots,w_{n-\ell}\}$. Write $d_i = \deg_{\compl{S}}(v_i)$ 
  for $1\leq i\leq \ell$ and $e_j = \deg_S(w_j)$ for $1\leq j\leq n-\ell$. 
  When $S$ fires, each $v_i\in S$ loses $c+\lfloor d_ia/b \rfloor$ 
  chips, and each $w_j\in \compl{S}$ gains $\lfloor
  e_ja/b\rfloor$ chips. For any real $x$,
  write $\{x\}$ for the fractional part of $x$, 
  so $x = \lfloor x\rfloor + \{x\}$. Summing over all non-sink
  vertices, we find that the net change in the total chip count
  going from $\ch$ to $\phi_S(\ch)$ is
  \begin{align*}
    \sum_{j=1}^{n-\ell} \left\lfloor e_j\frac{a}{b}\right\rfloor 
- c\ell-\sum_{i=1}^{\ell} \left\lfloor d_i \frac{a}{b} \right\rfloor 
&= 
    -c\ell+\sum_{j=1}^{n-\ell} \left(e_j\frac{a}{b} 
- \left\{e_j\frac{a}{b}\right\}\right) 
- \sum_{i=1}^{\ell} \left(d_i \frac{a}{b}
- \left\{d_i \frac{a}{b} \right\}\right)\\
    &=-c\ell+\frac{a}{b}\left(\sum_{j=1}^{n-\ell} e_j
    - \sum_{i=1}^{\ell} d_i \right)
    + \sum_{i=1}^{\ell} \left\{d_i \frac{a}{b}\right\}
    - \sum_{j=1}^{n-\ell}\left\{e_j\frac{a}{b}\right\}.
\end{align*}
Now $\sum_{j=1}^{n-\ell} e_j=\sum_{i=1}^{\ell} d_i$ since both
sums count the edges joining some vertex in $S$ to some vertex in $\compl{S}$.
Next, since each $d_i$ is an integer, the fractional part of
$d_i\frac{a}{b}$ is at most $\frac{b-1}{b}$ for each $i$. 
Finally, by discarding the subtracted sum of $\{e_ja/b\}$, we obtain 
\[ \degch{\phi_S(\ch)}-\degch{\ch} \leq -c\ell+\ell(b-1)/b
 =\frac{\ell}{b}(-cb+b-1)<0, \]
since $c\geq 1$.
\end{proof}

\begin{prop}\label{prop:exist-kstab}
For each chip configuration $\ch\geq 0$ and each $k$ between 
$0$ and $n-1$, at least one $k$-stabilization of $\ch$ exists.
\end{prop}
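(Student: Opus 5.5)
The plan is to run the obvious greedy procedure and show it must stop. Starting from $\ch_0=\ch\geq 0$, if $\ch_t$ is not $k$-stable, choose any legal $k$-firing move $\phi_S$ (so $0<|S|\leq k+1$ and $\phi_S(\ch_t)\geq 0$) and set $\ch_{t+1}=\phi_S(\ch_t)$. By definition of legality, every $\ch_t$ is again a nonnegative configuration. So the only thing to prove is that this process cannot continue forever: once it halts at some $\ch_T$, that configuration is $k$-stable and is reached from $\ch$ by legal $k$-firing moves, i.e.\ it is a $k$-stabilization.

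For termination I will use the total chip count $\degch{\cdot}$ as a potential function. Each step fires a nonempty $S$, so Proposition~\ref{prop:fin} gives $\degch{\ch_{t+1}}<\degch{\ch_t}$. Since chip counts are integers, this means $\degch{\ch_{t+1}}\leq \degch{\ch_t}-1$. Legality also gives $\ch_t\geq 0$, hence $\degch{\ch_t}\geq 0$ for every $t$. A strictly decreasing sequence of nonnegative integers has length at most $\degch{\ch}+1$. So the process halts after at most $\degch{\ch}$ moves, necessarily at a configuration admitting no legal $k$-firing move.

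No step should be a real obstacle, since the substantive estimate is already in Proposition~\ref{prop:fin}. The one point to state explicitly is that nonnegativity is preserved along the sequence. This is built into the definition of a legal move, and it is what supplies the lower bound on $\degch{\cdot}$. One could also note that the same argument works for any $c\geq 1$ and any graph $G$, as Proposition~\ref{prop:fin} was proved in that generality.
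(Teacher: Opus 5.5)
Your proof is correct and is essentially the paper's own argument. The paper likewise observes that legal moves keep the configuration nonnegative, so the total chip count $\degch{\ch}$ is a nonnegative integer that strictly decreases with each move by Proposition~\ref{prop:fin}; hence any sequence of legal $k$-firing moves terminates at a $k$-stable configuration.
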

\begin{proof}
The total chip count $\degch{\ch}$ is a nonnegative integer.
As we perform any sequence of legal $k$-firing moves on $\ch$,
the resulting configurations are all nonnegative and
the total chip count strictly decreases with each move.
Thus the sequence must terminate after finitely many moves,
and the final configuration is a $k$-stabilization of $\ch$.
\end{proof}

\begin{remark}
  We must place \emph{some} condition on the graph and weights in
  order to ensure there is always a $k$-stabilization. We have chosen
  to require the sink to be adjacent to every vertex and for all edges
  incident to the sink to have weight at least $1$. Suppose this is
  not the case: take a $3$-cycle on vertices $u$, $v$, $w$, with a
  sink attached as a leaf to $w$. Set $a/b=1/2$. If we cluster-fire
  $\{u,v\}$, vertex $w$ gains a chip and $u$ and $v$ are left
  unchanged. Hence no nonnegative chip configuration on this graph is
  $1$-stable. To prevent such scenarios, we have required that the
  sink be a drain on any vertex that gets fired.
\end{remark}

\subsection{Uniqueness of \texorpdfstring{$0$}{0}-Stabilizations}
\label{subsec:uniq-0-stab}

The uniqueness of $0$-stabilizations relies on the following
basic fact, which may be readily checked.

\begin{prop}[Diamond Property for $k=0$]\label{prop:diamond-prop}
Suppose $\ch\geq 0$ is a chip configuration and $i,j\in [n]$
are distinct vertices where $\phi_i$ and $\phi_j$ are both legal firing
moves for $\ch$. Then $\phi_j$ is legal for $\phi_i(\ch)$,
$\phi_i$ is legal for $\phi_j(\ch)$, and
$\phi_j(\phi_i(\ch)) = \phi_i(\phi_j(\ch))$.
\end{prop}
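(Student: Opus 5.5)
The proof is a direct computation from the definition of single-vertex firing. Throughout, write $\ch_1=\phi_i(\ch)$, $\ch_2=\phi_j(\ch)$, $L(v)=c+\lfloor \deg(v)a/b\rfloor$ for the loss at $v$ when $v$ fires alone, and $g=\lfloor a/b\rfloor\geq 0$ for the gain at each neighbor. By definition, $\phi_v$ subtracts $L(v)$ from coordinate $v$ and adds $g$ to each coordinate $w\in\neigh(v)$. It leaves all other coordinates unchanged.

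\textbf{Commutativity.} I first observe that each single-vertex firing $\phi_v$ acts on $\Z^n$ as translation by a fixed vector. That vector is $\delta_v=-L(v)\ee_v+g\sum_{w\in\neigh(v)}\ee_w$, where $\ee_w$ denotes the $w$th standard basis vector. Crucially, $\delta_v$ depends only on $v$ and the graph, not on $\ch$. Translations commute, so $\phi_j(\phi_i(\ch))=\ch+\delta_i+\delta_j=\phi_i(\phi_j(\ch))$.

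This is exactly where the case $k=0$ differs from cluster-firing. For a cluster $S$, the rounding in $\phi_S$ is applied to $\deg_{\compl{S}}(v)a/b$ and $\deg_S(w)a/b$, so firing sets sequentially is not additive. For single vertices, each edge contributes its own separately rounded amount, and the moves are honest translations.

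\textbf{Legality.} It remains to show that $\phi_j$ is legal on $\ch_1$; the case of $\phi_i$ on $\ch_2$ is symmetric. I will check that every coordinate of $\ch_1+\delta_j$ is nonnegative.
\begin{itemize}
\item \emph{Coordinate $j$.} We have $\ch_1(j)\geq \ch(j)$, because $\phi_i$ only decreases coordinate $i$ and $j\neq i$. Since $\phi_j$ is legal on $\ch$, we know $\ch(j)\geq L(j)$. Hence $\ch_1(j)-L(j)\geq 0$.
\item \emph{Coordinate $i$.} Here $\ch_1(i)\geq 0$ because $\phi_i$ is legal on $\ch$. The move $\phi_j$ only adds $g\geq 0$ or $0$ to this coordinate, so it stays nonnegative.
\item \emph{Any other coordinate $w$.} Both moves only add $g\geq 0$ or $0$ to it, starting from $\ch(w)\geq 0$.
\end{itemize}

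The only point requiring care is the coordinate-$j$ check, where one uses that firing a different vertex never decreases $\ch(j)$, since $g\geq 0$. With that observation the proof is complete, and I do not anticipate any real obstacle.
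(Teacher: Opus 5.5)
Your proof is correct. It is exactly the routine verification the paper leaves to the reader (``may be readily checked''): single firings are translations by vectors depending only on the vertex, and firing one vertex never decreases any other vertex's chip count since $\lfloor a/b\rfloor\geq 0$. One small correction to your side remark: every cluster move $\phi_S$ is also a translation by a fixed vector $\delta_S$, so the cluster diamond in Example~\ref{ex:stab} does not fail because cluster moves stop being translations; it fails because rounding makes $\delta_S$ non-additive in $S$, so in general $\delta_A+\delta_{B\setminus A}\neq\delta_B+\delta_{A\setminus B}$.
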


We already saw in Example~\ref{ex:stab} that the analogous property
for cluster-fires need not hold in this quantized rational chip-firing model.

\begin{theorem}\label{thm:unique0}
 For any graph $G$ and configuration $\ch\geq 0$,
 $\ch$ has exactly one $0$-stabilization.
\end{theorem}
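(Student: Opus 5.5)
Existence is already given by Proposition~\ref{prop:exist-kstab} with $k=0$, so only uniqueness needs proof. The plan is the standard Newman's Lemma argument: local confluence (Proposition~\ref{prop:diamond-prop}) plus termination implies confluence, hence unique normal forms. Here the relevant rewriting system has nonnegative configurations as objects and single-vertex legal firings $\phi_i$ as moves. Termination comes from Proposition~\ref{prop:fin}: each move strictly decreases the nonnegative integer $\degch{\ch}$.

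Concretely, I will argue by strong induction on $\degch{\ch}$ that every $\ch\geq 0$ has at most one $0$-stabilization. Suppose $\ch$ has two $0$-stabilizations $\ch_1,\ch_2$, reached by legal single-vertex firing sequences $\sigma_1,\sigma_2$. If either sequence is empty, then $\ch$ itself is $0$-stable. In that case no move is legal, so both sequences are empty and $\ch_1=\ch_2=\ch$.

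Otherwise let $\sigma_1$ begin with $\phi_i$ and $\sigma_2$ begin with $\phi_j$. If $i=j$, both stabilizations are stabilizations of $\phi_i(\ch)$, which has strictly smaller total chip count, so induction gives $\ch_1=\ch_2$. If $i\neq j$, Proposition~\ref{prop:diamond-prop} gives $\ch'':=\phi_j(\phi_i(\ch))=\phi_i(\phi_j(\ch))$, with both orders legal. By Proposition~\ref{prop:exist-kstab}, $\ch''$ has some $0$-stabilization $E$. Then $E$ is a $0$-stabilization of $\phi_i(\ch)$ (fire $j$, then stabilize), and so is $\ch_1$. Since $\degch{\phi_i(\ch)}<\degch{\ch}$, induction gives $\ch_1=E$. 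Symmetrically, $\ch_2=E$ via $\phi_j(\ch)$. Hence $\ch_1=\ch_2$.

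The main obstacle is only verifying Proposition~\ref{prop:diamond-prop}, which the paper asserts as readily checked. I would check it by noting that single firings have fixed effects, independent of the current configuration. Firing $i$ changes vertex $j$ only by adding $\lfloor a/b\rfloor\geq 0$, so it keeps $j$ legal and keeps all counts nonnegative. The two additive changes commute, so the composition is order-independent. The rest of the argument is the routine induction above.
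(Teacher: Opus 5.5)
Your proof is correct and is essentially the paper's argument. The paper phrases it as a minimal counterexample on $\degch{\ch}$ rather than as strong induction, but it splits into the same two cases ($v_1=w_1$ versus $v_1\neq w_1$) and uses Proposition~\ref{prop:diamond-prop} and the strict chip-count decrease in the same way; your check of the diamond property, including the observation that firing $i$ only adds a nonnegative amount to $j$, is also sound.
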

\begin{proof}
Assume, to get a contradiction, that some nonnegative configuration
$\ch$ on $G$ has more than one $0$-stabilization. Choose such a $\ch$
where $\degch{\ch}$ is minimized. There must exist two sequences of
vertices, say $v_1,v_2,\ldots,v_s$ and $w_1,w_2,\ldots,w_t$
such that: $\phi_{v_1},\ldots,\phi_{v_s}$ is a sequence of legal
firings sending $\ch$ to a $0$-stable $\ch_1$;
$\phi_{w_1},\ldots,\phi_{w_t}$ is a sequence of legal firings
sending $\ch$ to a $0$-stable $\ch_2$; and $\ch_1\neq\ch_2$.
We must have $v_1\neq w_1$. Otherwise, $\ch'=\phi_{v_1}(\ch)=\phi_{w_1}(\ch)$
would be a nonnegative configuration having distinct
$0$-stabilizations $\ch_1$ and $\ch_2$ and where $\degch{\ch'}<\degch{\ch}$,
contradicting the choice of $\ch$.

By Proposition~\ref{prop:diamond-prop}, the configuration $\ch^* =
\phi_{w_1}(\phi_{v_1}(\ch)) = \phi_{v_1}(\phi_{w_1}(\ch))$ can be
obtained by a sequence of two legal firings from $\ch$. Choose
vertices $z_1,\ldots,z_u$ such that $\phi_{z_1},\ldots,\phi_{z_u}$ is
a sequence of legal firings sending $\ch^*$ to a $0$-stable $\ch_3$.
Now, $\ch_1$ and $\ch_3$ are both $0$-stabilizations of $\phi_{v_1}(\ch)$,
which has smaller total chip count than $\ch$; so $\ch_1=\ch_3$.
Also, $\ch_2$ and $\ch_3$ are both $0$-stabilizations of $\phi_{w_1}(\ch)$,
which has smaller total chip count than $\ch$; so $\ch_2=\ch_3$.
We have reached the contradiction $\ch_1=\ch_2$.
\end{proof}

\begin{remark}\label{rem:int-kstab-uniq}
In the model of~\S\ref{subsec:int-chip-model}, the following version
of the diamond property is true and easy to check.
Given $\ch\geq 0$ and distinct nonempty subsets $A$ and $B$ such that
$\phi_A$ and $\phi_B$ are both legal for $\ch$, 
$\phi_{B\setminus A}$ is legal for $\phi_A(\ch)$,
$\phi_{A\setminus B}$ is legal for $\phi_B(\ch)$,
and $\phi_{B\setminus A}(\phi_A(\ch))=\phi_{A\setminus B}(\phi_B(\ch))$.
The technique used to prove Theorem~\ref{thm:unique0} adapts to
prove the uniqueness of $k$-stabilizations (for any $k$) in 
this integral chip-firing model.
\end{remark}

\begin{remark}\label{rem:real}
Here is an even more general version of our quantized chip-firing model.
We specify nonnegative real weights $w_{ij}=w_{ji}$
for all distinct $i,j\in V=\{0\}\cup [n]$, subject to the condition
that $w_{i0}\geq 1$ for all $i\in [n]$. For any $S\subseteq [n]$
and configuration $\ch$, $\phi_S(\ch)$ is obtained from $\ch$
by decreasing $\ch(i)$ by $\lfloor w_{i0}+\sum_{j\in \compl{S}} w_{ij}\rfloor$
for each $i\in S$ and increasing $\ch(j)$ by
$\lfloor \sum_{i\in S} w_{ij}\rfloor$ for all $j\in\compl{S}$.
Proposition~\ref{prop:fin} and the other results of this section
readily extend to this setting.
\end{remark}

Although the general model is appealing, we need to impose some
homogeneity hypotheses to obtain strong enumerative results.
So, for the rest of this article, we restrict to the special case
of the quantized chip-firing model already discussed 
in~\S\ref{subsec:rat-chip-model1}, namely: 
$c=1$, $\gcd(a,b)=1$, and $G$ is the complete graph on
$\{0\}\cup [b]$. In this setting, the superstabilization of $\ch\geq 0$
is unique. We will deduce this as a special case of the theory
of $k$-skeletal chip configurations. To develop that theory,
we first study the analogous results for lattice paths.

\section{\texorpdfstring{$k$}{k}-Skeletal Lattice Paths}
\label{sec:skel-paths}

In this section, we develop a theory of $k$-skeletal lattice paths
(unlabeled and labeled) with no specific reference to chip-firing. The
next section will prove the connection to $k$-skeletal configurations.
We assume the definitions from~\S\ref{subsec:kskel-path-intro}, but otherwise
this section is self-contained. 
Our goal is to prove Theorem~\ref{thm:main-rat}.
Throughout, we fix coprime positive integers $a,b$.

\subsection{Levels and Cyclic-Shift Equivalence Classes}
\label{subsec:lev-cyc}

Define the \emph{level} of a lattice point $v=(x,y)\in\Z^2$
(relative to the coprime pair $(a,b)$ or the line $bx=ay$)
to be $\lvl(v)=ay-bx$. Points on the line $bx=ay$ have level $0$;
points above or left of this line have positive levels; and
points below or right of this line have negative levels.
Moving north from $(x,y)$ to $(x,y+1)$ increases the level by $a$,
while moving east from $(x,y)$ to $(x+1,y)$ decreases the level by $b$.
The following lemma follows routinely from the hypothesis
$\gcd(a,b)=1$ (cf.~\cite[Thm.~12.1]{loehr-comb}).

\begin{lemma}\label{lem:levels}
All lattice points in the rectangle $[0,a]\times [0,b]$ have
distinct levels, except $(0,0)$ and $(a,b)$ both have level 0.
\end{lemma}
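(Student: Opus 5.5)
The plan is to show that two distinct lattice points of the rectangle $[0,a]\times[0,b]$ with equal level must be $(0,0)$ and $(a,b)$. Suppose $v=(x,y)$ and $v'=(x',y')$ are lattice points with $0\le x,x'\le a$, $0\le y,y'\le b$, and $\lvl(v)=\lvl(v')$. Then $ay-bx=ay'-bx'$, which rearranges to
\[ a(y-y') = b(x-x'). \]
Since $\gcd(a,b)=1$, the number $a$ divides $b(x-x')$ and hence divides $x-x'$. Because $|x-x'|\le a$, this forces $x-x'\in\{-a,0,a\}$.

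We then split into cases on the value of $x-x'$.

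\begin{itemize}
\item If $x-x'=0$, then $a(y-y')=0$, so $y=y'$ and the two points coincide.
\item If $x-x'=a$, then $x=a$, $x'=0$, and $a(y-y')=ab$ gives $y-y'=b$. With $0\le y,y'\le b$ this forces $y=b$ and $y'=0$, so $v=(a,b)$ and $v'=(0,0)$.
\item If $x-x'=-a$, the same argument with the roles of $v$ and $v'$ swapped gives $v=(0,0)$ and $v'=(a,b)$.
\end{itemize}

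Finally, we check directly that $\lvl(0,0)=0$ and $\lvl(a,b)=ab-ba=0$, so these two points really do share level $0$. This confirms that they are the only coincidence.

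There is no genuine obstacle here. The only point needing care is using the box bounds $|x-x'|\le a$ and $0\le y,y'\le b$ to pin down the multiples exactly once divisibility has been established.
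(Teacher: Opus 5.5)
Your proof is correct and complete. The paper gives no explicit proof and only remarks that the lemma follows routinely from $\gcd(a,b)=1$, citing a textbook. Your argument — $a(y-y')=b(x-x')$ forces $a\mid x-x'$, and the box bounds then leave only the coincidence of $(0,0)$ and $(a,b)$ — is precisely that routine argument.
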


In particular, the levels of the lattice points reached by any path
$Q\in\mcP(\N^b\E^a)$ are all distinct, except $(0,0)$ and $(a,b)$
both have level $0$.
Because of this, we can specify cyclic shifts of $Q$ by
writing $Q_{\lvl(v)}$ instead of $Q_v$. In other words, if $Q$ visits
a (necessarily unique) lattice point at level $\ell\neq 0$, then $Q_{\ell}$ 
is the cyclic shift of $Q$ obtained by moving this lattice point
to the origin. Also write $Q_0=Q$.

Next, define an equivalence relation $\sim$ on $\mcP(\N^b\E^a)$ 
by letting $P\sim Q$ mean that $Q$ is a cyclic shift of $P$.
Define a statistic ${\min:\mcP(\N^b\E^a)\rightarrow\Z_{\leq 0}}$ by letting
$\min(Q)$ be the minimum level of any lattice point visited by $Q$.
The following observations are readily checked. Note that part~(\ref{lem:rat-facts:equiv})
follows from part~(\ref{lem:rat-facts:min}) and Lemma~\ref{lem:levels}.

\begin{lemma}\label{lem:rat-facts}
Suppose $Q\in\mcP(\N^{\,b}\E^{\,a})$ and $v$ is a lattice point on $Q$ of level 
$\ell=\lvl(v)$.
\begin{enumerate}
 \item $Q$ is an $(a,b)$-Dyck path if and only if $\min(Q)=0$.
 \item $\min(Q_v)=\min(Q_{\ell})=\min(Q)-\ell$. \label{lem:rat-facts:min}
 \item The equivalence class of $Q$ has size $a+b$.\label{lem:rat-facts:equiv}
 \item Every equivalence class of $\sim$ contains exactly one $(a,b)$-Dyck path.
 \label{lem:rat-facts:d}
\end{enumerate}
\end{lemma}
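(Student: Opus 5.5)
We prove the four parts of Lemma~\ref{lem:rat-facts} in the order (1), (2), (3), (4), since each later part uses the earlier ones together with Lemma~\ref{lem:levels}.

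\textbf{Part (1).} The starting points of north steps are exactly the lattice points of $Q$ from which a north step begins. If $\min(Q)=0$, then every visited point has level $\geq 0$, so every north step starts on or above $bx=ay$. Conversely, suppose some visited point has negative level, and take the first such point along the path. It is reached by an east step, because north steps raise the level. From there the path must eventually return to level $0$ at $(a,b)$, and only north steps raise the level. So some north step starts at a point of negative level: take the first north step after the last negative point, or simply any north step leaving a negative-level point, which exists because the final point $(a,b)$ has level $0$. Hence $Q$ is not Dyck.

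\textbf{Part (2).} The shifted path $Q_v$ traverses the same steps as $Q$, re-based so that $v$ goes to the origin. Since level changes depend only on the steps, a point $w$ of $Q$ between $v$ and $(a,b)$ corresponds to a point of $Q_v$ with level $\lvl(w)-\ell$. A point $w$ between $(0,0)$ and $v$ corresponds to a point with level $\lvl(w)+\lvl((a,b))-\ell=\lvl(w)-\ell$, because the total level change along $Q$ is $ab-ba=0$. Thus the multiset of levels of $Q_v$ is that of $Q$, with the duplicate endpoint handled correctly, shifted by $-\ell$. This gives $\min(Q_v)=\min(Q)-\ell$.

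\textbf{Part (3).} The cyclic shifts of $Q$ are $Q_v$ for the $a+b$ points $v$ of $Q$ other than $(a,b)$; shifting at $(a,b)$ gives $Q$ again. By Lemma~\ref{lem:levels} these $a+b$ points have distinct levels, so by part (2) the shifts have distinct values of $\min$ and are therefore distinct paths. The class is closed under shifting, since a shift of a shift is a shift, so its size is exactly $a+b$.

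\textbf{Part (4).} By parts (1) and (2), $Q_v$ is Dyck exactly when $\lvl(v)=\min(Q)$. By Lemma~\ref{lem:levels} there is exactly one such $v$ among the $a+b$ points. Since each class is the set of shifts of any of its members, each class contains exactly one Dyck path.

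The only mildly delicate step is the bookkeeping in (2): confirming that the wraparound contributes no extra offset. This holds because $\lvl(a,b)=0$.
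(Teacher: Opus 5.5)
Your proof is correct and follows essentially the route the paper intends: the paper leaves these facts as ``readily checked'' and notes only that part~(\ref{lem:rat-facts:equiv}) follows from part~(\ref{lem:rat-facts:min}) and Lemma~\ref{lem:levels}, which is exactly your distinct-$\min$ argument, and part~(\ref{lem:rat-facts:d}) is the standard cyclic-lemma consequence you give. One small tidy-up: in the first part, the wording ``first north step after the last negative point'' is ambiguous, and the clean witness is the step leaving the last negative-level point of $Q$, which must be a north step because it lands at a point of nonnegative level.
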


Since the set $\mcP(\N^b\E^a)$ of size $\binom{a+b}{a,b}$ has been written
as the disjoint union of equivalence classes that each have size $a+b$,
there are $\Cat_{a,b}=\frac{1}{a+b}\binom{a+b}{a,b}$ such equivalence
classes. Part~(\ref{lem:rat-facts:d}) 
of Lemma~\ref{lem:rat-facts} implies that the
number of $(a,b)$-Dyck paths is $\Cat_{a,b}$.
Theorem~\ref{thm:main-rat}(\ref{thm:main-rat:uniq}) can now be rephrased 
as the following equivalent statement.

\begin{prop}\label{prop:main-rat}
For fixed $k$ with $0\leq k<b$, 
each equivalence class of $\sim$ contains exactly one $k$-skeletal path.
\end{prop}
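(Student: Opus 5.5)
The plan is to fix an equivalence class and parametrize its members by levels relative to its unique Dyck path. By Lemma~\ref{lem:rat-facts}(\ref{lem:rat-facts:d}) the class contains exactly one $(a,b)$-Dyck path $D$, and $\min(D)=0$. Let $L$ be the set of levels of lattice points on $D$. By Lemma~\ref{lem:levels}, $L$ consists of $a+b$ distinct nonnegative integers, where $0$ is counted once even though it occurs at both $(0,0)$ and $(a,b)$. Every member of the class is a cyclic shift $D_\ell$ with $\ell\in L$. Since the class has size $a+b$ by Lemma~\ref{lem:rat-facts}(\ref{lem:rat-facts:equiv}), the map $\ell\mapsto D_\ell$ is a bijection from $L$ onto the class.

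The first step is a bookkeeping lemma on composing shifts. If $Q=D_\ell$, then a lattice point of $D$ at level $m$ corresponds to a lattice point of $Q$ at level $m-\ell$. This holds because translating the start point to the origin subtracts $\ell$ from every level, and the wrap-around piece is unaffected since the endpoint $(a,b)$ has level $0$. Consequently, for a point $v$ on $Q$ at level $m-\ell$, the shift $Q_v$ equals $D_m$. In particular, the points $v$ on $Q$ strictly above $bx=ay$ are exactly those coming from levels $m\in L$ with $m>\ell$, and the corresponding shifts $Q_v$ are exactly the paths $D_m$ with $m\in L$, $m>\ell$. I expect this identification to be the main (though routine) obstacle. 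Care is needed that the point $(0,0)\equiv(a,b)$ of $D$ is handled correctly (it has level $-\ell\le 0$ in $Q$, so it is never among the relevant $v$), and that distinct $m$ give distinct paths, which follows from the bijection above.

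Next, define $A=\{\ell\in L : D_\ell \text{ satisfies (R1)}\}$. This set is nonempty: $D_0=D$ is a Dyck path, so all of its north steps start weakly above $bx=ay$, and in particular its last $k+1$ do. Since $L$ is finite, $A$ has a maximum. By the bookkeeping lemma, condition~(R2) for $D_\ell$ says precisely that $D_m$ fails~(R1) for every $m\in L$ with $m>\ell$, that is, that no element of $A$ exceeds $\ell$.

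Therefore $D_\ell$ is $k$-skeletal if and only if $\ell\in A$ and no element of $A$ is larger than $\ell$, that is, if and only if $\ell=\max A$. Hence the class contains exactly one $k$-skeletal path, namely $D_{\max A}$, which proves the proposition and with it Theorem~\ref{thm:main-rat}(\ref{thm:main-rat:uniq}). The argument never uses the specific form of~(R1) beyond the fact that $D$ satisfies it, which is what makes the proof uniform in $k$.
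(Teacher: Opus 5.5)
Your proof is correct and follows essentially the same route as the paper. The paper totally orders the class by $\min(P_\ell)=-\ell$ and proves that the shifts $Q_w$ with $\lvl(w)>0$ are exactly the paths below $Q$ in that order, which is your bookkeeping lemma; it then takes the $>_T$-smallest path satisfying (R1), which is your $D_{\max A}$.
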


We prove this proposition in the next subsection.
Once this is done, we get a bijection from the set of $k$-skeletal paths
to the set of $(a,b)$-Dyck paths by mapping a $k$-skeletal path $Q$
to the unique $(a,b)$-Dyck path $P$ such that $P\sim Q$,
namely $P=Q_{\min(Q)}$.  This gives a bijective proof 
of Theorem~\ref{thm:main-rat}(\ref{thm:main-rat:cat}).

\subsection{Proof of Proposition~\ref{prop:main-rat}}
\label{subsec:proof-main-rat}

Fix an equivalence class $T$ of $\sim$, and let $P$ be the unique
$(a,b)$-Dyck path belonging to this equivalence class.
Let $0=\ell_1<\ell_2<\cdots<\ell_{a+b}$ be the $a+b$ distinct
levels visited by $P$. We totally order $T$ by minimum level:
\[ T=\{P=P_{\ell_1}>_T P_{\ell_2}>_T\cdots>_T P_{\ell_{a+b}}\},
\quad\mbox{where $\min(P_{\ell})=-\ell$.} \]

\begin{lemma}
For any path $Q$ of the form $P_{\ell_i}$, 
\[ \{Q_w:\text{$w$ is on $Q$ and $\lvl(w)>0$}\} =\{R\in T: Q>_T R\}. \]
\end{lemma}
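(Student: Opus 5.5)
The plan is to show that both sets equal the set of paths in $T$ with minimum level strictly larger in magnitude than that of $Q$, using Lemma~\ref{lem:rat-facts}(\ref{lem:rat-facts:min}) and Lemma~\ref{lem:levels} to translate between cyclic shifts and levels. Write $Q=P_{\ell_i}$, so $\min(Q)=-\ell_i$. By definition of the order $>_T$, the set $\{R\in T: Q>_T R\}$ is $\{P_{\ell_j}: j>i\}$, i.e.\ the paths in $T$ whose minimum level is strictly less than $-\ell_i$. Since the $\ell_j$ are distinct and $\min(P_{\ell_j})=-\ell_j$, this is the same as the set of $R\in T$ with $\min(R)<\min(Q)$.

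For the inclusion $\subseteq$: let $w$ be a lattice point on $Q$ with $\lvl(w)=\ell>0$. Then $Q_w\sim Q$, so $Q_w\in T$. By Lemma~\ref{lem:rat-facts}(\ref{lem:rat-facts:min}), $\min(Q_w)=\min(Q)-\ell<\min(Q)$. Hence $Q>_T Q_w$.

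For $\supseteq$: let $R\in T$ with $\min(R)<\min(Q)$. Since $R$ is a cyclic shift of $Q$, we have $R=Q_w$ for some lattice point $w$ on $Q$. If $w$ is $(0,0)$ or $(a,b)$, then $R=Q$, contradicting $\min(R)<\min(Q)$. Otherwise $w$ has a well-defined nonzero level $\ell$, and Lemma~\ref{lem:rat-facts}(\ref{lem:rat-facts:min}) gives $\min(R)=\min(Q)-\ell$. The inequality $\min(R)<\min(Q)$ then forces $\ell>0$, so $R$ lies in the left-hand set.

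The only delicate point is the bookkeeping that each element of $T$ is realized as $Q_w$ for some $w$ on $Q$, and that $Q_w$ is determined by $\lvl(w)$. Both follow from Lemma~\ref{lem:levels}, which says the $a+b$ lattice points of $Q$ other than the endpoint have distinct levels. Lemma~\ref{lem:rat-facts}(\ref{lem:rat-facts:equiv}) then says these shifts exhaust the class $T$ without repetition. With those facts in hand, the argument is a direct translation and I expect no real obstacle.
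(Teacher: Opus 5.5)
Your proof is correct and follows essentially the same approach as the paper: the forward inclusion is identical, using $\min(Q_w)=\min(Q)-\lvl(w)$. For the reverse inclusion, the paper explicitly takes $w$ to be the vertex of $Q$ at level $\ell_j-\ell_i>0$, whereas you take any $w$ with $R=Q_w$ and deduce $\lvl(w)>0$ from the same $\min$ formula; this is a trivial variation.
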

\begin{proof}
To prove one set inclusion, suppose $w$ is on $Q$ and $\lvl(w)>0$.
Then $Q_w\sim Q\sim P$ belongs to $T$ and $\min(Q_w)=\min(Q)-\lvl(w)<\min(Q)$,
so $Q >_T Q_w$. To prove the other set inclusion, 
consider $R\in T$ with $Q>_T R$, say $R=P_{\ell_j}$ with $j>i$.
If we let $w$ be the vertex on $Q$ with level $\ell_j-\ell_i>0$, then $R=Q_w$.
Such a vertex does exist, 
since $P$ has a vertex at level $\ell_j$ which gets moved
to level $\ell_j-\ell_i$ in $Q$ and moved to level $0$ in $R$.
\end{proof}

For any $Q\in\mcP(\N^b\E^a)$, define $\pos(Q)=\pos_{a,b}(Q)$ to be the maximum
number of north steps (scanning backwards from the end of $Q$ and not
skipping any north steps, but allowing intervening east steps)
that start at vertices with nonnegative level. We can now 
reformulate the definition of $k$-skeletal paths. A path
$Q\in\mcP(\N^b\E^a)$ is $k$-skeletal for the coprime pair $(a,b)$
if and only if these two conditions hold:
\begin{itemize}
\item[(R1$'$)] $\pos(Q)>k$.
\item[(R2$'$)] For all $R\sim Q$ with $Q>_T R$, $\pos(R)\leq k$.
\end{itemize}
In (R2$'$), $T$ is the equivalence class of $\sim$ containing $Q$.

Proposition~\ref{prop:main-rat} is now easy to prove.
Given $k$ with $0\leq k<b$ and an equivalence class $T$ of $\sim$,
the unique $k$-skeletal path in $T$ is the smallest $Q\in T$ (relative
to the total ordering $>_T$) satisfying $\pos(Q)>k$. Such a $Q$ must 
exist, since the $(a,b)$-Dyck path $P$ in $T$ has $\pos(P)=b>k$.

\begin{example}
Take $a=8$, $b=5$, and $P=\N\E\N\E\E\N\N\E\E\N\E\E\E$.
\begin{figure}[ht]
  {\scalebox{0.5}{\includegraphics{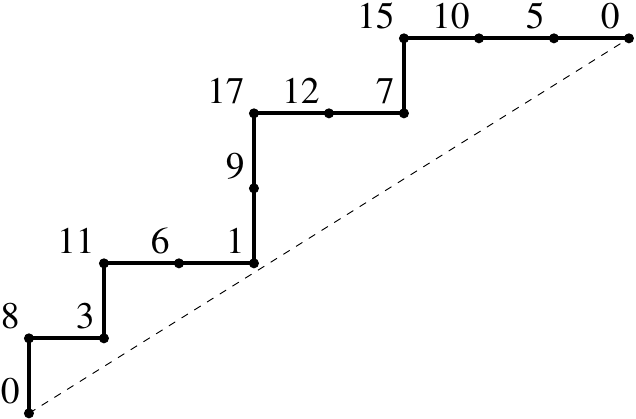}}}
  \caption{An $(8,5)$-Dyck path with levels of lattice points marked.}
  \label{fig:find-skel}
\end{figure}
Figure~\ref{fig:find-skel} shows $P$ with the level of each 
lattice point $v$ on $P$ marked. 
We list each path $P_{\ell}$ in the equivalence class of $P$ (totally ordered 
by $>_T$) and the value of $\pos(P_{\ell})$.
\begin{center}
\begin{tabular}{lccccccccccccc}\toprule
\text{shift of $P$:} &
$P_0$ & $P_1$ & $P_3$ & $P_5$ & $P_6$ & $P_7$ & $P_8$ & $P_9$ & $P_{10}$ & $P_{11}$
& $P_{12}$ & $P_{15}$ & $P_{17}$ \\\midrule
\text{$\pos(P_{\ell})$:} & 
$5$ & 1 & 0 & 2 & 0 & 1 & 0 & 0 & 0 & 0 & 0 & 0 & 0\\\bottomrule
\end{tabular}
\end{center}

The unique $k$-skeletal path in this equivalence class is 
$P=P_0$ for $k\in\{2,3,4\}$, $P_5$ for $k=1$, and $P_7$ for $k=0$.
\end{example}

\subsection{Labeled \texorpdfstring{$(a,b)$}{(a,b)}-Paths}
\label{subsec:label-rat}

Given a path $Q\in\mcP(\N^b\E^a)$, a \emph{run} of north steps
in $Q$ is a maximal string of consecutive north steps in $Q$.
Let $\run(Q)$ be the \emph{run multiset of $Q$}, which is the multiset
of lengths of all such runs of north steps in $Q$.
Recall that we convert $Q$ to a \emph{labeled path}
by attaching a permutation of $[b]$ to the $b$ north steps of $Q$
where the labels in each run of north steps increase from bottom to top.
Let $\mcLP(\N^b\E^a)_E$ be the set of labeled paths from $(0,0)$ to
$(a,b)$ that end in an east step. 
All labeled $(a,b)$-Dyck paths --- in fact, all $k$-skeletal labeled paths ---
belong to this set, by condition~(R1).

View the integers modulo $a$ as the set 
of remainders $\Z_a=\{0,1,2,\ldots,a-1\}$.
The set of chip configurations $\ch:[b]\rightarrow\Z$ 
where $0\leq \ch(i)<a$ for all $i\in [b]$ can be identified
with the product set $\Z_a^b$. The encoding of functions by labeled paths 
(\S\ref{subsec:kskel-path-intro})
restricts to a bijection from $\Z_a^b$ to $\mcLP(\N^b\E^a)_E$.
We introduce an equivalence relation $\equiv$ on each of these sets, 
as follows. For $\ch,\ch'\in\Z_a^b$, let $\ch\equiv\ch'$ mean
that for some $j\in\Z_a$, for all $x\in [b]$, $\ch'(x)=(\ch(x)+j)\bmod a$. 

When $\ch'$ is related to $\ch$ by this formula, we obtain
$\lpath(\ch')$ from $\lpath(\ch)$ by cyclically shifting $\lpath(\ch)$
(viewed as a sequence of east steps and labeled north steps) so that
the $(j+1)$th east step from the end of $\lpath(\ch)$ gets shifted to
become the last east step of $\lpath(\ch')$. This shifting does not
break any run of consecutive north steps, so the requirement that
labels increase reading up each run is preserved.  Rephrasing in terms
of labeled paths $Q,Q'\in\mcLP(\N^b\E^a)_E$, we can say $Q\equiv Q'$
if and only if $Q'$ is a cyclic shift of $Q$.  In this case,
$\run(Q)=\run(Q')$, so we sometimes speak of $\equiv$ as the
\emph{run-preserving cyclic shift relation} on labeled paths. 
(We use $\sim$ for paths related by arbitrary cyclic shifts and
$\equiv$ for ones related by run-preserving cyclic shifts. When the
paths in question end in east steps there is no ambiguity as any shift
that preserves membership in $\mcLP(\N^b\E^a)_E$ must be
run-preserving.)
We now restate and prove parts (\ref{thm:main-rat:shift}) and
(\ref{thm:main-rat:ab-1}) of Theorem~\ref{thm:main-rat}.
 
\begin{theorem}\label{thm:labeled-rat}
Fix $k$ in the range $0\leq k<b$.
\begin{enumerate}
  \item For every $Q\in\mcLP(\N^{\,b}E^{\,a})_E$, there is exactly one
 $k$-skeletal labeled path $Q'$ with $Q\equiv Q'$.
  \item The number of $k$-skeletal labeled paths
for the coprime pair $(a,b)$ is $a^{b-1}$. In particular,
the number of labeled $(a,b)$-Dyck paths is $a^{b-1}$.
\end{enumerate}
\end{theorem}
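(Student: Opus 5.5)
Part~(1) reduces to Proposition~\ref{prop:main-rat} applied to the unlabeled path. Fix $Q\in\mcLP(\N^b\E^a)_E$ with underlying unlabeled path $U$. The $\equiv$-class of $Q$ consists of the cyclic shifts of $Q$ that end in an east step, carrying labels along. These are exactly the shifts $Q_v$ at lattice points $v$ of $Q$ that are entered by an east step. I will relate them to the $\sim$-class of $U$.

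By Proposition~\ref{prop:main-rat}, exactly one cyclic shift $U_v$ of $U$ is $k$-skeletal. By (R1) with $k\ge 0$, a $k$-skeletal path ends in an east step, since its last north step starts weakly above $bx=ay$ and so cannot end at $(a,b)$. Hence the shift point $v$ is entered by an east step of $U$ (or $v=(0,0)$, using that $U$ ends in $\E$). Therefore $Q_v$ lies in $\mcLP(\N^b\E^a)_E$, is $\equiv Q$, and is $k$-skeletal.

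For uniqueness, suppose two labeled paths $Q',Q''\equiv Q$ are both $k$-skeletal. Their underlying unlabeled paths are cyclic shifts of $U$, hence are $\sim$-equivalent and both $k$-skeletal, so they coincide by Proposition~\ref{prop:main-rat}. By Lemma~\ref{lem:levels}, distinct points of $U$ give distinct shifts, except that $(0,0)$ and $(a,b)$ give the same shift. So the same unlabeled shift arises from the same cut point of $Q$, and the labels agree as well. One subtlety is that a labeled path might equal two different cyclic shifts. This is ruled out by the same level argument: the unlabeled shifts $U_v$ are pairwise distinct for distinct $v$ in $[0,a)\times[0,b]$ positions along the path.

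For part~(2), I will count $\equiv$-classes. Via the bijection $\Z_a^b\to\mcLP(\N^b\E^a)_E$, the relation $\equiv$ is the orbit relation of the action of $\Z_a$ on $\Z_a^b$ by adding $j$ to every coordinate mod $a$. This action is free, since $\ch(1)+j\equiv \ch(1)$ forces $j=0$. So every class has exactly $a$ elements, and there are $a^b/a=a^{b-1}$ classes. By part~(1) each class contains exactly one $k$-skeletal labeled path, so there are $a^{b-1}$ labeled $k$-skeletal paths. The case $k=b-1$ gives labeled Dyck paths, by Example~\ref{ex:skel-vs-dyck}.

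I expect the main care to be needed at the translation in part~(1). There I must check that the $k$-skeletal unlabeled shift corresponds to a \emph{run-preserving} shift, which is what the ``ends in an east step'' observation handles. I must also check that labels transport consistently, meaning runs are not split, so the increasing-labels condition holds. This follows from the remark in the paper that a shift preserving membership in $\mcLP(\N^b\E^a)_E$ is run-preserving.
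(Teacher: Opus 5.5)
Your proposal is correct and follows essentially the same route as the paper. Part~(1) transfers the unique $k$-skeletal unlabeled shift from Proposition~\ref{prop:main-rat} to the labeled path, using the fact that a $k$-skeletal path ends in an east step so the shift is run-preserving. Part~(2) counts the $\equiv$-classes, each of size $a$, among the $a^b$ paths in $\mcLP(\N^b\E^a)_E$. Your explicit freeness argument for the $\Z_a$-action, and your level-based check that distinct cut points give distinct shifts, make precise what the paper states more briefly.
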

\begin{proof}
Given $Q\in\mcLP(\N^b\E^a)_E$, let $P$ be $Q$ with the labels erased.
By Proposition~\ref{prop:main-rat},
there is a unique cyclic shift $P'$ of $P$ (with $P'\sim P$)
that is $k$-skeletal. Since $P'$ must end in an east step, the same
cyclic shift of the labeled path $Q$ preserves runs and converts
$Q$ to a $k$-skeletal labeled path $Q'$ with $Q'\equiv Q$. 
$Q'$ must be unique since $P'$ is. This proves~(a).

Part~(a) says that for each $k$, every equivalence class of
$\equiv$ contains exactly one $k$-skeletal labeled path.
So it suffices to show there are exactly $a^{b-1}$ such equivalence classes.
The set of labeled paths under consideration has size $|\Z_a^b|=a^b$.
Each labeled path is equivalent (under run-preserving cyclic shifts)
to exactly $a$ labeled paths, since any of the $a$ east steps may be
shifted to become the final east step. Thus, each equivalence class
has size $a$, and there are $a^b/a=a^{b-1}$ equivalence classes.
\end{proof}

This completes the proof of Theorem~\ref{thm:main-rat}.

\section{Comparison of \texorpdfstring{$k$}{k}-Skeletal Paths 
and Configurations}
\label{sec:compare-skel}

This section continues to study quantized rational chip-firing
assuming the following setup: $a,b$ are coprime positive integers; $G$
is a complete graph on $\{0\}\cup [b]$; and sink edges have capacity
$c=1$.  Our main result is that the correspondence between chip
configurations $\ch$ and labeled paths $\lpath(\ch)$ preserves the
property of being $k$-skeletal.

\begin{theorem}\label{thm:rat-skel}
Fix $k$ between $0$ and $b-1$.
A nonnegative chip configuration $\ch$ is $k$-skeletal if and only if
$\lpath(\ch)$ is $k$-skeletal. 
\end{theorem}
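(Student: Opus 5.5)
We prove Theorem~\ref{thm:rat-skel} by translating the two defining conditions of a $k$-skeletal configuration ($k$-stability and the borrow condition) into the path conditions (R1$'$) and (R2$'$). Throughout, sort the vertices of $\ch$ from poorest to richest as $v_1,\dots,v_b$ with chip counts $x_1\le\cdots\le x_b$. The $j$th north step of $\lpath(\ch)$ then starts at $(x_j,j-1)$, with level $a(j-1)-bx_j$.

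\textbf{Step 1 ($k$-stability is (R1)).} We first show that $\ch\ge 0$ is $k$-stable if and only if $\upath(\ch)$ ends at $(a,b)$ in an east step and $\pos(\upath(\ch))>k$. An $s$-set $S$ can fire exactly when $\min_{i\in S}\ch(i)\ge 1+\lfloor (b-s)a/b\rfloor$. For a given size $s$, the best candidate is therefore the $s$ richest vertices, so a legal $k$-firing exists iff for some $s\le k+1$ we have $x_{b-s+1}\ge 1+\lfloor (b-s)a/b\rfloor$. For integers this is equivalent to $x_{b-s+1}>(b-s)a/b$, since equality is impossible by coprimality when $0<s<b$; the case $s=b$ reads $x_1\ge 1$. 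In turn this says the north step with index $j=b-s+1$ starts at negative level. Hence $k$-stability is equivalent to the last $k+1$ north steps starting at level $\ge 0$, which is (R1) (and also forces $x_b<a$, so the path ends in an east step). This should be the content of the paper's Lemma~\ref{lem:test-stable}.

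\textbf{Step 2 (borrow moves are cyclic shifts).} Next we show that for $T$ with $|T|=t$, the configuration $\beta_T(\ch)$ is nonnegative and encoded by a path ending at $(a,b)$ exactly when $T$ is the set of $t$ poorest vertices $\{v_1,\dots,v_t\}$ (up to ties, which are harmless for the unlabeled path). In that case $\upath(\beta_T(\ch))$ is the cyclic shift $Q_w$ of $Q=\upath(\ch)$ at the point $w$ where the north step $v_t$ ends, i.e.\ $w=(x,t)$ on $Q$ with $x\in[x_t,x_{t+1}]$, and with $\lvl(w)>0$ required for legality.

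To verify this, compute $\beta_T$. Each $j\in T$ gains $1+\lfloor (b-t)a/b\rfloor$, and each $i\notin T$ loses $\lfloor ta/b\rfloor$. The shift $Q_w$ with $w=(x,t)$ subtracts $x$ from the north steps above $w$ and adds $a-x$ to those below. So the exact claim is that legality plus the resulting path being $k$-stable (or at least ending at $(a,b)$) pins down $x=\lfloor ta/b\rfloor$, the rightmost point at height $t$ with positive level, using $a-\lfloor ta/b\rfloor = 1+\lfloor (b-t)a/b\rfloor$ (coprimality). Legality of $\beta_T$ forces $x_{t+1}\ge\lfloor ta/b\rfloor$. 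I would check the converse direction carefully: every lattice point $w$ on $Q$ of positive level arises this way, possibly after composing several borrow moves.

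\textbf{Step 3 (assembling the equivalence).} With Steps~1 and~2 in hand, the borrow condition (every legal nonempty $\beta_T(\ch)$ is not $k$-stable) becomes: every $Q_w$ with $\lvl(w)>0$ has $\pos\le k$, which is (R2$'$). The expected main obstacle is the mismatch between the two notions. Borrow moves reach only shifts at the specific points $(\lfloor ta/b\rfloor,t)$, whereas (R2) quantifies over all positive-level points, including those in the middle of east runs.

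I would handle this with a monotonicity argument. If $w$ lies on an east run at height $t$ with $\lvl(w)>0$, compare $Q_w$ with $Q_{w'}$, where $w'$ is the rightmost positive-level point of that run. The two shifts differ only by moving some east steps from the end to the beginning, which lowers the levels of all north-step starts. Therefore $\pos(Q_w)>k$ implies $\pos(Q_{w'})>k$. Combined with Step~2, the shift $Q_{w'}$ is realized by a legal borrow yielding a $k$-stable configuration, by Step~1. The reverse implication is immediate from Step~2, since every legal borrow produces some $Q_w$ with $\lvl(w)>0$.
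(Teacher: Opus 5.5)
\textbf{There is a genuine gap in Step~3.} Your Steps~1 and~2 match the paper's Lemma~\ref{lem:test-stable} and Lemmas~\ref{lem:special-borrow} and~\ref{lem:test-skel}. They correctly give the implication ``$\ch$ not $k$-skeletal $\Rightarrow$ $\lpath(\ch)$ not $k$-skeletal.'' The problem is the converse.

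Sliding $w$ east along its run at height $t$ reaches $v_t=(\lfloor ta/b\rfloor,t)$ only when that run actually reaches the diagonal. If instead the run ends at $w'=(x_{t+1},t)$ with $x_{t+1}<\lfloor ta/b\rfloor$, then $Q$ turns north at $w'$. By your own Step~2, $Q_{w'}$ is then not $\upath(\beta_T(\ch))$ for any legal $T$, so the sentence ``the shift $Q_{w'}$ is realized by a legal borrow'' is false.

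This case really occurs. Take $a=7$, $b=3$, $k=0$, $\ch=(0,0,3)$, so $Q=\N\N\E\E\E\N\E\E\E\E$.
\begin{itemize}
\item The point $w=(3,2)$ has level $5$ and is the right end of its east run.
\item $Q_w$ is $0$-stable; its labeled version encodes $(4,4,0)$.
\item However, $v_2=(4,2)$ is not on $Q$.
\item The only legal borrow on $\ch$ is $\beta_{[3]}$, giving $(1,1,4)$, which corresponds to $v_3=(6,3)$.
\end{itemize}
To get from $w$ to $v_3$ you must cross the north step at $x=3$. No step-by-step monotonicity can do that, because raising the base point by one north step lowers every relative level by $a$.

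\textbf{The missing ingredient} is the argument of Lemma~\ref{lem:test-skel1}. It compares $Q_w$ directly with $Q_{v_s}$, where $v_s$ is the first min-level point reached when following $Q$ northeast from $w$ through east \emph{and} north steps. Such a point exists because $Q$ can only reach the diagonal from the left by passing through some $v_s$.

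The argument rests on one fact: $\lvl(v_1),\ldots,\lvl(v_b)$ are exactly $1,\ldots,b$, while every other lattice point of $\Delta$ strictly left of the diagonal has level $>b$. Consequently:
\begin{itemize}
\item the north steps of $Q$ between $w$ and $v_s$ start at levels $>b\ge\lvl(v_s)$;
\item the remaining north steps among the $k+1$ preceding $v_s$ are among the $k+1$ preceding $w$, so they start at levels $\ge\lvl(w)\ge\lvl(v_s)$.
\end{itemize}
Hence $Q_{v_s}$ is $k$-stable, and $v_s$ must be entered by an east step. Only at that point does your Step~2 convert the shift into a legal borrow at the $s$ poorest vertices.

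\textbf{Two smaller points.}
\begin{itemize}
\item Your formula $x=\lfloor ta/b\rfloor$ and the identity $a-\lfloor ta/b\rfloor=1+\lfloor (b-t)a/b\rfloor$ fail at $t=b$. There $\beta_{[b]}$ corresponds to the shift at $v_b=(a-1,b)$, which needs separate (easy) treatment.
\item Your plan to realize shifts ``after composing several borrow moves'' is beside the point, since $k$-skeletality concerns only single borrow moves.
\end{itemize}
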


The counts in parts~(\ref{thm:kskel-chip:ab-1})
and~(\ref{thm:kskel-chip:cat}) of Theorem~\ref{thm:kskel-chip} follow
by combining Theorem~\ref{thm:rat-skel} with
parts~(\ref{thm:main-rat:cat}) and~(\ref{thm:main-rat:ab-1}) of
Theorem~\ref{thm:main-rat}.

We prove Theorem~\ref{thm:rat-skel} in the next four subsections.  In
\S\ref{subsec:analyze-k-stable} we recast the $k$-stability of a chip
configuration in terms of conditions on a lattice path (specifically,
Condition (R1)). In the subsequent two subsections, we significantly
reduce the checking required by Condition (R2) of $k$-skeletal
paths. Section~\ref{subsec:prove-thm-ratskel} ties together these results
to finish the proof of Theorem~\ref{thm:rat-skel}.

Recall from~\S\ref{subsec:kskel-path-intro} the concept of one vertex
being richer (or poorer) than another in a configuration $\ch$.  In
terms of $\lpath(\ch)$, vertex $i$ is poorer than vertex $j$ precisely
when (north-edge) label $i$ appears in a lower row than (north-edge)
label $j$ in the labeled path. In particular, ``the $s$ poorest
vertices of $\ch$'' refers to the labels in the $s$ lowest rows of the
diagram; ``the $s$ richest vertices of $\ch$'' refers to the labels in
the top $s$ rows of the diagram; ``the $s$th richest vertex of $\ch$''
refers to the label in row $s$ from the top; and so on.

We also write $\Delta$ for the triangle in $\R^2$ (including its
interior) with vertices $(0,0)$, $(0,b)$, and $(a,b)$. This is the
light-yellow triangle illustrated in Figure~\ref{fig:tri2}. The line
$x=(a/b)y$ is called the \emph{diagonal} of $\Delta$.

\subsection{Analysis of \texorpdfstring{$k$}{k}-Stability}
\label{subsec:analyze-k-stable}

\begin{lemma}\label{lem:test-stable}
For $0\leq k<b$, a nonnegative chip configuration $\ch$ is $k$-stable
if and only if $\lpath(\ch)$ satisfies Condition~(R1):
namely, the last $k+1$ north steps of $\lpath(\ch)$ start on or above 
the diagonal of $\Delta$.
\end{lemma}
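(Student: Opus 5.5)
The plan is to translate both conditions into a statement about the chip counts of the richest vertices, one value of $s$ at a time, and match them. First fix $s$ with $1\leq s\leq k+1$ and ask when \emph{some} cluster-fire $\phi_S$ with $|S|=s$ is legal for $\ch$. By the firing rule of \S\ref{subsec:rat-chip-model1}, $\phi_S$ only lowers the counts at vertices of $S$, and it lowers each of them by the same amount $1+\lfloor (b-s)a/b\rfloor$; the vertices outside $S$ only gain. Hence $\phi_S$ is legal exactly when $\ch(i)\geq 1+\lfloor (b-s)a/b\rfloor$ for every $i\in S$.

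Consequently some $s$-subset can fire if and only if the $s$ richest vertices all satisfy this bound. Equivalently, the $s$th richest vertex has $\ch(v)\geq 1+\lfloor (b-s)a/b\rfloor$. If it does, take $S$ to be the $s$ richest vertices. Conversely, any legal $S$ contains $s$ vertices meeting the bound, so the $s$th richest one does too. Therefore $\ch$ is $k$-stable if and only if, for every $s$ with $1\leq s\leq k+1$, the $s$th richest vertex $v$ satisfies $\ch(v)\leq \lfloor (b-s)a/b\rfloor$. Since $\ch(v)$ is an integer, this is the same as $\ch(v)\leq (b-s)a/b$.

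Next I translate this into the geometry of $\lpath(\ch)$. By construction, the $s$th richest vertex labels the north step in row $s$ from the top. That step goes from $(\ch(v),b-s)$ to $(\ch(v),b-s+1)$. Its starting point lies on or above the diagonal $x=(a/b)y$ exactly when $\ch(v)\leq (b-s)a/b$, i.e.\ when its level $a(b-s)-b\ch(v)$ is nonnegative. Letting $s$ range over $1,\dots,k+1$ covers exactly the last $k+1$ north steps. So Condition~(R1) for $\lpath(\ch)$ is precisely the inequality family from the previous paragraph, and the lemma follows.

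I do not expect a real obstacle; the only places needing care are edge cases. One is when some $\ch(v)>a$, so that the path overshoots $(a,b)$ on the top line. Then the top north step starts at $x>a\geq (b-1)a/b$, so both sides fail for $s=1$, and the argument goes through unchanged without any special treatment. The other is the reduction from ``some $s$-set fires'' to ``the $s$ richest fire'', which uses that the loss amount depends only on $|S|$. That holds because $G$ is complete, and the richer/poorer tie-breaking convention is irrelevant there.
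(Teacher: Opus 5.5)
Your proof is correct and follows essentially the same route as the paper. For each $s$, you reduce legality of some $s$-element cluster-fire to the $s$th richest vertex having at least $1+\lfloor (b-s)a/b\rfloor$ chips, drop the floor by integrality, read this as the start of the $s$th north step from the top lying weakly left of the diagonal of $\Delta$, and then let $s$ range over $1,\ldots,k+1$.
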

\begin{proof}
Fix $s$ between $1$ and $b$. Let the north step of $\lpath(\ch)$
in the $s$th row from the top start at the lattice point $(x_0,b-s)$.
Note $x_0$ is the chip count at the $s$th richest vertex of $\ch$.
The following conditions are equivalent.
\begin{itemize}
\item There exists an $s$-element subset of $[b]$ that
 can legally fire in configuration $\ch$.
\item The set of $s$ richest vertices of $\ch$ can legally fire.
\item The $s$ richest vertices of $\ch$ each have at least 
 $1+\lfloor (b-s)a/b\rfloor$ chips.
\item The $s$th richest vertex of $\ch$ has at least 
 $1+\lfloor (b-s)a/b\rfloor$ chips.
\item $x_0\geq 1+\lfloor (b-s)a/b\rfloor$.
\end{itemize}
Negating these conditions, we get another list of equivalent conditions:
\begin{itemize}
\item No $s$-element subset of $[b]$ can legally fire in configuration $\ch$.
\item $x_0\leq \lfloor (b-s)a/b\rfloor$.
\item $x_0\leq (b-s)a/b$ (equivalent to the previous item
since $x_0$ must be an integer).
\item $(x_0,b-s)$ is weakly left of the diagonal $x=(a/b)y$.
\item The $s$th north step from the top in $\lpath(\ch)$ starts in $\Delta$.
\end{itemize}
Taking $s=1,2,\ldots,k+1$, we now see that $\ch$ is $k$-stable if
and only if all the north steps in the top $k+1$ rows of $\lpath(\ch)$
start in $\Delta$, which is Condition~(R1).
\end{proof}

Because of this lemma, we call a lattice path $P\in\mcP(\N^b\E^a)$
\emph{$k$-stable} if and only if $P$ satisfies Condition~(R1).
Hence, rewriting conditions (R1) and (R2)
from~\S\ref{subsec:kskel-path-intro}, a lattice path $Q$ (with or
without labels) is $k$-skeletal if and only if 1)~$Q$ is $k$-stable;
and 2)~for each lattice point $v$ on $Q$ strictly above the diagonal
of $\Delta$, the cyclic shift $Q_v$ is not $k$-stable.

\subsection{Min-Level Points}
\label{subsec:min-points}

Define the \emph{min-level points} for parameters $(a,b)$ 
to be the lattice points
\begin{equation}\label{eq:min-lvl}
 v_s=\left(\lfloor sa/b \rfloor,s \right)\quad
 \mbox{for $s=1,2,\ldots,b-1$;}\quad v_b=(a-1,b).
\end{equation}
These are the rightmost lattice points in each row strictly left of
the diagonal $x=(a/b)y$. Because $\gcd(a,b)=1$, the levels of the 
points $v_1,v_2,\ldots,v_b$ must be positive and distinct 
(cf.~Lemma~\ref{lem:levels}). Each such level is at most $b$,
since taking one east step from $v_s$ leads weakly east of the diagonal.
Thus the levels of $v_1,v_2,\ldots,v_b$ are $1,2,\ldots,b$ in some order,
and every other lattice point of $\Delta$ (excluding those on the diagonal)
has level greater than $b$.

The next result reduces the number of lattice points $v$ we must check
in Condition~(R2) when deciding if a lattice path $Q$ is $k$-skeletal
for parameters $(a,b)$ (cf.~\cite[Prop.~3.17]{ratchip1}).

\begin{lemma}\label{lem:test-skel1}
Let $Q$ be a $k$-stable lattice path in $\mcP(\N^{\,b}\E^{\,a})$.
Path $Q$ is not $k$-skeletal if and only if 
there exists a min-level point $v_s$ on $Q$, reached by an east step of $Q$,
such that $Q_{v_s}$ is $k$-stable.
\end{lemma}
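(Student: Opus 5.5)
The "if" direction is immediate. A min-level point $v_s$ has positive level, so it lies strictly above the diagonal $bx=ay$. If $Q_{v_s}$ is $k$-stable, then Condition~(R2) fails for $Q$, so $Q$ is not $k$-skeletal. All the work is in the "only if" direction.

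For the converse, assume $Q$ is $k$-stable but not $k$-skeletal. By the reformulation in \S\ref{subsec:proof-main-rat}, some cyclic shift $R$ with $Q>_T R$ has $\pos(R)>k$. Among all points $w$ on $Q$ with $\lvl(w)>0$ and $Q_w$ $k$-stable, I would choose $w$ with $\lvl(w)$ \emph{minimal}. The goal is to show that this $w$ equals some $v_s$ and is reached by an east step of $Q$. Write $Q_w=AB$, where $B$ is the part of $Q$ from $(0,0)$ to $w$ and $A$ is the part from $w$ to $(a,b)$. In $Q_w$ the levels are those of $Q$ minus $\lvl(w)$. So $k$-stability of $Q_w$ says that each of the last $k+1$ north steps of $Q_w$ starts at a point of $Q$ with level $\geq \lvl(w)$.

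\textbf{Step 1: $w$ is reached by an east step of $Q$.} If instead $w$ were reached by a north step, let $w'$ be the point just below $w$, so $\lvl(w')=\lvl(w)-a$. Suppose $\lvl(w')>0$. Then $Q_{w'}$ is obtained from $Q_w$ by moving one north step from the end to the front, and all levels rise by $a$. I expect $Q_{w'}$ to still be $k$-stable, because the last $k+1$ north steps of $Q_{w'}$ are among those of $Q_w$, now at higher levels. That would contradict the minimality of $w$. Suppose instead $\lvl(w')\le 0$. Then the last north step of $Q_w$ starts at level $\lvl(w')-\lvl(w)<0$, so $Q_w$ is not $k$-stable, again a contradiction. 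Here $k\ge0$ guarantees that the last north step is checked.

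\textbf{Step 2: $w$ is a min-level point.} The fact from \S\ref{subsec:min-points} is that every point of $\Delta$ with positive level that is not among $v_1,\dots,v_b$ has level $>b$. So it suffices to show $\lvl(w)\le b$. Assume $\lvl(w)>b$ and let $u$ be the point just before $w$, so $\lvl(u)=\lvl(w)+b$. It is tempting to shift at the point after $w$ or to move east steps, but east steps lower the level. The better idea is to compare with $w''$, where $w''$ is the lattice point $b$ levels below $w$, i.e.\ one unit east of $w$. That point need not lie on $Q$. So instead I would walk backward along $Q$ from $w$ to find a point $w^*$ with $0<\lvl(w^*)<\lvl(w)$ such that every north step of $Q$ appearing among the last $k+1$ north steps of $Q_{w^*}$ starts at level $\ge\lvl(w^*)$. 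The natural candidate for $w^*$ is the most recent point of $Q$ before $w$ that is the endpoint of an east step and has level in $(0,\lvl(w))$. The needed inequality should come from the fact that the north steps between $w^*$ and $w$ start at levels at least $\lvl(w^*)$, since levels along that stretch only rise through north steps. Combined with the $k$-stability of $Q_w$, this gives $k$-stability of $Q_{w^*}$, contradicting minimality.

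I expect Step~2 to be the main obstacle. One must produce a point of positive level smaller than $\lvl(w)$ that still yields a $k$-stable shift, and check that the set of "last $k+1$ north steps" changes in a controlled way. The key facts to use are that $\lvl(w)>b$ means the step $w\to w+(1,0)$ stays at positive level, and Lemma~\ref{lem:levels}, which ensures the relevant levels are distinct. If the backward walk becomes unwieldy, a fallback is to argue directly: positive-level points of $\Delta$ reached by an east step with level $>b$ have a predecessor $u$ at level $>2b$ or reached by a north step, and then to iterate Step~1's reasoning.
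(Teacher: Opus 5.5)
Your ``if'' direction is fine, but Step~2 has a genuine gap: the backward walk goes in the wrong direction. Whether $Q_{w^*}$ is $k$-stable depends on the $k+1$ north steps of $Q$ immediately \emph{preceding} $w^*$ (scanning backward, wrapping from $(0,0)$ to $(a,b)$). It does not depend on the north steps between $w^*$ and $w$: those come after $w^*$ and sit near the start of $Q_{w^*}$. When $w^*$ lies before $w$, the $k$-stability of $Q_w$ only controls those relevant steps that are also among the $k+1$ north steps preceding $w$. The others are unconstrained.

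Concretely, take $a=5$, $b=3$, $k=0$, and the $0$-stable path $Q=\N\N\E\E\E\N\E\E$. Its lattice points have levels $0,5,10,7,4,1,6,3,0$. The point $w=(2,2)$ has level $4>b$ and is entered by an east step. Also $Q_w$ is $0$-stable, since the north step preceding $w$ starts at level $5\geq 4$. No earlier point of $Q$ ends an east step with level in $(0,4)$. If you allow wrapping, your candidate is $w^*=(4,3)=v_3$, of level $3$. But the north step preceding it starts at $(3,2)$, of level $1<3$, so $Q_{w^*}$ is not $0$-stable. Your argument for this step never uses the minimality of $w$, so this example shows the inference from $k$-stability of $Q_w$ to $k$-stability of $Q_{w^*}$ is simply invalid. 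Your backward ``fallback'' has the same defect.

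The missing idea is to move \emph{forward}, as the paper does. From any witness $w$, scan northeast along $Q$ to the first min-level point $v_s$ (in the example, $v_2=(3,2)$). Until then $Q$ stays at positive level, because it can only drop to level $\leq 0$ via an east step out of a min-level point. So every point from $w$ up to but excluding $v_s$ is a non-min-level point of $\Delta$, and has level $>b\geq\lvl(v_s)$. Each of the $k+1$ north steps preceding $v_s$ therefore either starts in this stretch, or is one of the $k+1$ north steps preceding $w$, whose starting levels are $\geq\lvl(w)\geq\lvl(v_s)$. Hence $Q_{v_s}$ is $k$-stable. This works from an arbitrary witness, so the minimal-level device is unnecessary.

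Your Step~1 then gives the east step, and more simply than you argue. If $w$ is entered by a north step from $w'$, that step is the last north step of $Q_w$ and starts there at level $\lvl(w')-\lvl(w)=-a<0$, whatever the sign of $\lvl(w')$. So your first sub-case is vacuous. That is fortunate, because its justification is wrong: for $k+1<b$, rotating the final north step to the front brings a previously unchecked north step into the last $k+1$.
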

\begin{proof}
Negating the definition of ``$k$-skeletal,'' we see that a $k$-stable path $Q$
is not $k$-skeletal if and only if there exists a point $v$ on $Q$,
with $\lvl(v)>0$, such that $Q_v$ is $k$-stable.
It suffices to prove that when such $v$ exists, there also exists
a point $v_s$ with the properties stated in the lemma.
The $k$-stability of $Q_v$ means that the top $k+1$ north steps
in the shifted path $Q_v$ start at weakly positive levels. Equivalently,
the $k+1$ north steps immediately preceding $v$ on $Q$ (as we scan
southwest along $Q$ starting at $v$, wrapping from $(0,0)$ to $(a,b)$
if needed) all start at levels weakly exceeding $\lvl(v)$.

To find $v_s$, scan northeast along $Q$ from $v$, taking zero or more steps,
until reaching the first min-level point $v_s$ on $Q$. Such a point must
exist, since $v$ is strictly left of the diagonal, $Q$ returns to the
diagonal at $(a,b)$, and $Q$ can only reach the diagonal from the left
side by passing through one of the min-level points. Consider the
$k+1$ north steps preceding $v_s$ on $Q$. Each such north step either
precedes or follows $v$. Such a north step preceding $v$ must also be
one of the $k+1$ north steps mentioned earlier, so its starting level
$\ell$ satisfies $\ell\geq \lvl(v)\geq \lvl(v_s)$. Such a north step following
$v$ also has level $\ell\geq\lvl(v_s)$ since $v_s$ is the earliest min-level
point following $v$ on $Q$, and $Q$ cannot go outside $\Delta$ between
$v$ and $v_s$. These remarks show that $Q_{v_s}$ is $k$-stable.
Finally, $Q$ must reach $v_s$ via an east step, since otherwise the first
north step preceding $v_s$ would start at a level below $\lvl(v_s)$.
\end{proof}

\subsection{Analysis of Borrow Moves}
\label{subsec:analyze-borrow}

To continue, we examine the effect of a borrow move on the labeled path
of a chip configuration $\ch\geq 0$. Fix $s$ with $1\leq s\leq b$. 
Define $E(s)=1+\lfloor (b-s)a/b\rfloor$ and $W(s)=\lfloor sa/b\rfloor$.
Call the line $x=W(s)$ the \emph{critical line} (for borrow moves 
involving $s$ vertices). Let $S$ be an $s$-element subset of $[b]$.
We can execute the borrow move $\beta_S(\ch)$ by acting on the labeled
path of $\ch$ as follows. For each north step labeled by some $i\in S$,
move that north step east by $E(s)$ units.
For each north step labeled by some $j\not\in S$,
move that north step west by $W(s)$ units.  Here and below,
labels always move with the north steps they are attached to.
To finish, we get the labeled path of $\beta_S(\ch)$ by sorting the rows
(based on the new locations of the north steps) so that vertices
appear richest to poorest scanning from the top of the diagram.

For the borrow move $\beta_S$ to be legal on $\ch$, the north steps that 
move west must all start weakly right of the critical line $x=W(s)$. 
Equivalently, $\beta_S$ is legal on $\ch$ if and only if all north steps 
starting strictly left of the critical line have labels in $S$. The next lemma
describes a special situation where doing a borrow move on $\ch$
corresponds to a run-preserving cyclic shift of the associated labeled path.

\begin{lemma}\label{lem:special-borrow}
Suppose a configuration $\ch\geq 0$ is $0$-stable and
$Q=\lpath(\ch)$ arrives at the min-level point $v_s$ by an east step.
Let $S$ be the set of $s$ poorest vertices in $\ch$. Then $S$ can legally
borrow in configuration $\ch$, and $Q_{v_s}=\lpath(\beta_S(\ch))$.
\end{lemma}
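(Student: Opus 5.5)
We prove both claims directly from the geometry of $Q=\lpath(\ch)$. Write $v_s=(W(s),s)$ for $s<b$ and $v_b=(a-1,b)$.

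\emph{Legality.} Since $Q$ passes through $v_s$, the $s$ lowest north steps (the poorest $s$ vertices, i.e.\ the set $S$) all start at $x$-coordinates at most $W(s)$. Since $Q$ reaches $v_s$ by an east step, the remaining $b-s$ north steps start at $x$-coordinates at least $W(s)+1$ when $s<b$. In particular they are weakly right of the critical line $x=W(s)$. By the criterion stated before the lemma, $\beta_S$ is legal. For $s=b$ we have $S=[b]$ and $W(b)=a$, so no north step moves west. The real content is then that the east moves stay within $x\le a$, and this is handled by the next step.

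\emph{The shift.} Compute what $Q_{v_s}$ does to each north step.
\begin{itemize}
\item A north step above $v_s$, starting at $(x,y)$ with $y\ge s$, becomes a step starting at $(x-W(s),\,y-s)$. This is exactly the west move by $W(s)$; the row shift by $-s$ is just re-sorting.
\item A north step below $v_s$, starting at $(x,y)$ with $y<s$, becomes a step starting at $(x+a-W(s),\,y+b-s)$. The required east move is $E(s)=1+\lfloor (b-s)a/b\rfloor$.
\end{itemize}
So the key identity to check is $a-W(s)=E(s)$, that is, $a-\lfloor sa/b\rfloor = 1+\lfloor (b-s)a/b\rfloor$. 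This holds because $sa/b\notin\Z$ for $1\le s<b$ (coprimality), so $\lfloor -sa/b\rfloor=-\lfloor sa/b\rfloor-1$. For $s=b$ we instead have $v_b=(a-1,b)$, and the shift by $a-1$ matches $E(b)=1$ after the wrap-around. The east shift then amounts to $x\mapsto x+1$, which is consistent with $\ch$ being $0$-stable, since that forces $\ch(i)<a$ for all $i$ by Remark~\ref{rem:basic-bound}.

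\emph{Ordering and labels.} The shifted north steps keep their labels, and in $Q_{v_s}$ they appear in the order "old rows $s+1,\dots,b$ then old rows $1,\dots,s$". We must check that this order is consistent with the richer/poorer order of $\beta_S(\ch)$, including the tie-break by label within runs. Because the shift is run-preserving (the cut at $v_s$ happens right after an east step, and $Q$ ends in an east step by $0$-stability), no run is split. Moreover, every moved-east step lands strictly to the right of every moved-west step: new $x$-coordinates are at least $E(s)$ versus at most $a-W(s)-1<E(s)$, using $x\le a-1$. Hence $Q_{v_s}$ is a genuine labeled path whose north steps sit at the chip counts of $\beta_S(\ch)$, and it equals $\lpath(\beta_S(\ch))$.

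The main obstacle is the boundary bookkeeping: making sure the top steps above $v_s$ start at $x\le a-1$ (from $0$-stability, $Q$ ends with an east step) so that they land within the triangle, and treating the case $s=b$ separately.
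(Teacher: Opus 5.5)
Your proposal follows the paper's proof essentially step for step: legality via the critical-line criterion, the identity $E(s)+W(s)=a$ for $s<b$, the separation $x_b-W(s)<E(s)\le x_1+E(s)$ using $x_b<a$, run preservation, and a separate check for $s=b$. There is one slip in the legality step, where you have the strict and weak inequalities swapped. Arriving at $v_s$ by an east step gives the strict bound $x_s<W(s)$ on the lowest $s$ north steps, while passing through $v_s$ gives only $x_{s+1}\ge W(s)$, not $\ge W(s)+1$. For example, with $a=8$, $b=5$, configuration $13035$ and $s=2$, the third north step starts exactly at $v_2=(3,2)$. The conclusion still stands, because the legality criterion only needs the west-moving steps to lie weakly right of $x=W(s)$.
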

\begin{proof}
Let the north steps of $Q$, from bottom to top, have $x$-coordinates
$x_1\leq x_2\leq \cdots\leq x_b$, and define $x_{b+1}=a$. 
The assumption that $Q$ arrives at
$v_s=(W(s),s)$ by an east step means that $x_s<W(s)\leq x_{s+1}$.
Since $S$ is the set of labels on the $s$ lowest north steps,
the paragraph preceding the lemma shows that $\beta_S$ is legal for $\ch$.
Executing this borrow move on the path diagram, the $s$ lowest north
steps move to new $x$-coordinates $x_1+E(s)\leq x_2+E(s)\leq\cdots
\leq x_s+E(s)$, while the $b-s$ highest north steps move to new
$x$-coordinates $x_{s+1}-W(s)\leq\cdots\leq x_b-W(s)$. 
Because $\ch$ is $0$-stable, $Q$ must end
in an east step and $x_b<a$ (Lemma~\ref{lem:test-stable}). 

In the special case $s=b$, we have $v_s=(a-1,b)$ and $E(s)=1$,
so the borrow move $\beta_{[b]}$ shifts all north steps of $Q$ right $1$ unit.
On the other hand, $Q_{v_s}$ is $Q$ with the final east step cyclically
shifted to the front. So the lemma holds in this case.

In the main case $1\leq s<b$, $sa/b$ is not an integer 
(otherwise $\gcd(a,b)=1$ forces $b$ to divide $s$). 
So $\lfloor -sa/b\rfloor =-\lfloor sa/b\rfloor-1$ and 
$E(s)+W(s)=a+1+\lfloor -sa/b\rfloor +\lfloor sa/b\rfloor=a$.
Therefore $x_b-W(s)<a-W(s)=E(s)\leq x_1+E(s)$. This means that
when we sort rows to finish executing $\beta_S$ on the path diagram,
the $b-s$ richest vertices in $\ch$ become the $b-s$ poorest vertices
in $\beta_S(\ch)$. More specifically, to get $\lpath(\beta_S(\ch))$,
we move each labeled north step in the top $b-s$ rows of $Q$ west $W(s)$
units and south $s$ units, and we move each labeled north step in the bottom
$s$ rows of $Q$ east $E(s)=a-W(s)$ units and north $b-s$ units. 
This has exactly the same effect as cyclically shifting all steps of $Q$
so that $v_s=(W(s),s)$ gets moved to the origin.
So $Q_{v_s}=\lpath(\beta_S(\ch))$, as needed.
\end{proof}

\begin{example}
Take $a=8$, $b=5$, and $\ch=13035$. The labeled path of $\ch$ 
appears on the left in Figure~\ref{fig:borrow-shift}. This
path arrives at the min-level point $v_2=(3,2)$ by an east step.
We compute $\beta_{\{1,3\}}(\ch)=\ch'=60502$ by adding
$E(2)=1+\lfloor 3(8/5)\rfloor=5$ to $\ch(1)$ and $\ch(3)$ as well as 
subtracting $W(2)=\lfloor 2(8/5)\rfloor=3$ from $\ch(2)$, $\ch(4)$,
and $\ch(5)$. The labeled path of $\ch'$ appears on the right in
the figure. As predicted by the lemma, $\lpath(\ch')$ is obtained
by cyclically shifting $\lpath(\ch)$ so that $v_2$ moves to the origin.
\begin{figure}[ht]
  {\scalebox{0.4}{\includegraphics{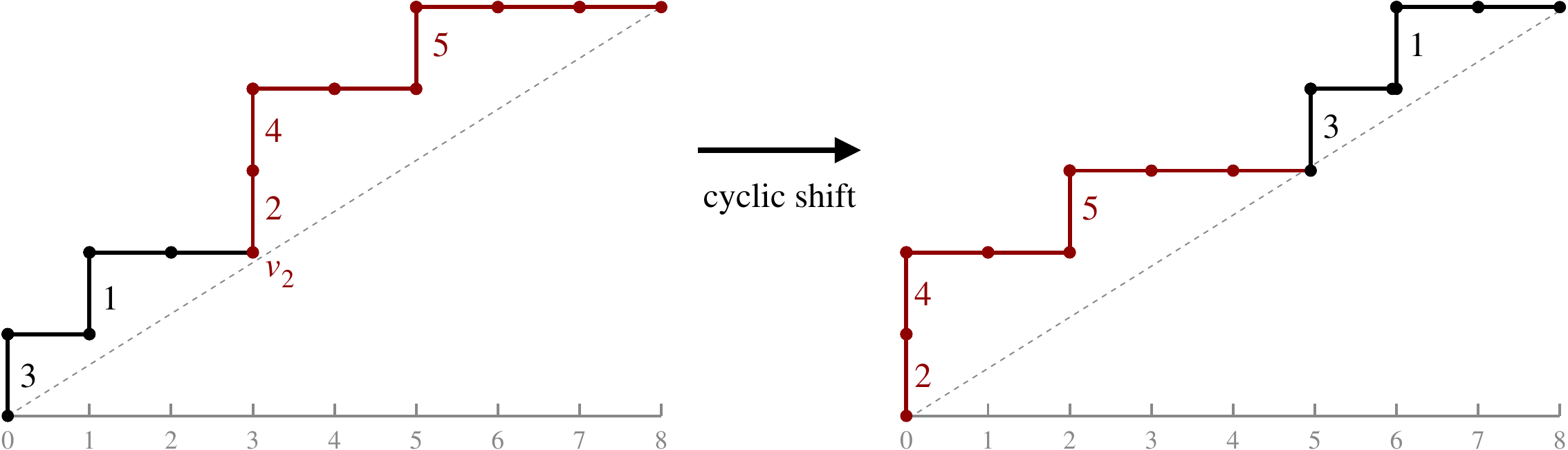}}}
  \caption{Example illustrating Lemma~\ref{lem:special-borrow}.}
  \label{fig:borrow-shift}
\end{figure}
\end{example}

The next lemma shows that, when testing whether a $k$-stable chip
configuration is $k$-skeletal, we need only consider borrow moves
involving the poorest $s$ vertices. In fact, only those values of $s$
satisfying the technical condition in the lemma need be tested.

\begin{lemma}\label{lem:test-skel}
For a $k$-stable chip configuration $\ch$ with labeled path $Q=\lpath(\ch)$,
$\ch$ is not $k$-skeletal if and only if for some $s\in\{1,2,\ldots,b\}$,
$Q$ arrives at the min-level point $v_s$ by an east step, 
$\beta_S$ is legal on $\ch$, and $\beta_S(\ch)$ is $k$-stable, 
where $S$ is the set of $s$ poorest vertices in $\ch$.
\end{lemma}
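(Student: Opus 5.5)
The reverse direction is immediate. If such an $s$ exists, then $\beta_S$ with $S\neq\varnothing$ is a legal borrow move whose result is $k$-stable. By definition, $\ch$ is therefore not $k$-skeletal. The content of the lemma lies in the forward direction.

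For the forward direction, assume $\ch$ is $k$-stable but not $k$-skeletal. Then there is a nonempty $T$, say with $|T|=t$, such that $\beta_T$ is legal on $\ch$ and $\beta_T(\ch)$ is $k$-stable. Since $\ch$ is $k$-stable, it is in particular $0$-stable. So $Q$ ends in an east step and all of its north steps start in $\Delta$, by Lemma~\ref{lem:test-stable}.

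The plan is to pass to the unlabeled path and use Lemma~\ref{lem:test-skel1}, then come back via Lemma~\ref{lem:special-borrow}. Concretely, I would first show that there is a point $v$ on $Q$ with $\lvl(v)>0$ for which $Q_v$ is $k$-stable. Lemma~\ref{lem:test-skel1} then gives a min-level point $v_s$, reached by an east step, with $Q_{v_s}$ $k$-stable. Lemma~\ref{lem:special-borrow} says that for $S$ the $s$ poorest vertices, $\beta_S$ is legal on $\ch$ and $\lpath(\beta_S(\ch))=Q_{v_s}$. This path is $k$-stable, so $\beta_S(\ch)$ is $k$-stable by Lemma~\ref{lem:test-stable}, which is the required conclusion.

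The main obstacle is producing $v$ from the arbitrary legal borrow $\beta_T$. My first attempt is to compare $\beta_T(\ch)$ with the shift at the critical point of row $t$. Legality forces every north step strictly left of $x=W(t)$ to carry a label in $T$, so the path reaches the line $x=W(t)$ at some height $r\le t$. The natural candidate is $v=(W(t),r')$ for a suitable $r'$, or more simply the point where $Q$ first meets the critical line. I would then argue as follows.

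\begin{itemize}
\item In $\beta_T(\ch)$, every vertex of $T$ gains $E(t)$ and every vertex outside $T$ loses $W(t)$.
\item Take the $k+1$ richest vertices of $\beta_T(\ch)$. These satisfy the threshold $x\le (b-j)a/b$ for the $j$th richest, which is the $k$-stability condition.
\item Dominate their positions by the corresponding north steps of $Q_v$: for each $j\le k+1$, show that the $j$th north step from the top of $Q_v$ lies weakly left of the $j$th richest chip count of $\beta_T(\ch)$. Since all north steps of $\beta_T(\ch)$ start in $\Delta$ in the top $k+1$ rows, this domination transfers the condition to $Q_v$.
\end{itemize}

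The domination in the last step is a sorting/exchange argument. Replacing $T$ by the set of $t$ poorest vertices only makes rich vertices poorer and poor vertices richer in a way that does not increase the $j$th largest values. Together with the identity $E(t)+W(t)=a$ for $t<b$, this should make the reduced configuration match $Q_{v_t}$ coordinatewise (up to the exact row where $Q$ crosses the critical line).

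If the direct comparison is awkward, the fallback is to prove $k$-stability of $\beta_{T'}(\ch)$ for $T'$ the $t$ poorest vertices, via a rearrangement inequality on the top $k+1$ values. I would then reduce to the case where $Q$ crosses the critical line exactly at $v_t$ by an east step. Otherwise the crossing point $v$ has positive level and $Q_v$ corresponds to the sorted result, and Lemma~\ref{lem:test-skel1} finishes.
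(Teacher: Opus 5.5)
Your reverse direction is fine. Your observation that legality of $\beta_T$ forces every north step strictly left of the critical line $x=W(t)$ to carry a label in $T$ is exactly half of what is needed: it means $Q$ meets that line at height at most $t$. The chain through Lemma~\ref{lem:test-skel1} and Lemma~\ref{lem:special-borrow} is also logically valid once a suitable $v$ is produced. But producing $v$ is only asserted (``this should make the reduced configuration match $Q_{v_t}$''), and this is a genuine gap.

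The missing idea is the complementary inequality. Since $\beta_T(\ch)$ is $k$-stable, hence $0$-stable, every vertex has fewer than $a$ chips after the borrow. So for $t<b$, no vertex of $T$ can label a north step starting weakly right of $x=W(t)$, since it would end with at least $W(t)+E(t)=a$ chips. Combined with your legality observation, exactly $t$ north steps start strictly left of the critical line, and their labels are exactly $T$. Hence $T$ is the set of $t$ poorest vertices, and $Q$ arrives at $v_t=(W(t),t)$ by an east step. So $T$ itself witnesses the condition with $s=t$, and the detour through cyclic shifts is unnecessary. The case $t=b$ needs separate treatment, which you omit. There $E(b)+W(b)=a+1$, and $v_b=(a-1,b)$ does not lie on $x=W(b)=a$. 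One checks directly that $Q$ must reach $(a-1,b)$ by an east step, or else $\beta_{[b]}(\ch)$ has a vertex with $a$ chips.

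The substitutes you propose do not close the gap as written.
\begin{itemize}
\item Your claim that replacing $T$ by the $t$ poorest vertices ``does not increase the $j$th largest values'' is false for $j>t$. After that replacement, the bottom $b-t$ values are the $b-t$ largest original counts minus $W(t)$, which dominate the corresponding values for any other $T$.
\item The claim does hold for $j\le t$, using $0\le \ch(i)<a$ so that every vertex of $T$ ends richer than every vertex outside $T$. Even then, it transfers $k$-stability only after you note $t\ge k+2$ (otherwise $\phi_T$ is a legal $k$-firing move on $\beta_T(\ch)$). Neither point appears in your sketch.
\item Your ``otherwise'' branch is wrong. When $Q$ first meets the critical line below height $t$, that point need not have positive level. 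For example, with $a=7$, $b=5$, $t=4$, the $0$-stable path $\N\N\N\E\E\E\E\E\N\N\E\E$ first meets $x=5$ at $(5,3)$, which has level $-4$. In fact this case cannot occur at all, by the $W(t)+E(t)=a$ argument above.
\end{itemize}
Without that argument you cannot conclude that $Q$ reaches $v_t$ by an east step, which is part of what the lemma asserts.
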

\begin{proof}
By definition, $\ch$ is not $k$-skeletal if and only if there exists a
nonempty subset $S$ of $[b]$ such that $\beta_S$ is legal on $\ch$ and
$\beta_S(\ch)$ is $k$-stable. It suffices to show that, when such an $S$
exists, it must have the additional properties stated in the lemma.
Say such an $S$ has size $s$. Assuming that $W(s)>0$ and $s<b$ for now,
$Q$ must arrive at the critical line $x=W(s)$ via some
east step ending at some point $(W(s),y_0)$. Consider the three possible cases.
\begin{itemize}
\item[(1)] If $y_0>s$, then $Q$ has more than $s$ north steps starting strictly
left of the critical line. At least one of these north steps must have a
label not in $S$, but that means $\beta_S$ is not legal on $\ch$
(by the paragraph before Lemma~\ref{lem:special-borrow}), 
contrary to choice of $S$.
\item[(2)] If $y_0<s$, then some vertex $v\in S$ must label a north step
starting weakly right of the critical line. After the borrow move at $S$,
the chip count at $v$ is at least $W(s)+E(s)=a$. But that means
$\beta_S(\ch)$ is not $0$-stable and hence not $k$-stable, contrary
to choice of $S$.
\item[(3)] If $y_0=s$, then $Q$ arrives at $v_s=(W(s),s)$ by an east step.
For $\beta_S$ to be legal on $\ch$, the $s$ north steps strictly
left of the critical line must have labels in $S$. But this forces
the $s$-element set $S$ to consist of the $s$ poorest vertices in $\ch$.
\end{itemize}
The situation $W(s)=0$ is handled similarly to the second 
case above. Here $s<b$ and $E(s)=a$, so that the borrow move $\beta_S$
leaves each vertex in $S$ with at least $a$ chips. This means that
$\beta_S(\ch)$ is not $k$-stable, contrary to choice of $S$. 
In the situation $s=b$, we must have $S=[b]$, so $\beta_S$ acts by 
adding $1$ to every vertex's chip count. Here $Q$ (being $k$-stable) must pass 
through $v_s=(a-1,b)$.  If $Q$ arrived at $v_s$ by a north step, 
then $\beta_S(\ch)$ would not be $k$-stable, contrary to choice of $S$. 
So $Q$ arrives at $v_s$ by an east step, and the
conclusion of the lemma is still satisfied.
\end{proof}

\begin{remark}\label{rem:Dhar}
  Lemma~\ref{lem:test-skel} is related to Dhar's Burning Algorithm
  (see~\cite[\S 2.6.7]{klivans} and ~\cite{Dhar}). Dhar's algorithm
  can be used to determine whether or not a chip configuration on an
  arbitrary graph is superstable, i.e., $(b-1)$-stable.
  In~\cite{Backman-bij}, a generalization of Dhar's algorithm was
  provided which applied to arbitrary hereditary chip-firing models.
  The situation of Lemma~\ref{lem:test-skel} is both more and less
  general. It is more general in that it can be used (along with
  Theorem~\ref{thm:rat-skel}) to check whether a chip configuration is
  $k$-skeletal for the rational quantized chip-firing model (not only
  for $k=b-1$, but for any $0\leq k\leq b-1$).  However, it is more
  restrictive in that it only applies to configurations on the
  complete graph. By taking advantage of this symmetry, we arrive at a
  simple test rather than a true multi-stage algorithm.
\end{remark}

\begin{example}\label{ex:ex75c}
We illustrate Lemma~\ref{lem:test-skel} by
revisiting Examples~\ref{ex:ex75a} and~\ref{ex:ex75b}, where $a=7$ and $b=5$.
The left picture in Figure~\ref{fig:not-skel} is $\lpath(22500)$.
This $2$-stable path arrives at the min-level point $v_4=(5,4)$ by an east step,
and the four poorest vertices are $4,5,1,2$. The right picture in the figure
is $\lpath(44022)$, where $44022=\beta_{\{1,2,4,5\}}(22500)$ is $2$-stable.
Thus, the condition in the lemma holds for the non-$2$-skeletal configuration
$22500$. On the other hand, consider $\lpath(44022)$, which arrives at min-level points
$v_1=(1,1)$, $v_3=(4,3)$, and $v_5=(6,5)$ via east steps.
We can check that $\beta_{\{3\}}(44022)$, $\beta_{\{3,4,5\}}(44022)$,
and $\beta_{[5]}(44022)$ are not $2$-stable. By Lemma~\ref{lem:test-skel},
these checks are enough to conclude that $44022$ is $2$-skeletal.
(In fact, since $|\{3\}|\leq 2+1$, $\phi_{\{3\}} = \beta_{\{3\}}^{-1}$
is automatically a legal $2$-firing move on $\beta_{\{3\}}(44022)$,
implying that $\beta_{\{3\}}(44022)$ is not $2$-stable. The same logic
applies to $\{3,4,5\}$, so only the $2$-stability of
$\beta_{[5]}(44022)=55133$ needs to be checked explicitly.)
\begin{figure}[ht]
  {\scalebox{0.4}{\includegraphics{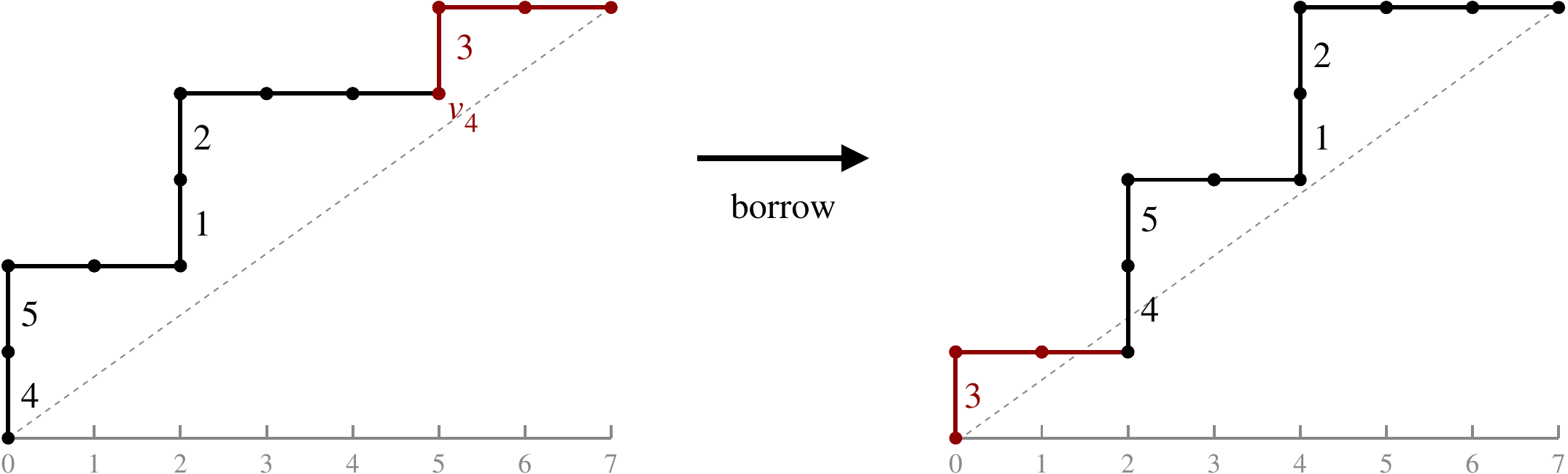}}}
  \caption{The labeled paths for $22500$ and $44022$.}
  \label{fig:not-skel}
\end{figure}
\end{example}

\subsection{Proof of Theorem~\ref{thm:rat-skel}}
\label{subsec:prove-thm-ratskel}

With the previous lemmas in hand, we can quickly prove
Theorem~\ref{thm:rat-skel}. Consider a configuration $\ch\geq 0$
and $Q=\lpath(\ch)$.  First, $\ch$ is $k$-stable if and only if
$Q$ is $k$-stable (Lemma~\ref{lem:test-stable}). For the
rest of the proof, look at the case where $\ch$ and $Q$ are $k$-stable.
Lemma~\ref{lem:test-skel} says that $\ch$ is not $k$-skeletal if and only if
there exist $s$ and $S$ (as described in that lemma)
where $\beta_S$ is legal on $\ch$ and $\beta_S(\ch)$ is $k$-stable.
Because the hypotheses of Lemma~\ref{lem:special-borrow} are met here,
we can replace ``$\beta_S(\ch)$ is $k$-stable'' by
``$Q_{v_s}$ is $k$-stable'' in the previous sentence.
This produces the condition in Lemma~\ref{lem:test-skel1},
which is equivalent to $Q$ not being $k$-skeletal. \qed

\subsection{Duality of \texorpdfstring{$0$}{0}-Skeletal and \texorpdfstring{$(b-1)$}{(b-1)}-Skeletal Objects}
\label{subsec:duality}

This section establishes a duality between $(b-1)$-skeletal
(superstable) configurations and $0$-skeletal configurations.  This
duality is a generalization of a classical duality between
superstable configurations and recurrent configurations which is related to
Riemann-Roch duality (see~\cite{asadichip,baker-norine,Cori-LeBorgne}
and also~\cite{manjunath2013monomials}).  Informally, the
$(b-1)$-skeletal configurations correspond to labeled $(a,b)$-Dyck
paths, while the $0$-skeletal configurations correspond to the same
paths turned upside down.

\begin{figure}[ht]
  {\scalebox{0.3}{\includegraphics{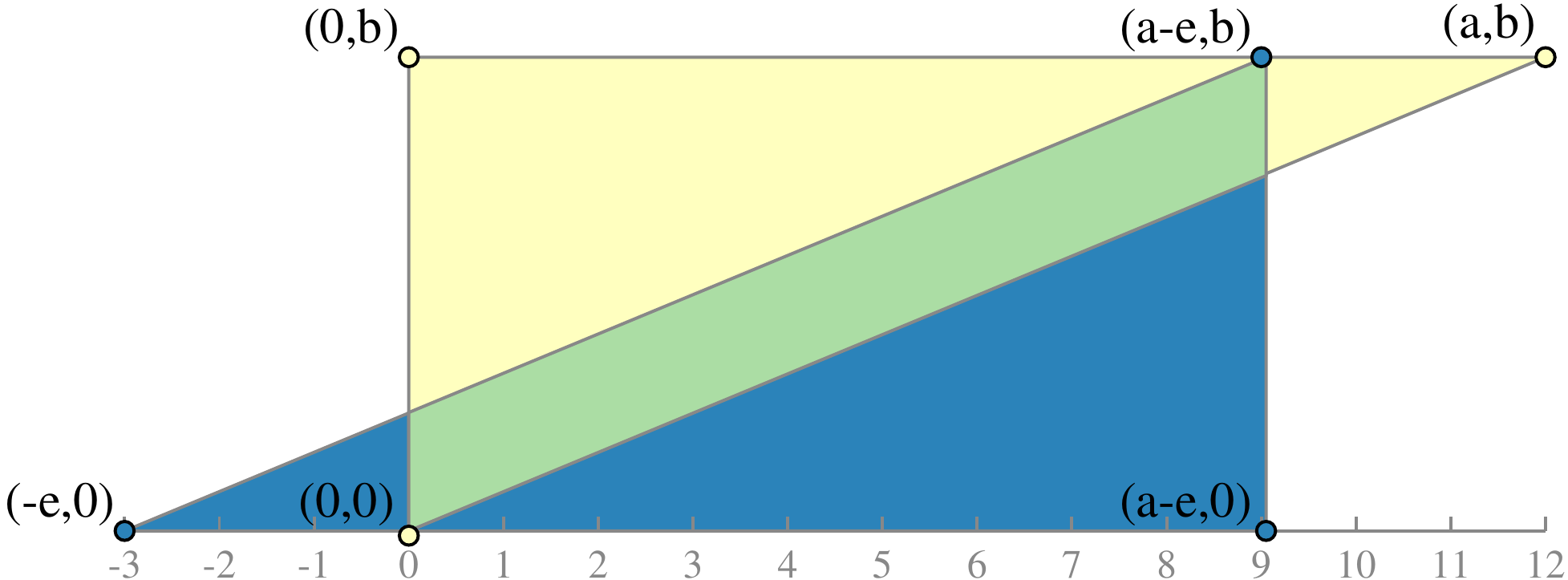}}}
  \caption{The triangle $\Delta$, in light yellow, and the inverted
    triangle $\Delta'$, in dark blue, where $a=12$, $b=5$, and
    $e=1+\lfloor 12/5\rfloor = 3$.}
  \label{fig:tri2}
\end{figure}

We set up some geometric notation for discussing this situation.  Fix
coprime positive integers $a,b$ with $b>1$. Recall $\Delta$ is the
triangle with vertices $(0,0)$, $(0,b)$, and $(a,b)$.  Define
$e=E(b-1)=1+\lfloor a/b\rfloor$, and recall $W(b-1)=a-e$ is the
$x$-coordinate of the min-level point $v_{b-1}$ at height $b-1$.  By
Lemma~\ref{lem:test-stable}, any $0$-stable path from $(0,0)$ to
$(a,b)$ must end in at least $e$ east steps. Next, define $\Delta'$ to
be the triangle with vertices $(-e,0)$, $(a-e,0)$, and $(a-e,b)$,
which can be viewed as $\Delta$ rotated 180 degrees.
Figure~\ref{fig:tri2} shows these two triangles in the case $a=12$,
$b=5$, and $e=3$. Given any $0$-stable path $Q$ from $(0,0)$ to
$(a,b)$, let $\mvl{Q}$ be the lattice path from $(-e,0)$ to $(a-e,b)$
obtained from $Q$ by moving the last $e$ east steps in $Q$ to become
the first $e$ east steps in $\mvl{Q}$. Note that 
$\run(Q)=\run(\mvl{Q})$, so the
operation sending $Q$ to $\mvl{Q}$ also makes sense for labeled paths.
Finally, call a path (with or without labels) from $(-e,0)$ to
$(a-e,b)$ that stays on or in $\Delta'$ an \emph{inverted $(a,b)$-Dyck
path}.

\begin{theorem}\label{thm:duality-paths}
Let $Q\in\mcP(\N^{\,b}\E^{\,a})$ be $0$-stable.
\begin{enumerate}
\item $Q$ is $(b-1)$-skeletal if and only if $Q$ is an $(a,b)$-Dyck path.
\item $Q$ is $0$-skeletal if and only if $\mvl{Q}$ is an inverted
 $(a,b)$-Dyck path.
\item There is a bijection from $(a,b)$-Dyck paths $Q$ to 
 inverted $(a,b)$-Dyck paths $Q^*$, where $Q^*$ is obtained by reversing
 the sequence of steps of $Q$ and starting at $(-e,0)$.
\item Analogues of (a), (b), and (c) hold for labeled paths.
\end{enumerate}
\end{theorem}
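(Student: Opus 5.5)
Part~(a) is essentially Example~\ref{ex:skel-vs-dyck}. For $k=b-1$, Condition~(R1) says every north step of $Q$ starts on or above $bx=ay$, i.e.\ $Q$ is an $(a,b)$-Dyck path. The example already checks that (R2) then holds automatically. So the only step here is to restate that argument; the $0$-stability hypothesis plays no role.

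For part~(b), I will combine Lemma~\ref{lem:test-skel1} with a level computation. By that lemma, $Q$ (which is $0$-stable) fails to be $0$-skeletal iff some min-level point $v_s$ is reached by an east step and $Q_{v_s}$ is $0$-stable. The cyclic shift $Q_{v_s}$ is $0$-stable iff the last north step of $Q_{v_s}$ starts at level $\geq 0$. That north step is the last north step of $Q$ preceding $v_s$, wrapping around if $s$ is small, and its shifted level is its original level minus $\lvl(v_s)$.

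I will translate this into $\mvl{Q}$. This path lives in the coordinates of $\Delta'$, and its levels relative to the line through $(-e,0)$ and $(a-e,b)$ are $\lvl+eb$. The min-level points $v_s$ reached by east steps of $Q$ correspond to the points where $\mvl{Q}$ could cross the hypotenuse of $\Delta'$. The claim to establish is then: some $v_s$ entered by an east step has its preceding north step starting at level $\geq\lvl(v_s)$ iff $\mvl{Q}$ leaves $\Delta'$. I will do this by writing the $i$th north step of $\mvl{Q}$ at $x$-coordinate $x_i-e$. Inverted Dyck means each north step ends at or right of the hypotenuse, i.e.\ $x_i-e\geq -e+ia/b$ roughly, i.e.\ $x_i\geq \lceil ia/b\rceil$ with the appropriate floor shift. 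This is to be compared with the condition ``$x_s<W(s)\le x_{s+1}$ and $x_s\geq$ (level condition)''. For $1\le s<b$ I will use the identity $E(s)+W(s)=a$ from Lemma~\ref{lem:special-borrow} to rewrite the levels. The case $s=b$, where $v_b=(a-1,b)$, is handled separately using that $Q$ ends in at least $e$ east steps.

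I expect this inequality bookkeeping to be the main obstacle. An alternative I would try first is to avoid it through the duality formula of Theorem~\ref{thm:kskel-chip}(b), but that formula is presumably proved from this theorem, so I would do the direct level calculation.

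Part~(c) is a reflection argument. Rotating the plane by $180^\circ$ about the midpoint of $\Delta\cup\Delta'$ maps $\Delta$ onto $\Delta'$ and sends $(0,0)\mapsto(a-e,b)$ and $(a,b)\mapsto(-e,0)$. It reverses the order of steps while preserving the step types, so a path stays in $\Delta$ iff its reversal, started at $(-e,0)$, stays in $\Delta'$. Since reversal is an involution, this map is a bijection.

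For part~(d), labels only travel with north steps. Both $Q\mapsto\mvl{Q}$ and cyclic shifts preserve runs, and the properties in (a) and (b) depend only on the underlying unlabeled path, so those parts transfer verbatim. For (c), reversal reverses each run, so I will relabel each run in increasing order upward, which preserves run multisets. The map therefore remains a bijection between labeled Dyck paths and labeled inverted Dyck paths.
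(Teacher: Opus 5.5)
Your parts (a), (c), (d) are fine and match the paper. Part (b), however, is the real content of the theorem, and you leave it as ``bookkeeping'' with an incorrect setup and without the idea that makes the converse work. Concretely, $\mvl{Q}$ consists of $e$ east steps from $(-e,0)$ to $(0,0)$ followed by $Q$ up to $(a-e,b)$. So its $i$th north step sits at $x=x_i$, not $x_i-e$, and levels relative to the diagonal of $\Delta'$ are $\lvl-eb$. Hence the inverted-Dyck condition is $x_i\ge W(i)+1-e$ for $i<b$, together with $x_b=a-e$. Your target $x_i\ge\lceil ia/b\rceil$ fails at $i=b$ for every $0$-stable $Q$. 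Also, the level condition is an upper bound, not a lower bound: the $s$th north step starts at level $\ge\lvl(v_s)$ iff $x_s\le W(s)-e$ (iff $x_b\le a-1-e$ when $s=b$). So a witness to non-$0$-skeletality at $s<b$ is ``$x_s\le W(s)-e$ and $x_{s+1}\ge W(s)$,'' whereas a violation of the inverted-Dyck condition at index $i$ is only ``$x_i\le W(i)-e$.''

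These two conditions do not correspond index by index. Comparing them, with or without $E(s)+W(s)=a$, gives only the easy direction (witness $\Rightarrow$ $\mvl{Q}$ leaves $\Delta'$). For instance, take $(a,b)=(7,5)$, $e=2$, and north-step coordinates $(0,0,0,3,5)$: index $2$ is a violation, but $x_3<W(2)$, so $v_2$ is not even on $Q$. The missing step is to take the \emph{largest} violating index $i$. In the paper this is the last lattice point of $\mvl{Q}$ outside $\Delta'$, which must be some $v_s-(e,0)$.
If $i=b$, then $s=b$ is a witness. If $i<b$, combine non-violation at $i+1$ with $W(i+1)-W(i)\ge\lfloor a/b\rfloor=e-1$ (or with $x_b=a-e=W(b-1)$ when $i+1=b$). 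This forces $x_{i+1}\ge W(i)$, so $s=i$ is a witness. Geometrically, this is the paper's observation that a north step taken fewer than $e$ east steps after $v_s-(e,0)$ raises the level and would produce a later point outside $\Delta'$. Therefore $Q$ must travel $e$ east steps from there to $v_s$, making $Q_{v_s}$ $0$-stable. Without this maximality and spacing argument, your plan does not prove the converse half of (b).
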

\begin{proof}
Part~(a) follows from Example~\ref{ex:skel-vs-dyck}.
To prove~(b), note first that the $0$-stable path $Q$ must end
in at least $e$ east steps (Lemma~\ref{lem:test-stable}), 
so $\mvl{Q}$ is well-defined. We use the criteria of Lemma~\ref{lem:test-skel1}
to show that $Q$ is not $0$-skeletal if and only if $\mvl{Q}$ does not
stay in $\Delta'$. 

On one hand, if $Q$ is not $0$-skeletal, pick
a min-level point $v_s$ on $Q$, reached by an east step of $Q$,
such that $Q_{v_s}$ is $0$-stable. Using Lemma~\ref{lem:test-stable} again,
the $0$-stability of the cyclic shift $Q_{v_s}$ means precisely that
$v_s$ is preceded on $Q$ by at least $e$ east steps. But $v_s$
is strictly left of the diagonal of $\Delta$ (the line $x=(a/b)y$),
so the point on $Q$ that is $e$ units west of $v_s$ must be 
strictly left of the diagonal of $\Delta'$ (which is the line $x=(a/b)y-e$).
The point in question is also on $\mvl{Q}$, so $\mvl{Q}$ 
does not stay in $\Delta'$.

Conversely, suppose $\mvl{Q}$ does not stay inside $\Delta'$. 
For $s=1,2,\ldots,b$, let $v_s'=v_s-(e,0)$ be the point $e$ units
west of the min-level point $v_s$. The last lattice point outside
$\Delta'$ visited by $\mvl{Q}$ must lie on $Q$ and be one of the points $v_s'$.
$Q$ must proceed from $v_s'$ by taking $e$ east steps to reach $v_s$,
since a north step taken fewer than $e$ east steps after $v_s'$
would take $Q$ to a point of higher level than $v_s'$. Such a point
would be outside $\Delta'$ and contradict the choice of $v_s'$
as the last such point on $\mvl{Q}$. Since $v_s$ is preceded on $Q$ by
at least $e$ east steps, $Q_{v_s}$ is $0$-stable, and $Q$ is not $0$-skeletal.

Part~(c) is geometrically evident (cf.~Figure~\ref{fig:tri2}).
Part~(d) follows since the maps sending $Q$ to $\mvl{Q}$ or $Q^*$ do not
break up any runs of north steps in $Q$. When we reverse a labeled
path $Q$ to get $Q^*$ (carrying labels along with their north steps), 
we must then sort the labels in each column to be increasing from bottom to top.
\end{proof}

There is an analogous duality between superstable chip 
configurations and $0$-skeletal chip configurations.

\begin{theorem}\label{thm:duality-configs}
Let $\ch$ be a $0$-stable chip configuration.
\begin{enumerate}
\item $\ch$ is $(b-1)$-skeletal (superstable)
 if and only if $\lpath(\ch)$ is a labeled $(a,b)$-Dyck path.
\item $\ch$ is $0$-skeletal if and only if $\mvl{\lpath(\ch)}$ 
 is a labeled inverted $(a,b)$-Dyck path.
\item There is a bijection from superstable configurations
 $\ch$ to $0$-skeletal configurations $\ch'$ given by
 $\ch'(i)=a-e-\ch(i)$ for $i\in [b]$.
\end{enumerate}
\end{theorem}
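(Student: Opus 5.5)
Parts (a) and (b) follow by transporting Theorem~\ref{thm:duality-paths} along the dictionary $\ch\mapsto\lpath(\ch)$. Since $\ch$ is $0$-stable, Lemma~\ref{lem:test-stable} shows that $Q=\lpath(\ch)$ is a $0$-stable path from $(0,0)$ to $(a,b)$; in particular $Q$ ends in an east step and all $\ch(i)<a$. By Theorem~\ref{thm:rat-skel}, $\ch$ is $k$-skeletal if and only if $Q$ is $k$-skeletal, for every $k$. Applying this with $k=b-1$ and the labeled version of Theorem~\ref{thm:duality-paths}(a), from part~(d), gives~(a). Applying it with $k=0$ and Theorem~\ref{thm:duality-paths}(b), again in its labeled form, gives~(b).

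For part~(c), we compose the bijection of Theorem~\ref{thm:duality-paths}(c)--(d) with the dictionary. Let $\ch$ be superstable, so $Q=\lpath(\ch)$ is a labeled $(a,b)$-Dyck path. The north step labeled $i$ in $Q$ sits at $x$-coordinate $\ch(i)$. Reversing $Q$ and starting at $(-e,0)$ sends a step at $x$-coordinate $x$ to $x$-coordinate $(a-e)-x$, since reversal is the $180^\circ$ rotation about the midpoint of the segment from $(-e,0)$ to $(a-e,b)$. We then re-sort labels within columns as prescribed in Theorem~\ref{thm:duality-paths}(d). So $Q^*$ has label $i$ at $x$-coordinate $a-e-\ch(i)$, and $Q^*$ is a labeled inverted $(a,b)$-Dyck path.

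It remains to identify $Q^*$ with $\mvl{\lpath(\ch')}$, where $\ch'(i)=a-e-\ch(i)$. Because $0\le\ch(i)\le a-e$ (superstability gives $\ch(i)\le\lfloor (b-1)a/b\rfloor=a-e$), we get $0\le\ch'(i)\le a-e$, so $\ch'\ge0$. The path $\lpath(\ch')$ therefore has its north steps at $x$-coordinates at most $a-e$, and hence ends in at least $e$ east steps. Moving those $e$ final east steps to the front produces a path from $(-e,0)$ to $(a-e,b)$. Its north steps keep their $x$-coordinates $\ch'(i)$, with the poorer-to-richer labeling, which is exactly $Q^*$. The one point I expect to need care is this last identification: matching the within-column label ordering of $Q^*$ with the tie-breaking convention of $\lpath$. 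Both orderings increase upward within a run, so they agree.

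Finally, we check that $\ch'$ is $0$-skeletal and that the map is bijective. Since every north step of $\lpath(\ch')$ lies at $x\le a-e$, the top north step starts left of the diagonal, so $\ch'$ is $0$-stable by Lemma~\ref{lem:test-stable}. Part~(b) then makes $\ch'$ $0$-skeletal, because $\mvl{\lpath(\ch')}=Q^*$ is inverted Dyck. Conversely, given a $0$-skeletal $\ch'$, part~(b) says $\mvl{\lpath(\ch')}$ is a labeled inverted Dyck path, and it is nonnegative. Inverting the bijection of Theorem~\ref{thm:duality-paths}(c) yields a labeled Dyck path, which by part~(a) is $\lpath$ of a superstable $\ch$ with $\ch(i)=a-e-\ch'(i)$. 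The map $\ch\mapsto\ch'$ is an involution-type formula, so it is a bijection.
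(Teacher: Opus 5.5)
Your proposal is correct and follows the paper's argument: (a) and (b) come from combining Theorem~\ref{thm:rat-skel} with the labeled form of Theorem~\ref{thm:duality-paths}, and (c) from the observation that step reversal moves labels in column $x$ to column $a-e-x$; you flesh this out more carefully than the paper does, covering nonnegativity of $\ch'$, the identification $Q^*=\mvl{\lpath(\ch')}$, and surjectivity. One small slip: reversal acts by $p\mapsto (a-e,b)-p$, a half-turn about $((a-e)/2,b/2)$ rather than about the midpoint of the segment from $(-e,0)$ to $(a-e,b)$, although the column formula $x\mapsto a-e-x$ you derive is correct.
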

\begin{proof}
Parts~(a) and~(b) are immediate from Theorems~\ref{thm:rat-skel}
 and~\ref{thm:duality-paths}. For part~(c), it suffices to note that
 the path reversal operation from parts~(c) and~(d) of Theorem~\ref{thm:duality-paths}
 moves north step labels in column $x$ to column $a-e-x$, 
 for all $x$ in the range $0\leq x\leq a-e$. Thus a vertex with
 $x$ chips in $\ch$ becomes a vertex with $a-e-x$ chips in $\ch'$.
\end{proof}

Part~(c) of Theorem~\ref{thm:duality-configs} is simply a restatement
of part~(\ref{thm:kskel-chip:duality}) of Theorem~\ref{thm:kskel-chip},
hence the only part of Theorem~\ref{thm:kskel-chip} that remains to be
proved is part~(\ref{thm:kskel-chip:unique}), which we take care of in
\S\ref{sec:group}.

\section{Group Structure for the Quantized Rational Chip-Firing Model}
\label{sec:group}

We continue to assume the following setup: $a,b$ are coprime positive
integers; $G$ is a complete graph on $\{0\}\cup [b]$; and sink edges
have capacity $c=1$.  This section studies a natural group structure
for the quantized rational chip-firing model, which may be viewed as
an analogue of the classical critical group~\cite{biggs} (also known
as the sandpile group~\cite{Dhar}, chip-firing group~\cite{klivans},
or Jacobian~\cite{bacher1997lattice}; see~\cite{lorenzini}). It is
closely related to the Picard group. Our approach will be to first
introduce an abstract group $\Ga$ and then relate it to
chip-firing. Collectively, the lemmas below will prove
Theorem~\ref{thm:group} (see remarks following Lemma~\ref{lem:chK}).

Let $\Z_a$ be the additive group of integers modulo $a$.
Let $\ones=(1,1,\ldots,1)$ be the vector consisting of $b$ copies of $1$.
For $1\leq j\leq b$, let $\ee_j$ be the vector with a $1$ 
in position $j$ and $0$s elsewhere. Depending on context, 
we view $\ones$ and $\ee_j$ as vectors in $\Z^b$ or in $\Z_a^b$.
Let $K$ be the subgroup of $\Z^b$ generated by $a\ee_1,\ldots,a\ee_b,\ones$.
Let $H$ be the subgroup of $\Z_a^b$ generated by $\ones$.
Let $\Ga$ be the quotient group $\Z_a^b/H$, 
which can be identified with the isomorphic quotient group $\Z^b/K$.
Since $|\Z_a^b|=a^b$ and $|H|=a$, $\Ga$ is a finite commutative group
of size $a^{b-1}$.  The map $(v_1,v_2,\ldots,v_{b-1})\mapsto 
(v_1,\ldots,v_{b-1},0)+H$ is a group isomorphism $\Z_a^{b-1}\cong \Ga$,
as is readily checked. The key point is that each coset of $H$
contains a unique vector in $\Z_a^b$ with last component zero.

Given $\ch,\ch'\in\Z_a^b$, we have $\ch+H=\ch'+H$ (equality of 
cosets in $\Ga$) if and only if $\ch\equiv\ch'$ where $\equiv$
is the equivalence relation from~\S\ref{subsec:label-rat}.
As explained in that section, we can also view elements of 
$\Ga=\Z_a^b/H$ as equivalence classes of labeled paths in
$\mcLP(\N^b\E^a)_E$ under the run-preserving cyclic shift relation.
Theorem~\ref{thm:labeled-rat}(a) proved that, for each $k$ with
$0\leq k<b$, each such equivalence class contains exactly one 
$k$-skeletal representative.

Our main goal here is to prove that $\Ga$ can also be viewed as 
the set of equivalence classes of chip configurations for the
relation $\approx$ defined in~\S\ref{subsec:group-intro}.
Recall we identify a chip configuration $\ch:[b]\rightarrow\Z$ with the
vector of chip counts $(\ch(1),\ch(2),\ldots,\ch(b))$ in $\Z^b$,
and $\ch\approx\ch'$ means we can convert $\ch$ to $\ch'$ by
a finite sequence of (not necessarily legal) cluster-firing moves $\phi_S$
and borrow moves $\beta_T=\phi_T^{-1}$. 
The next lemmas prove the key fact that the equivalence relation $\approx$
coincides with congruence mod $K$.
Specifically, Lemmas~\ref{lem:eqrel1} and~\ref{lem:chK} prove part~(a)
of Theorem~\ref{thm:group}, which amounts to saying that the group
$\Z^b/{\approx}$ of firing-equivalent chip configuration classes under
pointwise addition is isomorphic to $\Z^b/K\cong\Ga\cong\Z_a^{b-1}$.

\begin{lemma}\label{lem:eqrel1}
For all $\ch,\ch'\in\Z^b$, if $\ch\approx\ch'$, then $\ch+K=\ch'+K$.
\end{lemma}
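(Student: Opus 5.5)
Since $\approx$ is generated by the moves $\phi_S$ and $\beta_S=\phi_S^{-1}$, and congruence mod $K$ is an equivalence relation, it suffices to show that a single cluster-fire move preserves the coset: $\phi_S(\ch)-\ch\in K$ for every $S\subseteq[b]$ and every $\ch\in\Z^b$. If $\phi_S(\ch)-\ch=u\in K$, then $\beta_S(\ch)-\ch$ equals $-u$ evaluated at $\beta_S(\ch)$ in place of $\ch$; this lies in $K$ because $K$ is a subgroup. An induction on the length of the sequence of moves then gives the lemma.

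The key observation is that $\phi_S$ acts by translation by a fixed vector $u_S$, independent of $\ch$. With $s=|S|$, the vector $u_S$ has entry $-E(b-s)$ in each position $i\in S$, where $E(b-s)=1+\lfloor (b-s)a/b\rfloor$. It has entry $W(s)=\lfloor sa/b\rfloor$ in each position $j\notin S$. The cases $S=\varnothing$ (where $u_S=0$) and $S=[b]$ (where $u_S=-\ones\in K$) are immediate.

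For $1\leq s<b$, I will use the identity from the proof of Lemma~\ref{lem:special-borrow}: since $sa/b\notin\Z$, we have $E(s)+W(s)=a$. Applying the same computation with $s$ replaced by $b-s$ gives $E(b-s)=a-W(b-s)$. I will instead aim to show directly that $-E(b-s)\equiv W(s)\pmod a$. Indeed,
\[
-E(b-s) = -1-\lfloor a-sa/b\rfloor = -1-a-\lfloor -sa/b\rfloor = -1-a+\lfloor sa/b\rfloor+1 = W(s)-a .
\]
Hence $u_S=W(s)\ones-a\sum_{i\in S}\ee_i$, which lies in $K$.

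The only real obstacle is this floor identity. It relies on $\gcd(a,b)=1$ to guarantee $sa/b\notin\Z$, and that fact is already established in the paper. Everything else is bookkeeping.
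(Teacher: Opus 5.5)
Your proof is correct and is essentially the paper's argument. Both reduce to a single move, observe that it is translation by $\pm\bigl(W(s)\ones - a\sum_{i\in S}\ee_i\bigr)$, and use $E(s)+W(s)=a$ for $0<s<b$; the paper works with $\beta_S$ rather than $\phi_S$, which makes no difference.

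One notational slip: the quantity $1+\lfloor (b-s)a/b\rfloor$ that you call $E(b-s)$ is the paper's $E(s)$. So the identity you need is exactly $E(s)+W(s)=a$ as already established, and your detour through $s\mapsto b-s$ is unnecessary.
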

\begin{proof}
It suffices to prove this 
assuming $\ch'=\beta_S(\ch)$ for some $S\subseteq [b]$.
If $S=\varnothing$, then $\ch'=\ch$ and the result is clear.
If $S=[b]$, then $\ch'=\ch+\ones\in \ch+K$, as needed.
Now suppose $|S|=s$ where $0<s<b$. As shown in~\S\ref{subsec:analyze-borrow},
$\ch'=\beta_S(\ch)$ has entries $\ch'_i=\ch_i+a-W(s)$ for all $i\in S$
and $\ch'_j=\ch_j-W(s)$ for all $j\in\compl{S}$, 
where $W(s)=\lfloor sa/b\rfloor=a-E(s)$.
Thus, $\ch'=\ch-W(s)\ones+\sum_{i\in S} a\ee_i\in \ch+K$, as needed.
\end{proof}

Lemma~\ref{lem:eqrel1} shows that every equivalence class of $\approx$
is a subset of some coset of $K$.

\begin{lemma}\label{lem:get-nonneg}
For all $\ch\in\Z^b$, there exists $\ch'\geq 0$ with $\ch\approx\ch'$.
\end{lemma}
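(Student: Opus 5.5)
The simplest tool is the borrow move $\beta_{[b]}$. By the formula recalled in \S\ref{subsec:analyze-borrow} with $s=b$ (equivalently, by the case $S=[b]$ in the proof of Lemma~\ref{lem:eqrel1}), it acts on any configuration by adding $\ones$. Its effect does not depend on $\ch$ and requires no legality, because $\approx$ allows arbitrary (not necessarily legal) cluster-fire and borrow moves.

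Given $\ch\in\Z^b$, I would let $m=\min_{i\in[b]}\ch(i)$. If $m\geq 0$, take $\ch'=\ch$. Otherwise set $N=-m>0$ and apply $\beta_{[b]}$ exactly $N$ times, obtaining $\ch'=\ch+N\ones$. Every entry of $\ch'$ satisfies $\ch'(i)=\ch(i)-m\geq 0$, so $\ch'\geq 0$. Since $\ch'$ is reached from $\ch$ by a finite sequence of borrow moves, $\ch\approx\ch'$ by definition of $\approx$.

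I do not expect a real obstacle. The only point to check is that $\beta_{[b]}$ adds exactly $1$ to every entry. Indeed, for $T=[b]$ the complement is empty, and each vertex of $T$ gains $c+\lfloor (b-b)a/b\rfloor=1+0=1$ chip. One could also use larger borrow moves to reach smaller nonnegative representatives, but that is unnecessary for the lemma.
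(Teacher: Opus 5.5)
Your proposal is correct and is essentially the paper's own proof. You repeatedly apply the (not necessarily legal) borrow move $\beta_{[b]}$, which adds $\ones$ to the configuration, until every entry is nonnegative. Your explicit count of $N=-\min_{i}\ch(i)$ applications just makes the paper's ``finitely many times'' precise.
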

\begin{proof}
The borrow move $\beta_{[b]}$ (where all non-sink vertices borrow)
adds $1$ to the chip count at every vertex. Given $\ch\in\Z^b$
that may have negative entries, apply $\beta_{[b]}$ to $\ch$ finitely many 
times to reach some $\ch'\geq 0$ with $\ch\approx\ch'$.
\end{proof}

\begin{lemma}\label{lem:get-ss}
For all $\ch\in\Z^b$ with $\ch\geq 0$, there exists a superstable
 $\ch_{ss}\in\Z_a^b$ with $\ch\approx\ch_{ss}$. We can convert
 $\ch$ to $\ch_{ss}$ via a finite sequence of legal cluster-fire moves.
\end{lemma}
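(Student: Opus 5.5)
This is essentially Proposition~\ref{prop:exist-kstab} applied with $k=b-1$. The plan is to start from $\ch\geq 0$ and repeatedly perform legal cluster-fire moves $\phi_S$ with $S\neq\varnothing$ (these are exactly the $(b-1)$-firing moves) until none is legal. By Proposition~\ref{prop:fin}, each such move strictly decreases the nonnegative integer $\degch{\ch}$. Legality keeps every intermediate configuration nonnegative. Hence the process terminates after finitely many moves, at a configuration $\ch_{ss}\geq 0$ in which no nonempty subset can legally fire, i.e.\ $\ch_{ss}$ is superstable.

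Since every step is a cluster-fire move, we have $\ch\approx\ch_{ss}$ by the definition of $\approx$. Moreover, the whole conversion consists of legal cluster-fire moves, which gives the second sentence of the lemma.

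It remains to check that $\ch_{ss}\in\Z_a^b$, meaning $0\leq \ch_{ss}(i)<a$ for all $i$. Nonnegativity is already known. For the upper bound I would invoke Remark~\ref{rem:basic-bound}: any vertex with at least $1+\lfloor (b-1)a/b\rfloor\leq a$ chips can fire alone, and single-vertex firings are $0$-firing moves, hence $(b-1)$-firing moves. So superstability (indeed even $0$-stability) forces $\ch_{ss}(i)<a$.

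There is no real obstacle here. The only point needing care is remembering that superstable means $(b-1)$-stable, so that single firings are included among the forbidden moves and the bound from Remark~\ref{rem:basic-bound} applies. Uniqueness of $\ch_{ss}$ is not claimed here; it will presumably follow later from the $k$-skeletal theory with $k=b-1$.
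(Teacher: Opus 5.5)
Your proposal is correct and follows the paper's proof, which simply applies Proposition~\ref{prop:exist-kstab} with $k=b-1$. Your extra check via Remark~\ref{rem:basic-bound} that superstability forces $0\leq\ch_{ss}(i)<a$, so that $\ch_{ss}\in\Z_a^b$, spells out a detail the paper leaves implicit.
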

\begin{proof}
Apply Proposition~\ref{prop:exist-kstab} taking $k=b-1$.
\end{proof}

\begin{lemma}\label{lem:kK-uniq}
Fix $k$ with $0\leq k<b$.  Every coset of $K$ contains exactly one 
$k$-skeletal representative.
\end{lemma}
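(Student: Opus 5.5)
Fix $k$ and a coset $\ch+K$ of $K$ in $\Z^b$. The plan is to reduce the statement to Theorem~\ref{thm:labeled-rat}(a) via the identification $\Z^b/K\cong\Z_a^b/H$ and the encoding of $\Z_a^b$ by labeled paths in $\mcLP(\N^b\E^a)_E$, together with Theorem~\ref{thm:rat-skel}, which says that a nonnegative configuration is $k$-skeletal exactly when its labeled path is.

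First, every $k$-skeletal configuration is in particular $k$-stable, hence (by Remark~\ref{rem:basic-bound}) satisfies $0\leq \ch(i)<a$ for all $i$, so it lies in $\Z_a^b$, viewed as the set of vectors with entries in $\{0,\ldots,a-1\}$. The reduction map $\Z_a^b\to\Z^b/K$ is surjective, and two vectors $\ch,\ch'\in\Z_a^b$ lie in the same coset of $K$ if and only if $\ch'-\ch\in K$. Since $K=a\Z^b+\Z\ones$, this holds if and only if $\ch'\equiv \ch+j\ones \pmod a$ entrywise for some $j$. That is exactly the relation $\ch\equiv\ch'$ of \S\ref{subsec:label-rat}. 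Consequently the $k$-skeletal elements of a coset of $K$ are precisely the $k$-skeletal elements of one $\equiv$-class in $\Z_a^b$.

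Next I would transport this to labeled paths. By \S\ref{subsec:label-rat}, $\lpath$ restricts to a bijection $\Z_a^b\to\mcLP(\N^b\E^a)_E$ carrying $\equiv$ on configurations to run-preserving cyclic shift on labeled paths. By Theorem~\ref{thm:rat-skel}, for $\ch\in\Z_a^b$ (which is nonnegative), $\ch$ is $k$-skeletal if and only if $\lpath(\ch)$ is $k$-skeletal. Theorem~\ref{thm:labeled-rat}(a) says each $\equiv$-class of labeled paths contains exactly one $k$-skeletal labeled path. Pulling this back through the bijection, each $\equiv$-class in $\Z_a^b$, and hence each coset of $K$, contains exactly one $k$-skeletal configuration. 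This gives existence and uniqueness together.

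The only delicate point is the identification of cosets of $K$ restricted to $\Z_a^b$ with $\equiv$-classes. One must check that a vector in $\Z_a^b$ congruent to $\ch$ modulo $K$ is literally $(\ch+j\ones)\bmod a$, computed entrywise. This is immediate from the generators of $K$. The other thing to check is that nothing outside $\Z_a^b$ can be $k$-skeletal, which is the bound from Remark~\ref{rem:basic-bound}; that bound needs the configuration to be nonnegative, and nonnegativity is part of the definition of $k$-skeletal.
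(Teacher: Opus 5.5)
Your proof is correct and follows the same route as the paper. It restricts to $\Z_a^b$ using the bound forced by $k$-stability, identifies cosets of $K$ there with $\equiv$-classes, and invokes Theorem~\ref{thm:labeled-rat}(a) through $\lpath$. The paper writes out only the uniqueness half and uses Theorem~\ref{thm:rat-skel} tacitly, whereas you make both the existence half and that translation step explicit.
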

\begin{proof}
Suppose $\ch_1,\ch_2$ are $k$-skeletal and $\ch_1+K=\ch_2+K$.
Because $\ch_1,\ch_2\in\Z_a^b$, we can also say $\ch_1+H=\ch_2+H$,
meaning $\lpath(\ch_1)\equiv\lpath(\ch_2)$. By Theorem~\ref{thm:labeled-rat}(a),
$\lpath(\ch_1)=\lpath(\ch_2)$ and so $\ch_1=\ch_2$.
\end{proof}

\begin{prop}\label{prop:ss-uniq}
Every equivalence class of $\approx$ contains exactly one 
superstable representative. In particular,
every configuration $\ch\geq 0$ has a unique superstabilization.
\end{prop}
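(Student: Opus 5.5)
Existence and uniqueness are handled separately, and then the statement about superstabilizations follows.

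\emph{Existence.} Let $C$ be an equivalence class of $\approx$ and pick any $\ch\in C$. By Lemma~\ref{lem:get-nonneg} there is $\ch'\geq 0$ with $\ch\approx\ch'$. By Lemma~\ref{lem:get-ss} there is a superstable $\ch_{ss}$ with $\ch'\approx\ch_{ss}$, so $\ch_{ss}\in C$. Superstable configurations are exactly the $(b-1)$-stable ones, and these coincide with the $(b-1)$-skeletal ones as noted in \S\ref{subsec:kskel-intro}. Hence $C$ contains a $(b-1)$-skeletal representative.

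\emph{Uniqueness.} Suppose $\ch_1,\ch_2\in C$ are both superstable. By Lemma~\ref{lem:eqrel1}, $\ch_1+K=\ch_2+K$. Both are $(b-1)$-skeletal, so Lemma~\ref{lem:kK-uniq} with $k=b-1$ forces $\ch_1=\ch_2$. Note that this step uses only the inclusion of $\approx$-classes in $K$-cosets. The converse inclusion (that each $K$-coset is a single $\approx$-class) is not needed here.

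\emph{Superstabilizations.} Let $\ch\geq 0$. Proposition~\ref{prop:exist-kstab} with $k=b-1$ gives at least one superstabilization. Any superstabilization $\ch'$ is obtained from $\ch$ by firing moves, so $\ch'\approx\ch$, and $\ch'$ is superstable. By the uniqueness just shown, all superstabilizations of $\ch$ equal the unique superstable element of the class of $\ch$.

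The argument is essentially bookkeeping, and I expect no real obstacle. The one point to check is the identification of superstable with $(b-1)$-skeletal. This holds because every nonempty borrow set has size at most $b=(b-1)+1$, so after any legal borrow $\beta_T$ the inverse firing $\phi_T$ is a legal $(b-1)$-firing move, and $\beta_T(\ch)$ is therefore not superstable.
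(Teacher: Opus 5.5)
Your proposal is correct and matches the paper's proof. Existence comes from Lemmas~\ref{lem:get-nonneg} and~\ref{lem:get-ss}; uniqueness comes from Lemma~\ref{lem:eqrel1} followed by Lemma~\ref{lem:kK-uniq} with $k=b-1$; and uniqueness of superstabilizations holds because any two superstabilizations of $\ch$ lie in the same $\approx$-class. Your remark that only the inclusion of $\approx$-classes in $K$-cosets is needed is apt, since the converse (Lemma~\ref{lem:chK}) is itself proved using this proposition.
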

\begin{proof}
The existence assertions follow from Lemmas~\ref{lem:get-nonneg}
and~\ref{lem:get-ss}. For uniqueness,
suppose $\ch_1,\ch_2$ are both superstable and $\ch_1\approx\ch_2$.
By Lemma~\ref{lem:eqrel1}, $\ch_1+K=\ch_2+K$. 
So $\ch_1=\ch_2$ by Lemma~\ref{lem:kK-uniq}.
The second uniqueness statement follows since any two superstabilizations 
of $\ch$ are in the same equivalence class of $\approx$.
\end{proof}

\begin{lemma}\label{lem:chK}
For all $\ch,\ch'\in\Z^b$, if $\ch+K=\ch'+K$, then $\ch\approx\ch'$.
\end{lemma}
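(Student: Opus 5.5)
Since $\approx$ is an equivalence relation, it suffices to show that $\ch\approx\ch+\vv$ for each generator $\vv$ of $K$ and for its negative. Indeed, if $\ch'-\ch\in K$, write $\ch'-\ch$ as an integer combination of $a\ee_1,\ldots,a\ee_b,\ones$ and add generators one at a time. The generator $\ones$ is immediate: $\beta_{[b]}$ adds $\ones$, and $\phi_{[b]}$ subtracts it. Hence $\ch\approx\ch\pm\ones$. It remains to treat $\pm a\ee_j$ for each $j\in[b]$.

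For $b=1$ the group $K$ is generated by $\ones$ alone, since $a\ee_1=a\ones$, so we may assume $b>1$. Take $S=\{j\}$, so $s=1$. The computation in the proof of Lemma~\ref{lem:eqrel1} gives
\[ \beta_{\{j\}}(\ch)=\ch-W(1)\ones+a\ee_j. \]
Applying $\beta_{[b]}$ a total of $W(1)=\lfloor a/b\rfloor$ times adds $W(1)\ones$. The composite $\beta_{[b]}^{W(1)}\circ\beta_{\{j\}}$ therefore sends $\ch$ to $\ch+a\ee_j$, so $\ch\approx\ch+a\ee_j$. The inverse composite $\phi_{\{j\}}\circ\phi_{[b]}^{W(1)}$ sends $\ch$ to $\ch-a\ee_j$. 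No legality is needed, because $\approx$ allows arbitrary moves.

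Combining these steps, every element of $K$ is a sum of moves realizing $\pm$ generators, so $\ch+K=\ch'+K$ implies $\ch\approx\ch'$. There is no real obstacle here. The only points to check are the identity $E(s)+W(s)=a$ for $0<s<b$ (shown in Lemma~\ref{lem:special-borrow}), which underlies the formula for $\beta_{\{j\}}$, and the degenerate case $b=1$.
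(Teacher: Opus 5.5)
Your proof is correct, and it takes a genuinely different route from the paper's. You work directly with the generators of $K$. Since each move acts on $\Z^b$ as translation by a vector independent of $\ch$, it suffices to realize $\ones$ and $a\ee_j$ by moves. You get $\ones$ from $\beta_{[b]}$. You get $a\ee_j$ from $\beta_{\{j\}}$ together with $W(1)$ copies of $\beta_{[b]}$, using $E(1)+W(1)=a$; this identity holds because $b>1$ and $\gcd(a,b)=1$ make $a/b$ non-integral.

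The paper instead passes through superstable representatives. It takes superstable $\ch_{ss}\approx\ch$ and $\ch_{ss}'\approx\ch'$ from Proposition~\ref{prop:ss-uniq}, and uses Lemma~\ref{lem:eqrel1} to place them in the same coset of $K$. It then concludes $\ch_{ss}=\ch_{ss}'$ from Lemma~\ref{lem:kK-uniq}, which rests on Theorem~\ref{thm:labeled-rat}(a) and hence on the whole cyclic-shift theory of skeletal paths. Your argument is purely arithmetic, and it shows that part~(\ref{thm:group:zab-1}) of Theorem~\ref{thm:group} (that $\approx$ is exactly congruence mod $K$) does not need the lattice-path combinatorics at all.

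What the paper's route buys is legality. For $\ch,\ch'\geq 0$, its chain is the legal superstabilization of $\ch$ followed by the reverse of the legal superstabilization of $\ch'$, so it uses only legal moves. The paper deduces part~(\ref{thm:group:seq}) of Theorem~\ref{thm:group} from exactly this observation. Your composites can pass through negative configurations, e.g.\ $\beta_{\{j\}}$ applied before the copies of $\beta_{[b]}$, or $\phi_{[b]}$ applied at a vertex with $0$ chips. So legality is not automatic for you, but it can be recovered by ordering the moves:
\begin{itemize}
\item do the $W(1)$ copies of $\beta_{[b]}$ before $\beta_{\{j\}}$, so that adding any generator to a nonnegative configuration is legal;
\item write $\ch'-\ch=P-N$ with $P,N$ nonnegative integer combinations of the generators;
\item climb legally from both $\ch$ and $\ch'$ to the common configuration $\ch+P=\ch'+N$, then reverse the second climb.
\end{itemize}
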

\begin{proof}
We know there is exactly one superstable
$\ch_{ss}\approx \ch$ and exactly one superstable $\ch_{ss}'\approx\ch'$.
By Lemma~\ref{lem:eqrel1}, $\ch_{ss}+K=\ch+K$ and $\ch_{ss}'+K=\ch'+K$.
Since $\ch_{ss}$ and $\ch_{ss}'$ are superstable representatives of
the same coset of $K$, they are equal (Lemma~\ref{lem:kK-uniq} 
with $k=b-1$). We deduce
$\ch\approx \ch_{ss}=\ch_{ss}'\approx\ch'$, so $\ch\approx\ch'$.
\end{proof}

The lemmas also justify Fact~(F2) from \S\ref{subsec:rat-chip-model1}
and Theorem~\ref{thm:kskel-chip}(\ref{thm:kskel-chip:unique}). It was
noted at the beginning of this section that Lemmas~\ref{lem:eqrel1}
and~\ref{lem:chK} prove part~(\ref{thm:group:zab-1}) of
Theorem~\ref{thm:group}. In addition, part~(\ref{thm:group:nonneg}) of
Theorem~\ref{thm:group} is the content of Lemma~\ref{lem:get-nonneg},
part~(\ref{thm:group:seq}) follows from Lemma~\ref{lem:chK} since all
moves used in the proof to go from $\ch$ to $\ch'$ are legal when
$\ch,\ch'\geq 0$, and part~(\ref{thm:group:kskel}) follows from
Lemma~\ref{lem:kK-uniq} in conjunction with Lemmas~\ref{lem:eqrel1}
and~\ref{lem:chK}.

Given a configuration $\ch$, we can compute the unique superstable $\ch_{ss}$
with $\ch+K=\ch_{ss}+K$ as follows. First, reduce
all entries in $\ch$ modulo $a$. Second, draw the labeled path
$Q\in\mcLP(\N^b\E^a)_E$ associated with the reduced configuration.
Third, find the unique vertex $v_0$ on $Q$ such that $\lvl(v_0)$
is minimized. The configuration $\ch_{ss}$ encoded by the cyclically shifted
path $Q_{v_0}$ is the superstable configuration we seek.
 
Similarly, for fixed $k$, we can proceed to find the $k$-skeletal
object in the coset $\ch+K$ as follows. Starting at $\ch_{ss}$,
which is $(b-1)$-stable and hence $k$-stable, look for a legal borrow
move $\beta_S$ where $S$ meets the conditions in
Lemma~\ref{lem:test-skel}.  If no such move exists, we have reached a
$k$-skeletal configuration.  Otherwise, execute that borrow move to
reach a new $k$-stable configuration and repeat the process. This
series of moves must terminate, since borrow moves increase the total
chip count (Proposition~\ref{prop:fin})
and $k$-stable configurations (being $0$-stable) 
always have fewer than $a$ chips at each vertex.

\bibliography{ratbib}{}
\bibliographystyle{alphaurl}

\end{document}